\DeclareMathOperator{\sgn}{\mathrm{sgn}}
\begin{document}
 \bibliographystyle{plain}

 \newtheorem{theorem}{Theorem}%[section]
 \newtheorem{lemma}[theorem]{Lemma}
 \newtheorem{proposition}[theorem]{Proposition}
 \newtheorem{corollary}[theorem]{Corollary}
 \theoremstyle{definition}
 \newtheorem{definition}[theorem]{Definition}
 \newtheorem{example}[theorem]{Example}
 \theoremstyle{remark}
 \newtheorem{remark}[theorem]{Remark}
 \newcommand{\mc}{\mathcal}
 \newcommand{\A}{\mc{A}}
 \newcommand{\B}{\mc{B}}
 \newcommand{\cc}{\mc{C}}
 \newcommand{\D}{\mc{D}}
 \newcommand{\E}{\mc{E}}
 \newcommand{\F}{\mc{F}}
 \newcommand{\G}{\mc{G}}
 \newcommand{\sH}{\mc{H}}
 \newcommand{\I}{\mc{I}}
 \newcommand{\J}{\mc{J}}
 \newcommand{\K}{\mc{K}}
 \newcommand{\lL}{\mc{L}}
 \newcommand{\M}{\mc{M}}
 \newcommand{\nn}{\mc{N}}
 \newcommand{\rr}{\mc{R}}
 \newcommand{\sS}{\mc{S}}
 \newcommand{\U}{\mc{U}}
 \newcommand{\X}{\mc{X}}
 \newcommand{\Y}{\mc{Y}}
 \newcommand{\C}{\mathbb{C}}
 \newcommand{\R}{\mathbb{R}}
 \newcommand{\N}{\mathbb{N}}
 \newcommand{\Q}{\mathbb{Q}}
 \newcommand{\Z}{\mathbb{Z}}
 \newcommand{\csch}{\mathrm{csch}}
 \newcommand{\tF}{\widehat{F}}
 \newcommand{\tG}{\widehat{G}}
 \newcommand{\tH}{\widehat{H}}
 \newcommand{\tf}{\widehat{f}}
 \newcommand{\ug}{\widehat{g}}
 \newcommand{\wg}{\widetilde{g}}
 \newcommand{\uh}{\widehat{h}}
 \newcommand{\wh}{\widetilde{h}}
 \newcommand{\wl}{\widetilde{l}}
 \newcommand{\tk}{\widehat{k}}
 \newcommand{\tK}{\widehat{K}}
 \newcommand{\tl}{\widehat{l}}
 \newcommand{\tL}{\widehat{L}}
 \newcommand{\tm}{\widehat{m}}
 \newcommand{\tM}{\widehat{M}}
 \newcommand{\tp}{\widehat{\varphi}}
 \newcommand{\tq}{\widehat{q}}
 \newcommand{\tT}{\widehat{T}}
 \newcommand{\tU}{\widehat{U}}
 \newcommand{\tu}{\widehat{u}}
 \newcommand{\tV}{\widehat{V}}
 \newcommand{\tv}{\widehat{v}}
 \newcommand{\tW}{\widehat{W}}
 \newcommand{\ba}{\boldsymbol{a}}
 \newcommand{\bal}{\boldsymbol{\alpha}}
 \newcommand{\bx}{\boldsymbol{x}}
 \newcommand{\p}{\varphi}
 \newcommand{\f}{\frac52}
 \newcommand{\g}{\frac32}
 \newcommand{\h}{\frac12}
 \newcommand{\hh}{\tfrac12}
 \newcommand{\ds}{\text{\rm d}s}
 \newcommand{\dt}{\text{\rm d}t}
 \newcommand{\du}{\text{\rm d}u}
 \newcommand{\dv}{\text{\rm d}v}
 \newcommand{\dw}{\text{\rm d}w}
  \newcommand{\dz}{\text{\rm d}z}
 \newcommand{\dx}{\text{\rm d}x}
 \newcommand{\dy}{\text{\rm d}y}
 \newcommand{\dl}{\text{\rm d}\lambda}
 \newcommand{\dmu}{\text{\rm d}\mu(\lambda)}
 \newcommand{\dnu}{\text{\rm d}\nu(\lambda)}
\newcommand{\dnus}{\text{\rm d}\nu_{\sigma}(\lambda)}
 \newcommand{\dlnu}{\text{\rm d}\nu_l(\lambda)}
 \newcommand{\dnnu}{\text{\rm d}\nu_n(\lambda)}
\newcommand{\sech}{\text{\rm sech}}
\newcommand{\CC}{\mathbb{C}}
\newcommand{\NN}{\mathbb{N}}
\newcommand{\RR}{\mathbb{R}}
\newcommand{\ZZ}{\mathbb{Z}}
\newcommand{\thp}{\theta^+}
\newcommand{\thpn}{\theta^+_N}
\newcommand{\vthp}{\vartheta^+}
\newcommand{\vthpn}{\vartheta^+_N}
\newcommand{\ft}[1]{\widehat{#1}}
\newcommand{\support}[1]{\mathrm{supp}(#1)}
\newcommand{\gplus}{G^+_\lambda}
\newcommand{\ph}{\mathcal{H}}
\newcommand{\godd}{G^o_\lambda}
\newcommand{\mplus}{M_\lambda^+}
\newcommand{\lplus}{L_\lambda^+}
\newcommand{\modd}{M_\lambda^o}
\newcommand{\lodd}{L_\lambda^o}
\newcommand{\sgp}{x_+^0}

 \def\today{\ifcase\month\or
  January\or February\or March\or April\or May\or June\or
  July\or August\or September\or October\or November\or December\fi
  \space\number\day, \number\year}

\title[Approximations to the truncated Gaussian]{Bandlimited approximations to the truncated Gaussian and applications}
\author[Carneiro and Littmann]{Emanuel Carneiro and  Friedrich Littmann}

\date{\today}
\subjclass[2000]{Primary 41A30, 41A52. Secondary 41A05, 41A44, 42A82}
\keywords{Truncated Gaussian, exponential type, extremal functions, onesided best approximation, tempered distributions.}

\address{IMPA - Instituto de Matem\'{a}tica Pura e Aplicada, Estrada Dona Castorina, 110, Rio de Janeiro, Brazil 22460-320.}
\email{carneiro@impa.br}
\address{Department of mathematics, North Dakota State University, Fargo, ND 58105-5075.}
\email{friedrich.littmann@ndsu.edu}

\begin{abstract} 
In this paper we extend the theory of optimal approximations of functions $f: \R \to \R$ in the $L^1(\R)$-metric by entire functions of prescribed exponential type (bandlimited functions). We solve this problem for the truncated and the odd Gaussians using explicit integral representations and properties of truncated theta functions obtained via the maximum principle for the heat operator. As applications, we recover most of the previously known examples in the literature and further extend the class of truncated and odd functions for which this extremal problem can be solved, by integration on the free parameter and the use of tempered distribution arguments. This is the counterpart of the work \cite{CLV}, where the case of even functions is treated.
\end{abstract}

\maketitle

\numberwithin{equation}{section}

\tableofcontents

\section{Introduction}%%%%%%%%%%%%%%%%%%%%%%%%%%%%%%%%%%%%%%%%%%%%%%%%%%%%%%%%%%%%%%%%%%%%%%%%%%%%%%%%%%%%%%%%%%%

An entire function $K:\C \to \C$ is of {\it exponential type} at most $2\pi \delta$ if, for every $\epsilon >0$, there exists a positive constant $C_{\epsilon}$, such that the inequality 
\begin{equation*}\label{intro0}
|K(z)| \leq C_{\epsilon} e^{(2\pi \delta + \epsilon) |z|}
\end{equation*}
holds for all $z \in \C$.  These functions are also referred to as bandlimited functions, since their distributional Fourier transforms are compactly supported in $[-\delta,\delta]$, as a consequence of the Paley-Wiener theorem.

\medskip

For a given function $f: \R \to \R$ we consider the problem of finding an entire function $K(z)$ of exponential type at most $2 \pi \delta$, such that the integral 
\begin{equation}\label{BS1}
 \int_{-\infty}^{\infty} |f(x) - K(x)|\, \dx
\end{equation}
is minimized.  Such entire function is called a best two-sided approximation. As a variant of this problem, we may impose the additional condition that $K(z)$ is real-valued on $\R$ and satisfies $f(x) \geq K(x)$ for all $x \in \R$.  In this case a function $K(z)$ that minimizes the integral (\ref{BS1}) is called an extreme minorant of $f(x)$ (or lower one-sided approximation).  Extreme majorants (upper one-sided approximations) are defined analogously.

\medskip

For the history of the one-sided approximations we refer to the survey \cite{V}. The two-sided problem was considered by Bernstein, Akhiezer, Krein, Nagy, and others, since at least 1938. In particular Krein \cite{K} in 1938 and Nagy \cite{Na} in 1939 published seminal papers solving this problem for a wide class of functions. A recent paper of Ganzburg \cite{Ga} investigates the two-sided problem following the classical method of Sz.-Nagy (cf. \cite[Chapter 7]{Sh}).

\medskip

Applications of the solutions to these problems include sieving inequalities \cite{M, S2, V}, Hilbert-type inequalities \cite{CV2, GV, Lit, MV, V}, Erd\"{o}s-Tur\'{a}n discrepancy inequalities \cite{CV2, LV, V}, optimal approximations of periodic functions by trigonometric polynomials \cite{Car, CV2, CV3, V}, Tauberian theorems \cite{GV} and, more recently, bounds for the Riemann zeta-function and its argument function on the critical line under the Riemann hypothesis \cite{CS, GG}. The extremal problem in higher dimensions, with applications, is considered in \cite{BMV, HV}. Approximations in $L^p$-norms with $p\neq 1$ are treated, for instance, in \cite{GL}. Other works on the subject include \cite{Ga2, Ga3, Lit2, Lit0, Lit3}.

\medskip

In \cite{CLV} the solution of the extremal problem (\ref{BS1}) is obtained for the Gaussian 
\begin{equation*}
G_{\lambda}(z) = e^{-\pi \lambda z^2}\,,
\end{equation*}
where $\lambda>0$ is a free parameter. This result together with tempered distribution arguments for even functions provides the solution of the extremal problem for a large class of even functions. Number-theoretical applications such as Hilbert-type inequalities related to the discrete Hardy-Littlewood-Sobolev inequality \cite[Corollary 22]{CLV} and improved bounds for the Riemann zeta-function in the critical strip under the Riemann hypothesis \cite[Theorems 1 and 2]{CC} are obtained from these results.

\medskip

In this paper we aim to build the analogous theory for truncated and odd functions.  This introduces additional difficulties due to the discontinuity at the origin. We will treat both the unrestricted $L^1(\R)$-approximation and the one-sided problem.

\medskip

The first part of the paper is devoted to the solution of the extremal problem for the truncated Gaussian $z \mapsto G_{\lambda}^{+}(z)$ defined by
\begin{align*}
G_{\lambda}^{+}(z) = 
\left\{ 
\begin{array}{cc}
G_{\lambda}(z)& {\rm for}\ \ \Re z >0,\\
1/2& {\rm for}\ \ \Re z = 0,\\
0& {\rm for}\ \ \Re z <0.
\end{array}
\right.
\end{align*}
and the odd Gaussian $z \mapsto G_{\lambda}^{o}(z)$ defined by
\begin{align*}
G_{\lambda}^{o}(z) = 
\left\{ 
\begin{array}{cc}
G_{\lambda}(z)& {\rm for}\ \ \Re z >0,\\
0& {\rm for}\ \ \Re z = 0,\\
-G_{\lambda}(z)& {\rm for}\ \ \Re z <0.
\end{array}
\right.
\end{align*}

\medskip

The second part of the paper is devoted to the integration on the free parameter $\lambda$ as a tool to generate the solution of (\ref{BS1}) for a class of truncated and odd functions. We determine the set of admissible measures for integration. The use of the Paley-Wiener theorem for distributions is classical in the unrestricted extremal problem, see for instance \cite{Ga3}, whereas in the one-sided case it has been recently used in \cite{CLV}. 

\bigskip

\section{Results}
 
Let $\tau\in\CC$ with $\Im \tau >0$. We follow the notation of Chandrasekharan \cite{chan} and define with $q= e^{\pi i\tau}$ the Jacobi theta functions 
\begin{align}
\theta_1(z,\tau) &= \sum_{n=-\infty}^\infty q^{(n+\frac{1}{2})^2} e^{(2n+1)\pi i z}\label{th1},\\
\theta_2(z,\tau) & = \sum_{n=-\infty}^\infty (-1)^n q^{n^2} e^{2\pi inz},\label{th2}\\
\theta_3(z,\tau) & = \sum_{n=-\infty}^\infty q^{n^2} e^{2\pi inz}.\label{th3}
\end{align}

Throughout this paper we define the truncation $\sgp$ by
\begin{align*}
\sgp = \tfrac12 (1+\text{sgn}(x)).
\end{align*}

Recall that the Fourier transform of the Gaussian $G_{\lambda}(x) = e^{-\pi \lambda x^2}$ is given by 
\begin{equation*}%\label{FT-Gaussian-1}
\widehat{G}_{\lambda}(t) = \int_{-\infty}^{\infty} e^{-2 \pi i t x} G_{\lambda}(x)\,\dx = \lambda^{-1/2} e^{-\pi \lambda^{-1} t^2},
\end{equation*}
and, via contour integration, the Fourier transform of the truncated Gaussian $G_{\lambda}^{+}(x) = \sgp\, e^{-\pi \lambda x^2}$ is shown to be

\begin{equation}\label{FT-Gaussian-3}
\widehat{G}_{\lambda}^{+}(t) = \frac{1}{2} \, \lambda^{-1/2} e^{-\pi \lambda^{-1} t^2} + \frac{t}{i \lambda}  \int_0^1e^{-\pi \lambda^{-1} t^2 (1-y^2)}\,\dy.
\end{equation}

\medskip

Define the following three entire functions of exponential type
\begin{align*}
K_{\lambda}^{+}(z)&=\frac{\sin\pi z}{\pi} \sum_{n=1}^\infty (-1)^{n} \left\{\frac{G_\lambda(n)}{(z-n)} -\frac{G_\lambda(n)}{z}\right\},\\
\lplus(z) &= \frac{\sin^2\pi z}{\pi^2} \sum_{n=1}^\infty \left\{\frac{G_\lambda(n)}{(z-n)^2} + \frac{G_\lambda'(n)}{z-n}-\frac{G_\lambda'(n)}{z}\right\},\\
\mplus(z) &= \frac{\sin^2\pi z}{\pi^2} \sum_{n=1}^\infty \left\{ \frac{G_\lambda(n)}{(z-n)^2} + \frac{G_\lambda'(n)}{z-n}-\frac{G_\lambda'(n)}{z}\right\} + \frac{\sin^2\pi z}{\pi^2 z^2}.
\end{align*}
Note that $K_{\lambda}^{+}$ is obtained as a function that interpolates the values of $G_{\lambda}^{+}$ at $\Z \backslash \{0\}$ with the value at $z=0$ obtained through the Poisson summation formula. In the same way, $\lplus$ and $\mplus$ interpolate the values of $G_{\lambda}^{+}$ and its derivative at $\Z \backslash \{0\}$. Note that $K_{\lambda}^{+}$ is initially defined on $\C \backslash \N$ but extends to an entire function of exponential type $\pi$, while $\lplus$ and $\mplus$, also initially defined on  $\C \backslash \N$, extend to entire functions of exponential type $2\pi$.

\medskip

The following two theorems provide the solution of the extremal problem \eqref{BS1} for the truncated Gaussian.

\begin{theorem}[Optimal two-sided approximation]\label{truncated-ba-theorem} 
The inequality
\begin{align}\label{extremal-eq}
\sin \pi x \, \big\{\gplus(x)-K_{\lambda}^{+}(x)\big\} \ge 0
\end{align}
holds for all real $x$. Let $z \mapsto K(z)$ be a function of exponential type at most $\pi$. We have
\begin{align}\label{ba-bound}
\begin{split}
\int_{-\infty}^\infty \big|\gplus(x)- K(x)\big| \, \dx \ge\, 
& \frac{1}{\pi \lambda} \int_0^1 \theta_1\left(0, i \lambda ^{-1}\big(1-y^2\big) \right) \dy\,,
\end{split}
\end{align}
with equality if and only if $K = K_{\lambda}^{+}$.
\end{theorem}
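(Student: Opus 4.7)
\emph{Strategy.} The plan is the classical Krein-type $L^1$ duality argument, with $\sgn(\sin \pi x)$ serving as the dual extremal test function. The two substantive ingredients are the pointwise sign inequality \eqref{extremal-eq} and the Fourier-analytic identity $\int_\R F(x)\sgn(\sin \pi x)\,\dx = 0$ for every $F \in L^1(\R)$ of exponential type at most $\pi$; I expect the first to be the main obstacle.

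\emph{Pointwise sign inequality.} Since $K_\lambda^+$ interpolates $G_\lambda^+$ at every nonzero integer, the difference $\Delta := G_\lambda^+ - K_\lambda^+$ vanishes on $\Z\setminus\{0\}$, so \eqref{extremal-eq} reduces to showing that on each open interval $(n,n+1)$ the sign of $\Delta$ is $(-1)^n$. My plan is to recast $\Delta$ on the half-line $x>0$ as a one-sided theta-type series, obtained by applying Poisson summation to the partial-fraction formula defining $K_\lambda^+$, and then to extract the required sign alternation from the maximum principle for the heat operator applied to this truncated theta function, in the spirit of the abstract. On $x<0$, where $G_\lambda^+ \equiv 0$, the same representation applied to $-K_\lambda^+$ yields the matching sign.

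\emph{Lower bound and explicit value.} Granting \eqref{extremal-eq}, one has $|\Delta(x)| = \Delta(x)\sgn(\sin \pi x)$ a.e., so for any competitor $K$ of exponential type at most $\pi$ with $G_\lambda^+ - K \in L^1(\R)$, the pointwise $|a|\ge a\sgn(b)$ gives
\begin{equation*}
\int_\R |G_\lambda^+(x) - K(x)|\,\dx \;\ge\; \int_\R (G_\lambda^+ - K)(x)\,\sgn(\sin \pi x)\,\dx \;=\; \int_\R |\Delta(x)|\,\dx + \int_\R (K_\lambda^+ - K)(x)\,\sgn(\sin \pi x)\,\dx.
\end{equation*}
The last integral vanishes: for $F := K_\lambda^+ - K \in L^1(\R)$ of exponential type at most $\pi$, Paley--Wiener together with continuity of $\widehat F$ forces $\operatorname{supp}(\widehat F) \subseteq [-\tfrac12,\tfrac12]$ and $\widehat F(\pm\tfrac12) = 0$; expanding $\sgn(\sin \pi x) = \tfrac{4}{\pi}\sum_{k\ge 0}(2k+1)^{-1}\sin((2k+1)\pi x)$ rewrites the integral as a linear combination of the values $\widehat F(\pm(2k+1)/2)$, every one of which vanishes. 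Applying the same expansion with $K = K_\lambda^+$ produces
\begin{equation*}
\int_\R |\Delta(x)|\,\dx = -\frac{4}{\pi}\sum_{k=0}^\infty \frac{\operatorname{Im}\,\widehat{G}_\lambda^+((2k+1)/2)}{2k+1},
\end{equation*}
into which I substitute $\operatorname{Im}\,\widehat{G}_\lambda^+(t) = -(t/\lambda)\int_0^1 e^{-\pi t^2(1-y^2)/\lambda}\,\dy$ from \eqref{FT-Gaussian-3}. Interchanging sum and integral (Fubini--Tonelli, by positivity) and identifying $2\sum_{k\ge 0}e^{-\pi(2k+1)^2(1-y^2)/(4\lambda)}$ with $\theta_1(0,i\lambda^{-1}(1-y^2))$ via \eqref{th1} reproduces the right-hand side of \eqref{ba-bound}.

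\emph{Uniqueness.} Equality throughout forces $(G_\lambda^+ - K)(x)\sin \pi x \ge 0$ a.e., and continuity of $G_\lambda^+ - K$ on $\R\setminus\{0\}$ upgrades this to $K(n) = G_\lambda(n)$ for every $n \in \Z\setminus\{0\}$. The difference $D := K - K_\lambda^+$ is then entire of exponential type at most $\pi$, integrable on $\R$, and vanishes on $\Z\setminus\{0\}$; standard rigidity (factor $D(z) = (\sin(\pi z)/\pi z) R(z)$ with $R$ entire of exponential type zero, then use Phragm\'en--Lindel\"of together with the fact that $\sin(\pi z)/z \notin L^1(\R)$) forces $D \equiv 0$, so $K = K_\lambda^+$.
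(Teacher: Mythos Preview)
Your handling of the lower bound, the explicit value, and uniqueness is correct and essentially identical to the paper's: the duality with $\sgn(\sin\pi x)$, the vanishing of $\widehat{K}$ at the half-integers via Paley--Wiener plus continuity of $\widehat{K}$, the identification of the right-hand side of \eqref{ba-bound} from \eqref{FT-Gaussian-3} and \eqref{th1}, and the interpolation-plus-integrability argument for uniqueness all match the paper line for line.

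The genuine gap is \eqref{extremal-eq}. You rightly flag it as the main obstacle, but your one-sentence plan (``Poisson summation on the partial fractions, then heat max principle'') is not a proof and is also not what actually works. The paper does \emph{not} apply Poisson summation to the series defining $K_\lambda^+$. Instead it replaces each partial-fraction term $(z-n)^{-1}G_\lambda(n)$, and the companion terms coming from $G_\lambda^+(z)$, by double-integral Laplace-type representations (Lemma~\ref{intrep} and identity \eqref{intrep2-eq}). Summing over $n$ converts the inner integrands into the truncated theta function $\thp(u,\lambda)=\sum_{n\ge 1}(-1)^{n+1}G_\lambda(u-n)$, and the needed sign for $x<0$ then reduces to the pointwise inequality $\thp(0,\lambda)\,G_\lambda(u)\ge \thp(u,\lambda)$ for $u<0$ --- this inequality \eqref{thp-ineq-eq} is where the heat-operator maximum principle enters. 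That disposes of $x<0$ in one step. For $x>0$ the argument is substantially more intricate: one decomposes $I(z)=\pi(\sin\pi z)^{-1}\big(K_\lambda^+(z)-G_\lambda^+(z)\big)$ into six double integrals $L_1,\dots,L_6$ (Lemma~\ref{IntRep-1}), regroups them as $W_1,W_2,W_3$, and bounds each separately using \eqref{thp-ineq-eq}, the relation \eqref{thp-to-th1} linking $\thp$ to $\theta_1$, and the additional theta inequality of Lemma~\ref{ineq-th-2}. None of this structure --- neither the integral representations nor the $x>0$ decomposition --- is suggested by your plan, and a bare invocation of ``maximum principle'' does not bridge the gap.
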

\medskip

\begin{theorem}[Optimal one-sided approximations]\label{thm-one-sided} The inequality
\begin{align}
\lplus(x) \le \gplus(x) \le \mplus(x)
\end{align}
holds for all real $x$.
\begin{itemize}
\item[(i)] Let $z\mapsto L(z)$ be a function of exponential type at most $2\pi$ which satisfies the inequality $L(x)\le \gplus(x)$ for all real $x$. Then
\begin{align}\label{minorant-value}
\int_{-\infty}^\infty \big\{\gplus(x) - L(x)\big\} \,\dx \ge  -\frac{\theta_3(0, i \lambda)}{2} + \frac12 + \frac{1}{2\sqrt{\lambda}}\,,
\end{align}
with equality if and only if $L = \lplus$.

\medskip

\item[(ii)] Let $z\mapsto M(z)$ be a function of exponential type at most $2\pi$ which satisfies the inequality $M(x)\ge \gplus(x) $ for all real $x$. Then
\begin{align}\label{majorant-value}
\int_{-\infty}^\infty \big\{M(x) - \gplus(x) \big\} \,\dx \ge \frac{\theta_3(0, i \lambda)}{2} + \frac12 - \frac{1}{2\sqrt{\lambda}}\,,
\end{align}
with equality if and only if $M = \mplus$.
\end{itemize}
\end{theorem}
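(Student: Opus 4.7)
The proof follows the classical Beurling--Selberg pattern: first establish the pointwise comparison $\lplus\le\gplus\le\mplus$, then evaluate the extremal $L^1$-masses via a quadrature identity, and finally use the interpolation data together with the truncation constraint at $0$ to bound the integrals of arbitrary competitors. The heart of the argument is the pointwise step. Since $\lplus$ and $\mplus$ are Hermite-type interpolants of $\gplus$ at $\Z\setminus\{0\}$, the differences $\gplus-\lplus$ and $\mplus-\gplus$ are forced to vanish to second order at every positive integer and already vanish at every negative integer by construction. I would establish the required sign on the whole real line by representing each difference as a manifestly non-negative integral against a truncated theta kernel, whose positivity is obtained from the maximum principle for the heat operator, which the abstract indicates is the machinery developed earlier in the paper. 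The one point where the truncation bites is $x=0$: direct computation from the defining series gives $\lplus(0)=0$ and $\mplus(0)=1$ (using $\sin^2\pi z/(\pi^2 z^2)\to 1$ as $z\to 0$), so that $\lplus(0)\le \gplus(0)=\tfrac12\le \mplus(0)$ as required.

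For any entire $F$ of exponential type at most $2\pi$ with $F\in L^1(\R)$, Poisson summation gives the quadrature identity
\begin{equation*}
\int_{-\infty}^\infty F(x)\,\dx = \sum_{n\in\Z} F(n).
\end{equation*}
The defining series for $\lplus$ is regular at every non-positive integer and is multiplied by $\sin^2\pi z$, so $\lplus(0)=0$ and $\lplus(-n)=0$ for $n\ge 1$, while $\lplus(n)=G_\lambda(n)$ for $n\ge 1$. Combining with the Jacobi identity $\theta_3(0,i\lambda)=1+2\sum_{n\ge 1}G_\lambda(n)$ yields $\int \lplus=\tfrac12(\theta_3(0,i\lambda)-1)$, and together with $\int \gplus=1/(2\sqrt\lambda)$ this produces the right-hand side of~\eqref{minorant-value}. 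For $\mplus$ the same computation applies except that $\mplus(0)=1$ adds one to the total, yielding~\eqref{majorant-value}. For any admissible minorant $L$ of type $2\pi$ with $L\le \gplus$ and $\int(\gplus-L)<\infty$, the quadrature identity gives $\int L=\sum_n L(n)$; continuity of $L$ combined with $\gplus\equiv 0$ on $(-\infty,0)$ forces $L(x)\le 0$ for $x\le 0$, so $L(n)\le 0=\lplus(n)$ for $n\le 0$, while $L(n)\le G_\lambda(n)=\lplus(n)$ for $n\ge 1$. Summing and subtracting from $\int \gplus$ gives~\eqref{minorant-value}. The majorant case is symmetric, with right continuity of $M$ at $0$ forcing $M(0)\ge \lim_{x\to 0^+}\gplus(x)=1=\mplus(0)$ and $M(-n)\ge 0=\mplus(-n)$ for $n\ge 1$. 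Equality in the quadrature chain forces $L(n)=\lplus(n)$ at every $n\in\Z$, and the constraint $L\le\gplus$ then forces matching derivatives at every non-zero integer; thus $L-\lplus$ is entire of type $2\pi$ with double zeros on $\Z\setminus\{0\}$ and a zero at $0$, which together with integrability on $\R$ forces $L\equiv \lplus$. The majorant equality case is analogous.

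The main obstacle is the pointwise inequality $\lplus\le\gplus\le\mplus$: one must produce non-negative integral representations of the two differences that are consistent with the jump of $\gplus$ at $0$ and with the Hermite interpolation data at $\Z\setminus\{0\}$. Once this is in hand, the rest of the argument reduces to the standard quadrature identity combined with the continuity constraints on $L(0)$ and $M(0)$ forced by the truncation, followed by the sign analysis at the non-positive integers that is absent in the even (classical Gaussian) case.
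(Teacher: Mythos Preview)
Your proposal is correct and follows essentially the same approach as the paper: the quadrature bound via Poisson summation, the continuity argument forcing $L(0)\le 0$ and $M(0)\ge 1$, and the uniqueness via matching values and derivatives at $\Z$ all match the paper's proof, and you correctly identify the pointwise inequality as the substantive step to be handled by the integral representations and heat-operator maximum-principle estimates developed earlier. One caveat: in the paper the integral representations of $\gplus-\lplus$ and $\mplus-\gplus$ are not single ``manifestly non-negative'' expressions but rather sums of six terms (the $S_k$'s and $V_k$'s) that must be carefully regrouped and bounded region by region using the truncated-theta inequalities \eqref{vartheta-ineq1}--\eqref{vartheta3}, so the actual execution of that step is considerably more delicate than your sketch suggests.
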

\medskip

The starting point of the proofs is a decomposition of these functions into integral representations analogous to those developed in \cite{CLV}. In distinction to the representations for the Gaussian, the integrands in our representation are not Jacobi theta functions. They turn out to be certain solutions of the heat equation, and the maximum principle for the heat operator is used to obtain the necessary inequalities. The uniqueness part will follow from the interpolation properties at $\Z$. A simple dilation argument provides the optimal approximations of exponential type $2\pi\delta$ for any $\delta>0$.

\medskip

Once we have established the solution of the extremal problem \eqref{BS1} for the truncated Gaussian as described in Theorems \ref{truncated-ba-theorem} and \ref{thm-one-sided}, we can easily derive the solution of this problem for the odd Gaussian  $z \mapsto G_{\lambda}^{o}(z)$. 
Observe that
\begin{align*}
G_{\lambda}^{o}(z)  = G_{\lambda}^{+}(z) - G_{\lambda}^{+}(-z)
\end{align*}
and define the entire functions
\begin{align*}
K_{\lambda}^{o}(z) & = K_{\lambda}^{+}(z) - K_{\lambda}^{+}(-z),\\
\lodd(z) &= \lplus(z) - \mplus(-z),\\
\modd(z) &= \mplus(z) - \lplus(-z).
\end{align*}
Theorem \ref{truncated-ba-theorem} and Theorem \ref{thm-one-sided} imply
\begin{align*}%\label{extremal-eq-odd}
\begin{split}
\sin&(\pi x)  \big\{G_{\lambda}^{o}(x)-K_{\lambda}^{o}(x)\big\} \\
 &=  \sin(\pi x) \big\{G_{\lambda}^{+}(x)-K_{\lambda}^{+}(x)\big\} + \sin(-\pi x)  \big\{G_{\lambda}^{+}(-x)-K_{\lambda}^{+}(-x)\big\}  \geq 0 
\end{split}
\end{align*}
and
\begin{align*}
\lodd(x) \leq \godd(x) \leq \modd(x).
\end{align*}
These functions preserve the interpolation properties at $\Z$ and are the best approximation, extremal minorant and majorant for the odd Gaussian, respectively. This follows by arguments analogous to the proofs of Theorems \ref{truncated-ba-theorem} and \ref{thm-one-sided}, and plainly guarantees the odd counterparts of all the results we present here for truncated functions.

\medskip

Having solved the extremal problem \eqref{BS1} for a family of functions with a free parameter $\lambda >0$, we are now interested in integrating this parameter against a set of admissible non-negative Borel measures $\nu$ on $[0,\infty)$ to generate a new class of truncated (and odd) functions for which \eqref{BS1} has a solution.

\medskip

We determine in Section \ref{Asy} the set of admissible measures $\nu$. The results of that section lead us to consider non-negative Borel measures $\nu$ on $[0,\infty)$ satisfying one of the conditions 
\begin{align} 
&\int_0^{\infty}  \frac{1}{1 + \sqrt{\lambda}}\, \dnu <\infty\,,\label{nu-1}\\
&\int_0^{\infty}   \dnu <\infty\,.\label{nu-2}
\end{align}

We define the truncated function $g:\R \to \R$ given by 
\begin{equation*}%\label{class}
g(x) = \sgp \, \int_0^{\infty}  e^{-\pi \lambda x^2} \, \dnu\,.
\end{equation*}

\begin{theorem}[Optimal two-sided approximation - general case]\label{thm-app}
Let $\nu$ satisfy \eqref{nu-1}. Then there exists a unique best approximation $z\mapsto k(z)$ of exponential type $\pi$ for $x\mapsto g(x)$. The function $k$ interpolates the values of $g$ at $\Z\backslash\{0\}$, satisfies
\begin{equation*}
\sin \pi x \, \{ g(x) - k(x) \} \geq 0
\end{equation*}
and 
\begin{equation*}
\int_{-\infty}^{\infty} |g(x) - k(x)| \, \dx   = \int_0^{\infty}  \frac{1}{\pi \lambda} \int_0^1 \theta_1\left(0, i \lambda ^{-1}\big(1-y^2\big) \right) \dy\,\dnu.
\end{equation*}
\end{theorem}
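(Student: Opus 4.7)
The plan is to build $k$ as a $\nu$-average of the single-parameter optima from Theorem \ref{truncated-ba-theorem}, setting
\[
k(z) := \int_0^\infty K_\lambda^+(z)\,\dnu,
\]
and verifying that exponential type, interpolation at $\Z\setminus\{0\}$, the sign inequality, the $L^1$-error, optimality, and uniqueness all descend from the pointwise theorem by integration against $\dnu$, following the strategy used in \cite{CLV} for even Gaussians. The first technical step is a uniform-in-$\lambda$ bound of the shape $|K_\lambda^+(z)| \le A(z)\,(1+\sqrt\lambda)^{-1}\,e^{\pi|\Im z|}$ with $A$ locally bounded on $\C$. Combined with \eqref{nu-1}, this produces absolute convergence of the integral on compact sets and a Paley-Wiener envelope showing that $k$ has exponential type at most $\pi$. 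Hypothesis \eqref{nu-1} is natural because the extremal error $E(\lambda)= \frac{1}{\pi\lambda}\int_0^1\theta_1(0,i\lambda^{-1}(1-y^2))\,\dy$ of Theorem \ref{truncated-ba-theorem} is asymptotically of order $(1+\sqrt\lambda)^{-1}$ (using $\theta_1(0,i\mu)\sim\mu^{-1/2}$ as $\mu\to 0^+$ and the rapid decay of $\theta_1(0,i\mu)$ as $\mu\to\infty$), so $\int E(\lambda)\,\dnu<\infty$ is equivalent to \eqref{nu-1}.

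With $k$ at hand, I transfer the three conclusions of Theorem \ref{truncated-ba-theorem}. Since $K_\lambda^+(n)=G_\lambda^+(n)$ for every $n\in\Z\setminus\{0\}$, dominated convergence gives $k(n)=g(n)$. Integrating the pointwise inequality \eqref{extremal-eq} against $\dnu$ yields $\sin\pi x\,\{g(x)-k(x)\}\ge 0$ for all real $x$, so $|g-k|=\sgn(\sin\pi x)\,\{g-k\}$ pointwise; a Fubini argument then delivers
\[
\int_{-\infty}^\infty |g(x)-k(x)|\,\dx = \int_0^\infty \int_{-\infty}^\infty |G_\lambda^+(x)-K_\lambda^+(x)|\,\dx\,\dnu,
\]
which by Theorem \ref{truncated-ba-theorem} equals the value displayed in the statement.

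For optimality and uniqueness, I would follow the Krein/Sz.-Nagy argument used in \cite{CLV}. Any competitor $K$ of exponential type at most $\pi$ with $g-K\in L^1(\R)$ satisfies
\[
\int_{-\infty}^\infty |g(x)-K(x)|\,\dx \ge \int_{-\infty}^\infty \sgn(\sin\pi x)\,\{g(x)-K(x)\}\,\dx,
\]
and the right-hand side coincides with $\int|g-k|\,\dx$ via the Paley-Wiener quadrature identity that annihilates $\int\sgn(\sin\pi x)\,\{k(x)-K(x)\}\,\dx$. Equality forces $\sin\pi x\,\{g(x)-K(x)\}\ge 0$, and in particular $K(n)=g(n)$ for $n\in\Z\setminus\{0\}$. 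The difference $F:=K-k$ is then entire of exponential type $\pi$, lies in $L^1(\R)$, and vanishes on $\Z\setminus\{0\}$; analyzing $z\,F(z)/\sin\pi z$ shows it is a bounded entire function, hence a constant, and since $\sin\pi z/z\notin L^1(\R)$ this constant must vanish, giving $K=k$.

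The main obstacle is the uniform-in-$\lambda$ estimate on $K_\lambda^+$ with the precise $(1+\sqrt\lambda)^{-1}$ dependence needed to make \eqref{nu-1} sharp. The defining series for $K_\lambda^+$ does not yield the envelope directly, so the natural path is to read the estimate off the integral representation of $G_\lambda^+-K_\lambda^+$ constructed in the proof of Theorem \ref{truncated-ba-theorem}, where the $\lambda$-dependence is transparent through the heat-equation representation mentioned there.
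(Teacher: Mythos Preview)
Your strategy---define $k$ pointwise as $\int_0^\infty K_\lambda^+\,\dnu$ and push everything through by uniform-in-$\lambda$ bounds---is \emph{not} the route the paper takes for this theorem. The paper instead packages the argument into an abstract lemma (Lemma~\ref{BADis}) about ``best approximation families'' and then verifies its hypotheses for the truncated Gaussian. The key hypothesis is a distributional Fourier identity: one must show that $\widehat{g}$, acting on Schwartz functions supported away from the origin, is given by $\int_0^\infty \widehat{G}_\lambda^+(t)\,\dnu$. This is checked via explicit bounds on $\widehat{G}_\lambda^+(t)$ (namely $|\widehat{G}_\lambda^+(t)|\le C|t|^{-1}$ uniformly in $\lambda$, and $|\widehat{G}_\lambda^+(t)|\le \tfrac12\lambda^{-1/2}+|t|\lambda^{-1}$) together with Fubini. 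The existence of $k$ as an entire function of type $\pi$ then comes out of the distributional Paley--Wiener theorem hidden inside Lemma~\ref{BADis} (whose proof is deferred to \cite{CLV}), with no need to bound $K_\lambda^+$ directly.

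The obstacle you flag is real, and it is precisely why the paper switches methods here. Your direct approach \emph{is} what the paper uses for the one-sided Theorem~\ref{thm-app-one-sided}: there one writes $l(z)=\int_0^\infty L_\lambda^+(z)\,\dnu$, pushes the integral inside the defining series, and bounds $\int_0^\infty G_\lambda(n)\,\dnu$ and $\int_0^\infty |G_\lambda'(n)|\,\dnu$ termwise. That works because the series for $L_\lambda^+$ and $M_\lambda^+$ have quadratic denominators $(z-n)^{-2}$, so the crude bound $\int G_\lambda(n)\,\dnu\le C_1+C_2/n$ already gives a convergent sum. For $K_\lambda^+$ the denominators are only linear, the convergence relies on the alternating sign $(-1)^n$, and the same termwise estimate fails. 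Extracting the bound you want from the integral representations of Section~6 may be possible, but it is not straightforward and the paper does not attempt it; the distributional route sidesteps the issue entirely. Your optimality and uniqueness arguments are fine and match the paper's (and the general lemma's) reasoning.
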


\begin{theorem}[Optimal one-sided approximations - general case]\label{thm-app-one-sided}~
\medskip
\begin{itemize}
\item [(i)] {\rm  (Extremal minorant) } Let $\nu$ satisfy \eqref{nu-1}. Then there exists a unique extremal minorant $z\mapsto l(z)$ of exponential type $2\pi$ for $x\mapsto g(x)$. The function $l$ interpolates the values of $g$ and its derivative at $\Z\backslash\{0\}$ and satisfies
\begin{equation*}
\int_{-\infty}^{\infty} \{g(x) - l(x)\}\, \dx   = \int_0^{\infty} \left\{ - \frac{\theta_3(0, i \lambda)}{2} + \frac12 + \frac{1}{2\sqrt{\lambda}}\right\}\, \dnu.
\end{equation*}
\item[(ii)] {\rm (Extremal majorant)} Let $\nu$ satisfy \eqref{nu-2}. Then there exists a unique extremal majorant $z\mapsto m(z)$ of exponential type $2\pi$ for $x\mapsto g(x)$. The function $m$ interpolates the values of $g$ and its derivative at $\Z\backslash\{0\}$ and satisfies
\begin{equation*}
\int_{-\infty}^{\infty} \{m(x) - g(x)\}\, \dx   = \int_0^{\infty} \left\{ \frac{\theta_3(0, i \lambda)}{2} + \frac12 - \frac{1}{2\sqrt{\lambda}}\right\}\, \dnu.
\end{equation*}
\end{itemize}
\end{theorem}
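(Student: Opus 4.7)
The plan is to construct $l$ and $m$ directly from the corresponding extremals of Theorem \ref{thm-one-sided} by integrating in the free parameter, namely
\begin{equation*}
l(z) := \int_0^{\infty} \lplus(z)\,\dnu, \qquad m(z) := \int_0^{\infty} \mplus(z)\,\dnu.
\end{equation*}
Once these definitions are justified, the desired pointwise inequalities, interpolation properties, and integral values will transfer from Theorem \ref{thm-one-sided} to the present setting via Fubini's theorem.

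The first task is to establish absolute convergence of the defining integrals and that $l, m$ are entire of exponential type $2\pi$. I would use uniform-in-$\lambda$ pointwise estimates for $\lplus(z)$ and $\mplus(z)$ extracted from the explicit integral representations developed for Theorem \ref{thm-one-sided}, combined with the asymptotic analysis carried out in Section \ref{Asy}. The two distinct hypotheses \eqref{nu-1} and \eqref{nu-2} correspond precisely to the asymptotic sizes of the $\lambda$-integrands in \eqref{minorant-value} and \eqref{majorant-value}: the minorant integrand is $O((1+\sqrt{\lambda})^{-1})$, whereas the majorant integrand only decays to an $O(1)$ limit as $\lambda \to \infty$, which forces $\nu$ to be finite. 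For the exponential type, I would invoke the Paley--Wiener theorem for tempered distributions: $\ft{\lplus}$ and $\ft{\mplus}$ are supported in $[-1,1]$, so the integrated transforms $\ft{l}$ and $\ft{m}$ remain supported in $[-1,1]$, forcing $l, m$ to extend to entire functions of exponential type $2\pi$. Having this in place, Fubini applied to $\lplus(x)\le\gplus(x)\le\mplus(x)$ gives $l(x)\le g(x)\le m(x)$; applied at $n\in\Z\setminus\{0\}$ it yields the interpolation of $g$ and its derivative (differentiation under the integral sign justified by the same local estimates); and combined with the $L^1$-identities of Theorem \ref{thm-one-sided} it produces the claimed values of $\int(g-l)\,\dx$ and $\int(m-g)\,\dx$.

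The main obstacle is uniqueness. Suppose $\tilde l$ is another extremal minorant of $g$ of exponential type $2\pi$; then $g-\tilde l\ge 0$ is integrable, and by Paley--Wiener $\ft{\tilde l}$ is a tempered distribution supported in $[-1,1]$. Since $l-\tilde l$ has exponential type $2\pi$ and is integrable, Poisson summation gives $\int(l-\tilde l)\,\dx = \sum_{n\in\Z}(l-\tilde l)(n)$; the left side vanishes by extremality, while each term with $n\ne 0$ satisfies $l(n)-\tilde l(n)=g(n)-\tilde l(n)\ge 0$, so all such terms vanish, yielding $\tilde l(n)=g(n)$ for $n\ne 0$ and $\tilde l(0)=l(0)$. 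The tangency $\tilde l\le g$ at each $n\ne 0$ then forces $\tilde l'(n)=g'(n)$, matching the derivative interpolation of $l$. A Hermite-type interpolation formula for type $2\pi$ functions in $L^1$ then gives $\tilde l=l$, with the residual derivative datum at $0$ pinned down by the integral identity itself. The same strategy, using the dual inequalities for the majorant and hypothesis \eqref{nu-2}, completes the majorant case.
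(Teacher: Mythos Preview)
Your overall strategy---define $l$ and $m$ by integrating $\lplus$ and $\mplus$ against $\dnu$ and then carry over the inequalities, interpolation data, integral values, and uniqueness argument from Theorem \ref{thm-one-sided}---is exactly the paper's approach, and your uniqueness argument via Poisson summation plus a Hermite-type interpolation lemma is essentially what the paper does (it cites \cite[Lemma 4]{GV} for the last step). One small point you glossed over: in the Poisson-summation step you need the $n=0$ term $l(0)-\tilde l(0)$ to be nonnegative as well, which follows because $\tilde l(x)\le g(x)=0$ for $x<0$ forces $\tilde l(0)\le 0$ by continuity, while $l(0)=\int\lplus(0)\,\dnu=0$.

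Where you and the paper diverge is in the technical verification that $l,m$ are entire of exponential type $2\pi$. You propose the Paley--Wiener/distribution route, together with pointwise estimates from the integral representations (the $S_k$, $V_k$ decompositions) and Section~\ref{Asy}. The paper explicitly avoids this here: it opens the proof by remarking that in the one-sided case ``a more straightforward approach of moving the integral inside the summation series'' suffices. Concretely, the paper works directly with the defining series for $\lplus$, derives elementary bounds
\[
\int_0^\infty G_\lambda(n)\,\dnu \le C_1+\frac{C_2}{n},\qquad \int_0^\infty |G_\lambda'(n)|\,\dnu \le \frac{C_3}{n}+\frac{C_4}{n^2},
\]
and separately checks that $\sum_{n\ge1}|G_\lambda'(n)|$ is $O(\lambda^{-1/2})$ as $\lambda\to\infty$ and $O(1)$ as $\lambda\to 0$ (via \eqref{ineq-20} and the transformation formula for $\theta_3$), hence $\nu$-integrable under \eqref{nu-1}. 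This justifies pushing $\int\dnu$ inside the series; Morera then gives analyticity, and the type is read off from the explicit factor $\sin^2\pi z$. Note also that Section~\ref{Asy} treats only the two-sided integrand $H(\lambda)$; the one-sided asymptotics you need are correct but are derived ad hoc inside this proof, not in Section~\ref{Asy}. Your distributional route would work too, but to make $l$ a tempered distribution you would still need pointwise-in-$z$, $\nu$-integrable-in-$\lambda$ control, which amounts to the same estimates---the paper's series approach just reaches them more directly than the $S_k$/$V_k$ integral representations would.
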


Let us mention one immediate application of the results obtained here to the theory of the Riemann zeta-function. When one considers the odd Gaussian $G_{\lambda}^{o}(x) = \sgn(x) e^{-\pi \lambda x^2}$ integrated against the {\it finite} measure
\begin{equation*}%\label{measure00}
\dnu = \left\{\int_0^{\infty} \frac{t}{2\sqrt{\pi \lambda^3}} \,e^{-\tfrac{t^2}{4\lambda}} \left( \frac{1}{t} \sin (\sqrt{\pi}t) - \sqrt{\pi} \cos (\sqrt{\pi} t)\right) \dt\right\}\dl\,,
\end{equation*}
one arrives at the odd function
\begin{align*}
g(x) =  \arctan\left(\frac{1}{x}\right) - \frac{x}{1+x^2}.
\end{align*}

This was observed by Carneiro, Chandee and Milinovich in \cite{CCM} and they used the extremals for this function to improve the upper bound for the argument function $S(t)$ on the critical line under the Riemann hypothesis. The previous best bound had been obtained by Goldston and Gonek \cite{GG}.

\medskip

Further applications include the odd and truncated counterparts of the applications in \cite{CLV}, we refer to that paper for the definition of measures.

\section{Integral representations for the Gaussian}%%%%%%%%%%%%%%%%%%%%%%%%%%%%%%%%%%%%%%%%%%%%%%%%%%%%%%%%%%%%%%%%%%%%%%%%%%%%%

Let $\lambda>0$ and recall that $G_\lambda(z)  = e^{-\pi\lambda z^2}$. We note that
\begin{align}\label{repginv}
G_{\lambda}(z)^{-1} = \lambda^{\frac12} \int_{-\infty}^\infty e^{-2\pi \lambda z u} \,G_{\lambda}(u) \,\du
\end{align}
for all complex $z$. In this section we collect auxiliary integral representations and estimates for the Gaussian. It was shown in \cite{CLV} that for distinct $w,z\in\CC$ and $\lambda>0$ the identity
\begin{align}\label{intrep2-eq}
\begin{split}
\frac{G_{\lambda}(z) - G_{\lambda}(w)}{z - w}
	&= 2\pi \lambda^{\frac32} \int_{-\infty}^0 \int_{-\infty}^0 e^{-2\pi\lambda tu} \, G_{\lambda}(z - t)\, G_{\lambda}(w-u)\,\du\, \dt \\
        &\qquad - 2\pi \lambda^{\frac32} \int_0^{\infty} \int_0^{\infty} e^{-2\pi\lambda tu} \, G_{\lambda}(z - t)\, G_{\lambda}(w-u) \,\du \,\dt.
\end{split}
\end{align}
is valid. We require similar representations for $(z-w)^{-1}G(w)$ and $(z - w)^{-1} G(z)$.

\begin{lemma}\label{intrep} Let $\lambda>0$, and $w,z\in\CC$.  For $\Re z<\Re w$ we have the identities
\begin{align}
\begin{split}
\frac{G_\lambda(w)}{z-w} \label{intrep-1}
&=-2\pi\lambda^{\frac32} \int_{-\infty}^\infty \int_{-\infty}^0 e^{-2\pi \lambda tu}\, G_\lambda(z-t) \, G_\lambda(w-u)\,\du\, \dt
\end{split}
\end{align}
and
\begin{align}
\begin{split}\label{gauss2}
\frac{G_{\lambda}(z)}{z - w}
	&= -2\pi \lambda^{\frac32} \int_{0}^\infty \int_{-\infty}^0 e^{-2\pi\lambda tu} \,G_{\lambda}(z - t)\,G_{\lambda}(w-u)\,\du\, \dt \\
        &\qquad - 2\pi \lambda^{\frac32} \int_0^{\infty} \int_0^{\infty} e^{-2\pi\lambda tu} \,G_{\lambda}(z - t)\,G_{\lambda}(w-u) \,\du \,\dt\,,\\
\end{split}
\end{align}
while for $\Re z>\Re w$ we have the identities
\begin{align}
\frac{G_\lambda(w)}{z-w} &= \label{intrep-2}
\displaystyle 2\pi \lambda^{\frac32}\int_{-\infty}^\infty \int_{0}^\infty e^{-2\pi \lambda tu}  \, G_\lambda(z-t)  \,  G_\lambda(w-u) \,\du \,
\end{align}
and
\begin{align}
\begin{split}
\frac{G_{\lambda}(z)}{z - w} \label{gauss3}
	&= 2\pi \lambda^{\frac32} \int_{-\infty}^0 \int_{-\infty}^0 e^{-2\pi\lambda tu} \,G_{\lambda}(z - t)\,G_{\lambda}(w-u)\,\du \,\dt \\
        &\qquad + 2\pi \lambda^{\frac32} \int_{-\infty}^0 \int_0^{\infty} e^{-2\pi\lambda tu} \,G_{\lambda}(z - t)\,G_{\lambda}(w-u) \,\du \,\dt.
\end{split}
\end{align}
\end{lemma}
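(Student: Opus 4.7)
The plan is to establish \eqref{intrep-1} and \eqref{intrep-2} by direct iterated-integral evaluation using \eqref{repginv}, and then to deduce \eqref{gauss2} and \eqref{gauss3} as algebraic consequences of \eqref{intrep2-eq} combined with \eqref{intrep-1}--\eqref{intrep-2}.

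For \eqref{intrep-1}, the natural move is to interchange the order of integration and perform the $t$-integral first. Expanding $G_\lambda(z-t) = G_\lambda(z)\,G_\lambda(t)\,e^{2\pi\lambda zt}$ reduces the inner integral to $G_\lambda(z)\int e^{2\pi\lambda t(z-u)}\,G_\lambda(t)\,\dt$, which by \eqref{repginv} (applied with argument $u-z$) equals $\lambda^{-1/2}\,G_\lambda(z)\,G_\lambda(u-z)^{-1}$. Multiplying by $G_\lambda(w-u)$ and cancelling the quadratic contributions in $u$ collapses the remaining integrand to $\lambda^{-1/2}\,G_\lambda(w)\,e^{2\pi\lambda u(w-z)}$. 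The condition $\Re z<\Re w$ is precisely what makes
\begin{equation*}
\int_{-\infty}^0 e^{2\pi\lambda u(w-z)}\,\du \;=\; [2\pi\lambda(w-z)]^{-1}
\end{equation*}
converge, and reinstating the prefactor $-2\pi\lambda^{3/2}$ yields $G_\lambda(w)/(z-w)$. The identity \eqref{intrep-2} is proved in exactly the same way, with the outer integration now over $[0,\infty)$; this converges precisely when $\Re z>\Re w$ and produces the opposite overall sign.

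For \eqref{gauss2} and \eqref{gauss3}, I would write
\begin{equation*}
\frac{G_\lambda(z)}{z-w} \;=\; \frac{G_\lambda(z)-G_\lambda(w)}{z-w} + \frac{G_\lambda(w)}{z-w},
\end{equation*}
apply \eqref{intrep2-eq} to the difference quotient and \eqref{intrep-1} or \eqref{intrep-2} to the remaining term, and decompose $\int_{-\infty}^\infty\int_{-\infty}^0 = \int_{-\infty}^0\int_{-\infty}^0 + \int_0^\infty\int_{-\infty}^0$ in the first case (respectively $\int_{-\infty}^\infty\int_0^\infty = \int_{-\infty}^0\int_0^\infty + \int_0^\infty\int_0^\infty$ in the second). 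In each case the pieces integrated over $(-\infty,0]^2$ or $[0,\infty)^2$ appear with opposite signs and cancel, leaving exactly the right-hand side of \eqref{gauss2} or \eqref{gauss3}.

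The main delicacy is the justification of the interchange of integration. The integrand $e^{-2\pi\lambda tu}\,G_\lambda(z-t)\,G_\lambda(w-u)$ fails to be absolutely integrable on all of $\R^2$, because along the direction $u=-t$ the factor $e^{-2\pi\lambda tu}$ grows like $e^{2\pi\lambda t^2}$, which is not dominated by the two Gaussians. On each of the strips that actually appear in the lemma, however, the explicit iterated evaluation above returns a finite value; and since the integrand has constant sign on $\R^2$ up to the benign factor $e^{\pi\lambda((\Im z)^2+(\Im w)^2)}$ coming from the imaginary parts of $z$ and $w$, Tonelli's theorem retroactively supplies absolute integrability on the chosen region and legitimizes the interchange.
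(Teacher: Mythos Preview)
Your proof is correct and follows essentially the same route as the paper: both establish \eqref{intrep-1} using \eqref{repginv} together with the elementary Laplace integral for $1/(z-w)$, and both obtain \eqref{gauss2} (and by symmetry \eqref{intrep-2}, \eqref{gauss3}) by adding \eqref{intrep2-eq}. The only cosmetic difference is that the paper \emph{constructs} the double integral for \eqref{intrep-1} by writing $G(z)^{-1}/(z-w)$ as a product of one-sided Laplace transforms and invoking the convolution theorem, whereas you \emph{verify} it by evaluating the iterated integral directly (doing the full-line $t$-integral first via \eqref{repginv}); your Tonelli argument for the interchange is sound and in fact makes the convergence more explicit than the paper does.
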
 

\begin{proof} To prove \eqref{intrep-1}, let $\lambda=1$ and set $G(z):=G_1(z)$. We have for $\Re z< \Re w$
\begin{align}\label{rep-zrecip}
\frac{1}{2\pi(z-w)} = -\int_{-\infty}^0 e^{-2\pi z t} \, e^{2\pi wt} \,\dt =\int_{-\infty}^\infty e^{-2\pi z t} \,h_w(t) \,\dt\,,
\end{align}
where $h_w(t) =0$ if $t>0$, and $h_w(t) =-e^{2\pi wt}$ if $t\le 0$.  From \eqref{repginv} and \eqref{rep-zrecip} it follows that the product of $G(z)^{-1}$ and $[2\pi(z-w)]^{-1}$ is the two-sided Laplace transform of the integral convolution of $G$ with $h_w$, and this Laplace transform converges absolutely for $\Re z<\Re w$. Hence we obtain 
\begin{align}\label{irep1-1}
\frac{1}{z-w} = -2\pi G(z) \int_{-\infty}^\infty e^{-2\pi zt}\int_{-\infty}^0 e^{2\pi wu} \,G(t-u) \,\du\,\dt
\end{align}
for $\Re z<\Re w$. We multiply \eqref{irep1-1} by $G(w)$ and use the identity
\[
G(z) \, G(w) \, e^{2\pi (wu-tz)} \, G(t-u) = G(z+t) \, e^{2\pi tu} \, G(w-u)
\]
to get \eqref{intrep-1} for $\lambda=1$. A change of variable gives the result for arbitrary $\lambda$. Addition of \eqref{intrep2-eq} and \eqref{intrep-1} gives \eqref{gauss2}. The identities for $\Re z > \Re w$ are shown with an analogous argument.
\end{proof}

\section{Auxiliary inequalities}\label{A}%%%%%%%%%%%%%%%%%%%%%%%%%%%%%%%%%%%%%%%%%%%%%%%%%%%%%%%%%%%%%%%%%%%%%%%%%%%%%%%
The classical Jacobi theta functions play an important role in the extremal problem for the truncated Gaussian. This section collects inequalities for these functions and related expressions that are needed in the later sections. For easy reference we list the product representations (cf.\ \cite[Chapter V, Theorem 6]{chan}) 
\begin{align}
\theta_1(z,\tau) &= q^{1/4} \,e^{\pi i z}\,\prod_{n=1}^\infty\big(1- q^{2n}\big)\prod_{n=1}^\infty\big(1+ q^{2n} e^{2\pi iz}\big) \prod_{n=1}^\infty \big(1+ q^{2n-2} e^{-2\pi iz}\big),\label{th1-prod-1}\\
\theta_2(z,\tau) &= \prod_{n=1}^\infty\big(1-q^{2n}\big)\prod_{n=1}^\infty\big(1-q^{2n-1} e^{2\pi iz}\big) \prod_{n=1}^\infty \big(1-q^{2n-1} e^{-2\pi iz}\big),\label{th2-prod}\\
\theta_3(z,\tau) &= \prod_{n=1}^\infty\big(1-q^{2n}\big)\prod_{n=1}^\infty\big(1+q^{2n-1} e^{2\pi iz}\big) \prod_{n=1}^\infty \big(1+q^{2n-1} e^{-2\pi iz}\big),\label{th3-prod}
\end{align}
and the transformation formulas (cf.\ \cite[Chapter V, Theorem 9 and Corollary 1]{chan})
\begin{align}
\lambda^{-\frac12} \theta_1\big(z,i\lambda^{-1}\big) & = \sum_{n=-\infty}^\infty (-1)^n G_\lambda(z-n),\label{th1-series}\\
\lambda^{-\frac12} \theta_2\big(z,i\lambda^{-1}\big) & = \sum_{n=-\infty}^\infty G_\lambda\big(z-n-\hh\big),\label{th2-series}\\
\lambda^{-\frac12} \theta_3\big(z,i\lambda^{-1}\big) & = \sum_{n=-\infty}^\infty G_\lambda\big(z-n\big),\label{th3-series}
\end{align}
for all $z\in\CC$. In the following we let $\theta_j'(z,\tau) = \frac{{\rm d}}{\dz}\theta_j(z,\tau)$, where $j=1,2,3$.
 
\begin{lemma}\label{theta2} 
Let $\lambda>0$.  Then $i\,\theta_2'(ix,i\lambda) >0$ for all real $x$ with $- \frac{\lambda}{2} <x<0$, and $\theta_3'(x,i\lambda)\le 0$ for all real $x$ with $0\le x\le 1/2$.
\end{lemma}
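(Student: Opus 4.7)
The plan is to prove both inequalities by logarithmic differentiation of the triple product representations \eqref{th2-prod} and \eqref{th3-prod}, evaluated with $\tau = i\lambda$ so that $q = e^{-\pi\lambda} \in (0,1)$ is real. The advantage is that each factor in the product is easy to analyze, and the resulting series for $\theta_j'/\theta_j$ naturally pair up into terms whose sign can be read off by elementary algebra.

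For the first statement, I would substitute $z = ix$ into the product for $\theta_2(z, i\lambda)$. The substitution gives $e^{\pm 2\pi i z} = e^{\mp 2\pi x}$ (both real), so all factors are real; in the range $|x| < \lambda/2$ the dominant $n=1$ factors $1 - q^{2n-1}e^{\mp 2\pi x}$ remain strictly positive (this is exactly why the constraint $x > -\lambda/2$ is sharp), and hence $\theta_2(ix, i\lambda) > 0$. Logarithmic differentiation then yields
\begin{align*}
\frac{i\,\theta_2'(ix, i\lambda)}{\theta_2(ix,i\lambda)}
= 2\pi \sum_{n=1}^\infty \left[\frac{q^{2n-1}e^{-2\pi x}}{1-q^{2n-1}e^{-2\pi x}} - \frac{q^{2n-1}e^{2\pi x}}{1-q^{2n-1}e^{2\pi x}}\right],
\end{align*}
and writing each bracket in the common form $\frac{a-b}{(1-a)(1-b)}$ with $a = q^{2n-1}e^{-2\pi x}$ and $b = q^{2n-1}e^{2\pi x}$, I observe that $a > b > 0$ for $x < 0$ while both denominators stay positive for $-\lambda/2 < x$. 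Thus every term of the series is strictly positive, giving $i\,\theta_2'(ix, i\lambda) > 0$ on the stated range.

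For the second statement, I would repeat the process with the product formula for $\theta_3(z, i\lambda)$ at real $z = x$. The positivity $\theta_3(x, i\lambda) > 0$ follows from pairing conjugate factors: $(1+q^{2n-1}e^{2\pi ix})(1+q^{2n-1}e^{-2\pi ix}) = 1 + 2q^{2n-1}\cos(2\pi x) + q^{4n-2} \ge (1-q^{2n-1})^2 > 0$. Logarithmic differentiation gives
\begin{align*}
\frac{\theta_3'(x, i\lambda)}{\theta_3(x, i\lambda)}
= 2\pi i \sum_{n=1}^\infty\left[\frac{q^{2n-1}e^{2\pi ix}}{1+q^{2n-1}e^{2\pi ix}} - \frac{q^{2n-1}e^{-2\pi ix}}{1+q^{2n-1}e^{-2\pi ix}}\right],
\end{align*}
and, with $u = q^{2n-1}e^{2\pi ix}$, each bracket equals $(u - \bar u)/|1+u|^2 = 2i\,q^{2n-1}\sin(2\pi x)/|1+u|^2$. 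Hence
\begin{align*}
\frac{\theta_3'(x, i\lambda)}{\theta_3(x, i\lambda)}
= -4\pi \sin(2\pi x)\sum_{n=1}^\infty \frac{q^{2n-1}}{|1+q^{2n-1}e^{2\pi ix}|^2},
\end{align*}
and since $\sin(2\pi x) \ge 0$ for $0 \le x \le 1/2$ the conclusion follows.

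I do not anticipate a real obstacle in this argument: the main subtlety is tracking the positivity of $\theta_2(ix, i\lambda)$ near the endpoint $x = -\lambda/2$, where the $n=1$ factor $1 - q e^{-2\pi x}$ degenerates (this is precisely the reason the interval must be open on the left). Everything else is elementary manipulation of real alternating or telescoping differences once one sets $q = e^{-\pi\lambda}$.
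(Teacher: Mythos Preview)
Your proposal is correct and essentially follows the same route as the paper: both proofs compute $\theta_j'/\theta_j$ and read off the sign term by term, and for $\theta_3$ your formula $-4\pi\sin(2\pi x)\sum_n q^{2n-1}/|1+q^{2n-1}e^{2\pi ix}|^2$ is literally the identity the paper quotes from Whittaker--Watson.

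The only minor difference is in the $\theta_2$ part. You differentiate the product \eqref{th2-prod} directly and obtain the ``partial-fraction'' series $\sum_n\big[\tfrac{a_n}{1-a_n}-\tfrac{b_n}{1-b_n}\big]$ with $a_n=q^{2n-1}e^{-2\pi x}$, $b_n=q^{2n-1}e^{2\pi x}$, whereas the paper quotes the Lambert series $\theta_2'/\theta_2=4\pi\sum_{n\ge1} q^n\sin(2\pi nz)/(1-q^{2n})$ and substitutes $z=ix$ to get $i\theta_2'(ix,i\lambda)=-2\pi\theta_2(ix,i\lambda)\sum_n \sinh(2\pi nx)/\sinh(\pi n\lambda)$. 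These are two standard rearrangements of the same logarithmic derivative; your version has the advantage of being self-contained from the product formula already stated in the paper and of making transparent why the constraint $x>-\lambda/2$ is exactly what keeps the $n=1$ denominator $1-a_1$ positive, while the paper's version packages the sign into a single $\sinh$ factor. Either way the positivity of $\theta_2(ix,i\lambda)$ on that strip (which you correctly check from the product) is needed to conclude.
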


\begin{proof} The series representation (cf.\ \cite[page 489]{ww})
\begin{align*}
\frac{\theta_2'(z,\tau)}{\theta_2(z,\tau)} = 4\pi  \sum_{n=1}^\infty \frac{q^n \sin 2\pi nz}{1-q^{2n}} \qquad \Big(|\Im z|<  \frac{\Im\tau}{2}\Big),
\end{align*}
gives with $z=ix$, $\tau=i\lambda$ and $\sin(2\pi nz) = i\sinh(2\pi nx)$ 
\begin{align*}
i\theta_2'(ix,i\lambda) = -2 \pi\,\theta_2(ix,i\lambda) \sum_{n=1}^\infty \frac{\sinh(2\pi nx)}{\sinh(\pi n\lambda)} \qquad(|x|<\lambda/2).
\end{align*}
Since for $-\lambda/2<x\le 0$ the function $x\mapsto \theta_2(ix,i\lambda)$ is positive (from \eqref{th2-prod}), the first inequality of the lemma follows. The inequality for $\theta_3'$ follows similarly from the representation
\[
\frac{\theta_3'(z,\tau)}{\theta_3(z,\tau)} = -4\pi\sin(2\pi z)\sum_{n=1}^\infty \frac{q^{2n-1}}{1+2q^{2n-1} \cos(2\pi z) + q^{4n-2}}.
\]
\end{proof}

\begin{lemma}\label{ineqTheta-1}
Let $\lambda>0$. For $0\le x< 1/2$ we have
\begin{align}\label{l1-th1}
\frac{\theta_1\big(x,i\lambda^{-1}\big)}{\theta_1\big(0,i\lambda ^{-1}\big)} \le  G_{\lambda}(x).
\end{align}
\end{lemma}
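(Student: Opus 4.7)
The plan is to transfer the inequality to a statement about $\theta_2$ that is then resolved by Lemma~\ref{theta2}. The bridge is the identity
\begin{equation*}
e^{\pi\lambda x^2}\,\theta_1(x, i\lambda^{-1}) \,=\, \lambda^{\h}\,\theta_2(i\lambda x, i\lambda)\qquad (x\in\R,\ \lambda>0),
\end{equation*}
which one obtains by applying \eqref{th2-series} with $\lambda$ replaced by $\lambda^{-1}$, so that $\lambda^{\h}\theta_2(z,i\lambda) = \sum_{n\in\Z} e^{-\pi(z-n-\h)^2/\lambda}$, then setting $z=i\lambda x$, expanding the square $(i\lambda x-n-\h)^2 = -\lambda^2x^2 - 2i\lambda x(n+\h) + (n+\h)^2$, and recognizing the resulting series as $e^{\pi\lambda x^2}$ times the defining series \eqref{th1} of $\theta_1(x, i\lambda^{-1})$ (with nome $q = e^{-\pi/\lambda}$).

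From the series \eqref{th2}, the reindexing $n\mapsto -n$ shows that $\theta_2(z,\tau)$ is even in $z$, so $y\mapsto \theta_2(iy, i\lambda)$ is an even real-valued function on $\R$. Lemma~\ref{theta2} tells us that $\tfrac{\text{\rm d}}{\text{\rm d}y}\theta_2(iy, i\lambda) = i\,\theta_2'(iy, i\lambda) > 0$ on $(-\lambda/2, 0)$; combined with the evenness just observed, this means $y\mapsto \theta_2(iy, i\lambda)$ is strictly increasing on $(-\lambda/2, 0)$ and strictly decreasing on $(0, \lambda/2)$, and therefore attains its maximum on $[-\lambda/2, \lambda/2]$ at $y=0$.

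For $x\in[0, \h)$ we have $\lambda x\in [0, \lambda/2)$, so applying this maximum property with $y=\lambda x$ gives $\theta_2(i\lambda x, i\lambda) \leq \theta_2(0, i\lambda)$. Combining this with the bridging identity evaluated at $x$ and at $x=0$,
\begin{equation*}
e^{\pi\lambda x^2}\,\theta_1(x, i\lambda^{-1}) \,=\, \lambda^{\h}\,\theta_2(i\lambda x, i\lambda) \,\leq\, \lambda^{\h}\,\theta_2(0, i\lambda) \,=\, \theta_1(0, i\lambda^{-1}),
\end{equation*}
and dividing by the positive quantity $e^{\pi\lambda x^2}\,\theta_1(0, i\lambda^{-1})$ yields \eqref{l1-th1}. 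The main obstacle is purely algebraic, namely correctly deriving the bridging identity that recasts the Gaussian-weighted $\theta_1$ at real argument as $\theta_2$ at imaginary argument; once that is in hand, the essential analytic input comes entirely from Lemma~\ref{theta2}.
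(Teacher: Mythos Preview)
Your proof is correct and follows essentially the same route as the paper's: both establish the bridging identity $G_\lambda(x)^{-1}\theta_1(x,i\lambda^{-1}) = \lambda^{1/2}\theta_2(\pm i\lambda x,i\lambda)$ (the sign is immaterial by the evenness you note), and then use Lemma~\ref{theta2} to conclude that $\theta_2(i\lambda x,i\lambda)$ is maximized at $x=0$ on the relevant range. The only cosmetic difference is that the paper derives the identity from \eqref{th2} and \eqref{th1-series} and works with $-i\lambda x$ directly, whereas you obtain it from \eqref{th2-series} (with $\lambda\mapsto\lambda^{-1}$) and invoke evenness to pass from the interval $(-\lambda/2,0)$ in Lemma~\ref{theta2} to $(0,\lambda/2)$.
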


\begin{proof} The definition  \eqref{th2} and the transformation formula \eqref{th1-series} give
\begin{align}\label{th1toth2}
\lambda^{-\frac{1}{2}} \theta_1\big(x,i\lambda^{-1}\big) = G_{\lambda}(x) \, \theta_2(-i\lambda x, i\lambda).
\end{align}
In particular $\lambda^{-\frac{1}{2}} \theta_1\big(0,i\lambda^{-1}\big) = \theta_2\big(0, i\lambda\big)$ which is positive. Lemma \ref{theta2} implies that
\begin{align*}
\frac{{\rm d}}{\dx}\theta_2(-i\lambda x,i\lambda) = -i\,\lambda\,\theta_2'(-i\lambda x,i\lambda) <0\,,
\end{align*}
and hence
\begin{align}
\theta_2(-i\lambda x,i\lambda) \le \theta_2(0,i\lambda)\label{derivative}
\end{align}
for $0\le x<1/2$. Identity \eqref{th1toth2} and inequality \eqref{derivative} imply \eqref{l1-th1}.
\end{proof}

\begin{lemma} \label{ineq-th-2}
Let $\lambda >0$. For $x >0$ we have
\begin{align}\label{needed-for-l2l3}
\int_0^\infty e^{-2\pi \lambda xt} \big\{\theta_1\big(t,i\lambda^{-1}\big) - \theta_1\big(0,i\lambda^{-1}\big) G_\lambda(t)\big\}\,\dt < 0.
\end{align}
\end{lemma}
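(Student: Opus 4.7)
My plan is to reduce the claim to an alternating series of positive quantities and argue that the series has the sign of its leading (negative) term. First I apply the transformation formula \eqref{th1-series} to write $\theta_1(t,i\lambda^{-1}) = \lambda^{1/2}\sum_{n\in\Z}(-1)^n G_\lambda(t-n)$, together with $\theta_1(0,i\lambda^{-1}) = \lambda^{1/2}\sum_{n\in\Z}(-1)^n G_\lambda(n)$; the integrand becomes $\lambda^{1/2}\sum_n(-1)^n\{G_\lambda(t-n) - G_\lambda(n)G_\lambda(t)\}$. Since $x>0$, the Laplace damping allows interchange of sum and integral. Setting
\[
\phi(y) := \int_0^\infty e^{-2\pi\lambda yu}G_\lambda(u)\,du = e^{\pi\lambda y^2}\int_y^\infty e^{-\pi\lambda s^2}\,ds,
\]
a completion of the square yields $\int_0^\infty e^{-2\pi\lambda xt}G_\lambda(t-n)\,dt = G_\lambda(n)\,\phi(x-n)$, so the Laplace transform in question equals
\[
\lambda^{1/2}\sum_{n\in\Z}(-1)^n G_\lambda(n)\{\phi(x-n)-\phi(x)\} \;=\; \lambda^{1/2}\sum_{n\ge 1}(-1)^n G_\lambda(n)\,\Delta_n(x),
\]
where $\Delta_n(x) := \phi(x+n)+\phi(x-n)-2\phi(x)$ and the second equality uses pairing $n\leftrightarrow -n$ (with $G_\lambda$ even and the $n=0$ term vanishing).

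Next I would observe that each $\Delta_n(x)$ is strictly positive. Expanding $\cosh(2\pi\lambda nu) = 1 + 2\sinh^2(\pi\lambda nu)$ inside the $u$-integral representation of $\phi$ gives the useful identity
\[
\Delta_n(x) \;=\; 4\int_0^\infty e^{-\pi\lambda u^2 - 2\pi\lambda xu}\sinh^2(\pi\lambda nu)\,du,
\]
which is manifestly positive. Thus the desired inequality reduces to an alternating sum of positive quantities with negative leading term, and one must show the sum inherits the sign of that term.

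The main obstacle lies in this last step. A direct Leibniz argument would require monotone decrease of $G_\lambda(n)\Delta_n(x)$ in $n$, which is straightforward for $n$ large: the super-exponential Gaussian decay $G_\lambda(n) = e^{-\pi\lambda n^2}$ beats the at-most-exponential growth of $\Delta_n(x)$, forcing the ratio to be $\le e^{-2\pi\lambda x}<1$ eventually. The delicate regime is small $n$ together with small $\lambda$. To handle this I would interchange sum and integral once more to get
\[
\sum_{n\ge 1}(-1)^n G_\lambda(n)\,\Delta_n(x) \;=\; 4\int_0^\infty e^{-\pi\lambda u^2 - 2\pi\lambda xu}\,S(u)\,du,
\]
where $S(u) := \sum_{n\ge 1}(-1)^n G_\lambda(n)\sinh^2(\pi\lambda nu)$. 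A short calculation (expand $\sinh^2$, apply the transformation formula) identifies
\[
S(u) \;=\; \frac{1}{4\sqrt\lambda}\bigl\{e^{\pi\lambda u^2}\theta_1(u,i\lambda^{-1}) - \theta_1(0,i\lambda^{-1})\bigr\}.
\]
Lemma \ref{ineqTheta-1} yields $S(u)\le 0$ on $[0,\hh)$, supplying the desired negative contribution near $u=0$. The positive contributions from intervals $(k+\hh,k+\tfrac{3}{2})$, $k\ge 1$, where $e^{\pi\lambda u^2}\theta_1(u,i\lambda^{-1})$ can exceed $\theta_1(0,i\lambda^{-1})$, are controlled by the damping factor $e^{-\pi\lambda u^2 - 2\pi\lambda xu}$ for $x>0$; making this domination quantitative by splitting the integral at half-integers and exploiting the anti-periodicity $\theta_1(u+1,i\lambda^{-1}) = -\theta_1(u,i\lambda^{-1})$ to telescope contributions from adjacent intervals is the core technical step.
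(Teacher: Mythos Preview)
Your detour through $\Delta_n$ and $S(u)$ is circular. Multiply out $e^{-\pi\lambda u^2}S(u)$ using your own formula for $S(u)$: you get
\[
4\int_0^\infty e^{-\pi\lambda u^2 - 2\pi\lambda xu}\,S(u)\,\du \;=\; \lambda^{-1/2}\int_0^\infty e^{-2\pi\lambda xu}\big\{\theta_1(u,i\lambda^{-1}) - \theta_1(0,i\lambda^{-1})\,G_\lambda(u)\big\}\,\du,
\]
which is $\lambda^{-1/2}$ times the very integral you set out to estimate. Since the original integral was already asserted to equal $\lambda^{1/2}\sum_{n\ge 1}(-1)^nG_\lambda(n)\Delta_n(x)$, this is just the tautology $I=I$. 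The alternating-sum apparatus therefore carries no information, and what you call the ``core technical step'' deferred to the last sentence is in fact the entire proof.

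That step is also easier than you suggest: there is no competition between positive and negative contributions to dominate. Split the original integral at $t=\hh$. On $[0,\hh]$, Lemma~\ref{ineqTheta-1} gives the integrand $\le 0$ pointwise (and not identically zero), yielding a strictly negative contribution. On $[\hh,\infty)$ the term $-\theta_1(0,i\lambda^{-1})G_\lambda(t)$ is already negative, and for the $\theta_1$ piece the anti-periodicity $\theta_1(t+1,\tau)=-\theta_1(t,\tau)$ plus a geometric series collapse gives
\[
\int_{1/2}^\infty e^{-2\pi\lambda xt}\,\theta_1(t,i\lambda^{-1})\,\dt \;=\; \frac{1}{e^{\pi\lambda x}+e^{-\pi\lambda x}}\int_0^1 e^{-2\pi\lambda xt}\,\theta_1\big(t+\tfrac12,i\lambda^{-1}\big)\,\dt \;\le\; 0,
\]
since $\theta_1(\cdot,i\lambda^{-1})\le 0$ on $[\hh,\tfrac{3}{2}]$. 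Both pieces are separately non-positive; no Leibniz monotonicity or quantitative domination is needed. This is exactly the paper's argument, reached directly in three lines rather than after a round trip.
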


\begin{proof} By Lemma \ref{ineqTheta-1} we obtain  
\begin{align}\label{0to12}
\int_0^{1/2} e^{-2\pi\lambda xt}  \big\{\theta_1\big(t,i\lambda^{-1}\big) - \theta_1\big(0,i\lambda^{-1}\big) G_\lambda(t)\big\}\,\dt \le0\,,
\end{align}
and the integrand is not identically zero, so the inequality is strict. The  identity $\theta_1(z+1,\tau) = -\theta_1(z,\tau)$  implies
\begin{align}\label{12toinfty}
\begin{split}
\int_{1/2}^\infty e^{-2\pi \lambda xt} \, \theta_1\big(t,i\lambda^{-1}\big)\,\dt &= \sum_{n=0}^\infty (-1)^n \int_{0}^{1} e^{-2\pi \lambda x(t+\frac12+n)}\,  \theta_1\big(t+\tfrac12,i\lambda^{-1}\big)\,\dt\\
&= \frac{1}{\big(e^{\pi\lambda x}+e^{-\pi\lambda x}\big)}\int_0^1  e^{-2\pi \lambda x t } \, \theta_1\big(t+\tfrac12,i\lambda^{-1}\big)\,\dt\,,
\end{split}
\end{align}
and since $\theta_1\big(t,i\lambda^{-1}\big) \le 0$ for $\tfrac12\le t\le \tfrac32$, we see that \eqref{0to12} and \eqref{12toinfty} imply \eqref{needed-for-l2l3}.
\end{proof}

The remaining part of this section is devoted to the proof of the following three inequalities.

\begin{proposition} For $t>0$ we have
\begin{align}
\sum_{n=1}^\infty (-1)^{n+1} n^2 e^{-tn^2} &\ge 0,\label{partialq}\\
\sum_{n=0}^\infty e^{-tn^2} \big(1-2tn^2\big)&\ge \frac12,\label{ineq-20}\\
\sum_{n=1}^\infty e^{-tn^2} \big(tn^3- n\big)&\ge 0.\label{ineq-31}
\end{align}
\end{proposition}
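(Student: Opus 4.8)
For \eqref{partialq} I would convert the sum into a derivative of a theta constant. Writing $q=e^{-t}$ one has $\theta_2(0,it/\pi)=\sum_{n\in\Z}(-1)^ne^{-tn^2}$, so that
$\sum_{n=1}^\infty(-1)^{n+1}n^2e^{-tn^2}=\tfrac12\,\frac{{\rm d}}{{\rm d}t}\,\theta_2(0,it/\pi)$, and \eqref{partialq} becomes the assertion that $t\mapsto\theta_2(0,it/\pi)$ is non‑decreasing on $(0,\infty)$. This I would read off the product representation \eqref{th2-prod} evaluated at $z=0$: $\theta_2(0,it/\pi)=\prod_{n\ge1}\big(1-e^{-2nt}\big)\prod_{n\ge1}\big(1-e^{-(2n-1)t}\big)^2$ is a product of strictly positive functions, each of the form $1-e^{-kt}$ with $k\ge1$, and each such factor is increasing in $t$; after taking logarithms the product becomes a series of increasing functions, hence is increasing, and \eqref{partialq} follows.

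For \eqref{ineq-20} the key elementary identity is $e^{-tn^2}(1-2tn^2)=2\sqrt t\,\frac{{\rm d}}{{\rm d}t}\big(\sqrt t\,e^{-tn^2}\big)$, which gives $\sum_{n\ge0}e^{-tn^2}(1-2tn^2)=2\sqrt t\,\frac{{\rm d}}{{\rm d}t}\big[\sqrt t\sum_{n\ge0}e^{-tn^2}\big]=\tfrac12+\sqrt t\,\frac{{\rm d}}{{\rm d}t}\big[\sqrt t\,\theta_3(0,it/\pi)\big]$, using $\sum_{n\ge0}e^{-tn^2}=\tfrac12\big(1+\theta_3(0,it/\pi)\big)$. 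Now the transformation formula \eqref{th3-series} at $z=0$ (with $\lambda=\pi/t$) yields $\sqrt t\,\theta_3(0,it/\pi)=\sqrt\pi\sum_{n\in\Z}e^{-\pi^2n^2/t}$, and differentiating this series term by term produces only non‑negative terms; explicitly one lands on $\sum_{n\ge0}e^{-tn^2}(1-2tn^2)=\tfrac12+2\pi^{5/2}t^{-3/2}\sum_{n\ge1}n^2e^{-\pi^2n^2/t}\ge\tfrac12$.

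The inequality \eqref{ineq-31} is where I expect the real difficulty to be, since the relevant quantity $\phi(t):=\sum_{n\ge1}ne^{-tn^2}$ is "half" of a $\Z$-sum that vanishes by parity, so no modular transformation collapses the computation as it did above. Setting $\psi(t):=\sum_{n\ge1}n^3e^{-tn^2}=-\phi'(t)$, the statement \eqref{ineq-31} is exactly $t\psi(t)\ge\phi(t)$, i.e.\ that $t\mapsto t\,\phi(t)$ is non‑increasing on $(0,\infty)$. For $t\ge1$ this is immediate, because then every summand $(tn^3-n)e^{-tn^2}=n(tn^2-1)e^{-tn^2}$ is already $\ge0$. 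For $0<t<1$ I would argue by comparison with an integral: the function $f_t(x)=(tx^3-x)e^{-tx^2}$ has the elementary primitive $-\tfrac12x^2e^{-tx^2}$, so $\int_0^\infty f_t=0$ and $\int_{1/2}^\infty f_t=\tfrac18e^{-t/4}$, and therefore $\sum_{n\ge1}f_t(n)=\tfrac18e^{-t/4}-\sum_{n\ge1}\big(\int_{n-1/2}^{n+1/2}f_t\,\dx-f_t(n)\big)=\tfrac18e^{-t/4}-\tfrac1{24}\sum_{n\ge1}f_t''(\xi_n)$ for suitable $\xi_n\in[n-\tfrac12,n+\tfrac12]$. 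The problem is thus reduced to showing $\sum_{n\ge1}f_t''(\xi_n)\le3e^{-t/4}$, where $f_t''(x)=2tx\big(2t^2x^4-9tx^2+6\big)e^{-tx^2}$ has sign changes only at the two points with $tx^2$ equal to a root of $2u^2-9u+6$; one estimates this sum by $\int_{1/2}^\infty f_t''=-f_t'(1/2)=\big(1-\tfrac54t+\tfrac18t^2\big)e^{-t/4}$ plus an error controlled by $\|f_t'''\|_{L^1(1/2,\infty)}$. Making that error estimate genuinely quantitative over the full interval $0<t<1$ — presumably by splitting off a range of small $t$, where the controlled oscillation of $f_t''$ makes the bound easy, and treating an intermediate window by direct estimates on finitely many summands — is the main obstacle and the part requiring care.
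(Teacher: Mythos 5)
Your proofs of \eqref{partialq} and \eqref{ineq-20} are correct and are essentially the paper's own arguments (monotonicity of the product form of $\theta_2(0,\cdot)$, and differentiating the $\theta_3$ functional equation), differing only in the parametrization of the theta constant; nothing to flag there.

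The gap is in \eqref{ineq-31}, and you yourself flag it. Reducing to $(t\phi(t))'\le 0$ with $\phi(t)=\sum_{n\ge1}ne^{-tn^2}$ and handling $t\ge1$ termwise is fine, and the identity $\int_{1/2}^\infty f_t\,\dx=\tfrac18e^{-t/4}$ with $f_t(x)=(tx^3-x)e^{-tx^2}$ is correct. But the midpoint-rule step then produces $\sum_{n\ge1}f_t(n)=\tfrac18e^{-t/4}-\tfrac1{24}\sum_{n\ge1}f_t''(\xi_n)$ with $\xi_n$ depending on $n$ and $t$ in an uncontrolled way, and the heuristic replacement $\sum_n f_t''(\xi_n)\approx\int_{1/2}^\infty f_t''\,\dx=-f_t'(1/2)$ is not justified: the correction is of the same order as $\|f_t'''\|_{L^1(1/2,\infty)}$, and you have not bounded that uniformly in $0<t<1$, nor is it clear that such a bound closes the argument. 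Since the whole content of \eqref{ineq-31} lives precisely in this small-$t$ regime (both sides blow up like $1/(2t)$ and one is comparing sub-leading terms), an argument that leaves the error term unquantified is not a proof. The paper takes a different and fully quantitative route: it writes the relevant quantities as $N_j(t)=\int_0^\infty u^je^{-tu^2}\,\du-\sum_{n\ge0}n^je^{-tn^2}$, integrates by parts against the periodic Bernoulli function $B$, expands $B$ in its Fourier series (uniform boundedness of partial sums handles the limit interchange), evaluates the resulting Fourier coefficients in closed form via $\int_0^\infty u^je^{-u^2}\sin(2xu)\,\du$ expressed through Dawson's integral $D$, and then proves the rational lower bounds $D(x)\ge\tfrac1{2x}$ for $x\ge1$ and $D(x)\ge\tfrac{x^2-1}{x(2x^2-3)}$ for $x\ge2$ to deduce $N_1(t)\ge0$ and $N_3(t)\le0$ for $0<t<1$; chaining these gives $\sum_{n\ge1}ne^{-tn^2}\le\int_0^\infty ue^{-tu^2}\,\du=t\int_0^\infty u^3e^{-tu^2}\,\du\le t\sum_{n\ge1}n^3e^{-tn^2}$. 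So the conceptual difference is that the paper turns the lattice-vs-integral discrepancy into explicit, sign-definite Fourier data rather than into a remainder that still has to be estimated. If you want to salvage your approach, you would need an actual quantitative bound on $\sum_n\!\big(\int_{n-1/2}^{n+1/2}f_t-f_t(n)\big)$, uniform in $0<t<1$, with constant strictly below $\tfrac18e^{-t/4}$; the paper's Fourier-series-of-$B$ computation is in effect the clean way to do exactly this.
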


\begin{proof}[Proof of \eqref{partialq}] Consider
\begin{align*}%\label{periodic-sol}
\theta_2(0,it) = 1+ 2 \sum_{n=1}^\infty (-1)^n e^{-\pi tn^2}
\end{align*}
for positive $t$. The product formula \eqref{th2-prod} implies
\begin{align}\label{monotone}
\begin{split}
\theta_2(0,it)&= \prod_{n=1}^\infty\big(1-e^{-2\pi nt}\big)\prod_{n=1}^\infty\big(1-e^{-(2n-1)\pi t}\big)^2.
\end{split}
\end{align}
Every factor in the products of \eqref{monotone} is a positive, monotonically increasing function of $t$ for $t>0$, hence $\partial_t \theta_2(0,it)\ge 0$ which implies \eqref{partialq}.
\end{proof}

\begin{proof}[Proof of \eqref{ineq-20}] Differentiation of the functional equation (from \eqref{th3-series})
\begin{align*}%\label{fueq-theta}
\pi^{\frac12}\sum_{n=-\infty}^\infty e^{-\frac{\pi^2 n^2}{t}} = t^{\frac12} \sum_{n=-\infty}^\infty e^{-tn^2},
\end{align*}
gives
\begin{align*}%\label{fueq-theta-1}
\begin{split}
\pi^{\frac52} t^{-2} \sum_{n=-\infty}^\infty n^2 e^{-\frac{\pi^2 n^2}{t}} &= \frac{1}{2t^{\frac12}}  \left(1 +2\sum_{n=1}^\infty e^{-tn^2} (1-2tn^2)\right),
\end{split}
\end{align*}
and the left-hand side is non-negative, which gives \eqref{ineq-20}.
\end{proof}

The proof of \eqref{ineq-31} for $0<t<1$ is surprisingly involved, since there does not appear to be an identity analogous to the functional equation for $\sum_n |n| e^{-tn^2}$. We define \textsl{Dawson's integral} $x\mapsto D(x)$ by
\begin{align*}
D(x) =  \int_0^x e^{(u^2-x^2)}\, \du\,.
\end{align*}

We require lower bounds by rational functions and an integral evaluation involving $D$.

\begin{lemma} We have
\begin{align}\label{dawson1}
D(x)\ge \frac{1}{2x} \ \ \text{ for }\  \ x\ge 1\,,
\end{align}
and
\begin{align}\label{dawson2}
D(x)\ge \frac{x^2-1}{x(2x^2-3)} \ \ \text{ for }\ \  x\ge 2.
\end{align}
\end{lemma}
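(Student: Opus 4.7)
The plan is to prove each inequality by exhibiting a monotonically increasing auxiliary function that measures the gap between $\int_0^x e^{u^2}\,\du$ and the proposed rational lower bound multiplied by $e^{x^2}$, then verifying the claim at the stated endpoint.

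For \eqref{dawson1}, I would set $\psi(x) := \int_0^x e^{u^2}\,\du - e^{x^2}/(2x) = e^{x^2}\bigl(D(x) - \tfrac{1}{2x}\bigr)$. A direct differentiation gives $\psi'(x) = e^{x^2}/(2x^2) > 0$, so $\psi$ is strictly increasing on $(0,\infty)$ and \eqref{dawson1} reduces to the single claim $\psi(1) \ge 0$, i.e.\ $\int_0^1 e^{u^2}\,\du \ge e/2$. Integrating the Taylor series $e^{u^2} = \sum_{n\ge 0} u^{2n}/n!$ term by term yields the lower bound $\int_0^1 e^{u^2}\,\du \ge 1 + \tfrac13 + \tfrac{1}{10} = \tfrac{43}{30}$, and comparison with a crude upper estimate such as $e < \tfrac{43}{15}$ closes that case.

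For \eqref{dawson2}, I would proceed analogously with $\gamma(x) := \int_0^x e^{u^2}\,\du - e^{x^2}(x^2-1)/[x(2x^2-3)]$. The central computation — and the cleanest point of the argument — is that after expanding the derivative of the rational factor, the polynomial numerator telescopes to the constant $3$, yielding
\[
\gamma'(x) \;=\; \frac{3\,e^{x^2}}{x^2(2x^2-3)^2}\,,
\]
which is strictly positive for $x > \sqrt{3/2}$. Hence $\gamma$ is strictly increasing on $[2,\infty)$, so \eqref{dawson2} reduces to $\gamma(2) \ge 0$, equivalently $D(2) \ge 3/10$ or $\int_0^2 e^{u^2}\,\du \ge \tfrac{3 e^4}{10}$.

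The main obstacle is this final endpoint check, since the two sides agree numerically to within roughly $0.1$. A rigorous verification requires summing about ten terms of the positive-term power series $\int_0^2 e^{u^2}\,\du = \sum_{n\ge 0} \tfrac{2\cdot 4^n}{(2n+1)\,n!}$ as a lower bound, paired with a compatible upper bound on $e^4 = \sum_{n\ge 0} 4^n/n!$ obtained by a geometric tail estimate from a suitable cutoff. Once this delicate inequality at $x=2$ is established, the monotonicity of $\psi$ and $\gamma$ delivers both parts of the lemma.
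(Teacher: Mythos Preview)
Your proposal is correct and follows essentially the same approach as the paper: the paper defines the identical auxiliary functions, observes their derivatives are positive (asserting the analogous argument for the second without displaying your explicit formula $\gamma'(x)=3e^{x^2}/[x^2(2x^2-3)^2]$), and reduces to the endpoint checks $D(1)>\tfrac12$ and $D(2)>\tfrac{3}{10}$. The only difference is that the paper simply asserts these two numerical inequalities, whereas you outline a power-series verification.
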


\begin{proof} We define $x\mapsto g(x)$ for $x>0$ by
\[
g(x) = \int_0^x e^{u^2} \du - \frac{e^{x^2}}{2x}.
\]
Since $D(1) >\frac12$ and $g'(x) = (2x^2)^{-1} \exp(x^2)>0$ for all positive $x$, we obtain $g(x)\geq0$ for $x\ge1$ and hence \eqref{dawson1}. The inequality $D(2)> 3/10$ and differentiation of
\[
x\mapsto  \int_0^x e^{u^2} \du - e^{x^2} \frac{x^2-1}{x(2x^2-3)}
\]
gives \eqref{dawson2} with an analogous argument.
\end{proof}

\begin{lemma} \label{hypergeom-id}
We have for real $x$ and $j=0,2,4$
\begin{align*}
\begin{split}
\int_0^\infty u^j e^{-u^2} \sin(2xu) \,\du  = \begin{cases}
 D(x)&\text{ if }j=0,\\
 \\
\frac12 x+ D(x) \big(\frac12 - x^2\big)&\text{ if }j=2,\\
\\
\frac54 x -\frac12 x^3 + D(x) \big( \frac34 - 3x^2 +x^4\big)&\text{ if }j=4.
\end{cases}
\end{split}
\end{align*}
\end{lemma}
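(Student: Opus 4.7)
The plan is to set $I_j(x) := \int_0^\infty u^j e^{-u^2}\sin(2xu)\,\du$ and handle the three evaluations uniformly via the ODE that characterizes Dawson's integral. Since the integrand decays rapidly, differentiation twice in $x$ under the integral sign is justified, and combining this with $\partial_x^2\sin(2xu) = -4u^2\sin(2xu)$ yields the recursion
\[
I_{j+2}(x) = -\tfrac{1}{4}\, I_j''(x), \qquad j \ge 0,
\]
which reduces all three cases to computing $I_0$ and taking derivatives.

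For the base case $j=0$, I would begin from $I_0(0) = 0$ and differentiate under the integral to obtain $I_0'(x) = \int_0^\infty 2u e^{-u^2}\cos(2xu)\,\du$. A single integration by parts (using the antiderivative $-e^{-u^2}$ of $2u e^{-u^2}$ and differentiating $\cos(2xu)$) produces the first-order linear ODE
\[
I_0'(x) + 2x\, I_0(x) = 1, \qquad I_0(0) = 0.
\]
Since $D(x) = e^{-x^2}\int_0^x e^{t^2}\,\dt$ satisfies the same initial value problem (equivalently, $D'(x) = 1 - 2x D(x)$), uniqueness of solutions yields $I_0(x) = D(x)$.

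For $j=2$ and $j=4$, I would then compute $I_2 = -\tfrac14 D''$ and $I_4 = -\tfrac1{16}D^{(4)}$ directly via the recursion, repeatedly substituting the Dawson ODE $D' = 1 - 2xD$ at each differentiation step to eliminate higher derivatives of $D$ in favor of $D$ itself. For instance, $D'' = -2D - 2xD' = -2D - 2x + 4x^2 D$ immediately delivers the claimed formula for $I_2$, and an analogous (slightly longer) manipulation for $I_4$ — most efficiently done by differentiating the already-computed $I_2$ twice and reapplying the ODE — recovers the $I_4$ formula.

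The main obstacle is purely algebraic bookkeeping in the last step: after each differentiation one must collect coefficients of $1$, $x$, $x^3$ and of $D(x)\cdot (1,x^2,x^4)$, and consistently reduce every occurrence of $D'$ via the ODE so that the final expression matches the stated form. There is no analytical difficulty since absolute integrability of $u^j e^{-u^2}$ against any trigonometric factor justifies all the differentiations under the integral sign.
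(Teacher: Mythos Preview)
Your argument is correct and complete; the ODE characterization of $D$ together with the recursion $I_{j+2}=-\tfrac14 I_j''$ cleanly yields all three formulas, and the algebra you sketch checks out.

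The paper proceeds differently. For $j=0$ it evaluates the integral by writing $\sin(2xu)=(e^{2ixu}-e^{-2ixu})/2i$, completing the square, and shifting the contour of integration to obtain $e^{-x^2}\int_0^x e^{u^2}\,\du=D(x)$. For the passage $j\mapsto j+2$ it appeals to integration by parts in $u$ (writing $u^{j+2}e^{-u^2}=-\tfrac12 u^{j+1}\cdot\frac{d}{du}e^{-u^2}$) rather than to differentiation in $x$. Your route is arguably more elementary: it stays entirely on the real line, avoids any contour argument, and the recursion via $\partial_x^2$ is immediate from $\partial_x^2\sin(2xu)=-4u^2\sin(2xu)$. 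The paper's contour computation, on the other hand, gives $I_0=D$ in one stroke without needing to invoke ODE uniqueness. Either way the bookkeeping for $I_2$ and $I_4$ is the same in spirit---repeated use of $D'=1-2xD$---so the two approaches converge after the base case.
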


\begin{proof}
The case $j=0$ follows by contour integration
\begin{align*}
\int_0^\infty e^{-u^2} \sin(2xu) \,\du & = \int_0^\infty e^{-u^2} \left( \frac{e^{2 i x u} - e^{-2 i x u}}{2i}\right)\,\du\\
& = e^{-x^2} \int_0^{\infty}\frac{e^{-(u-ix)^2}}{2i}\,\du \ - \ e^{-x^2} \int_0^{\infty}\frac{e^{-(u+ix)^2}}{2i}\,\du \\
& = e^{-x^2} \int_0^x e^{u^2} \du = D(x).
\end{align*}
The steps $j \mapsto j+2$ follow by repeated integration by parts.
\end{proof}

\begin{proof}[Proof of \eqref{ineq-31}] Inequality \eqref{ineq-31} is trivially true for $t\ge 1$. In order to prove it for $0<t<1$ we define the  periodic Bernoulli function $u\mapsto B(u)$ by
\begin{align*}
B(u) = u - [u]-\frac{1}{2}\,,
\end{align*}
where $[u]$ is the greatest integer $\le u$. We define furthermore for $j\in\NN$ and $t>0$
\begin{align}\label{def-ni}
t\mapsto N_j(t) = \int_{0^-}^\infty u^j e^{-tu^2} \textrm{d}B(u)
\end{align}
and note that
\begin{align}\label{ni-to-separate}
N_j(t) = \int_0^\infty u^j e^{-tu^2} \du \ - \ \sum_{n=0}^\infty n^j e^{-tn^2}.
\end{align}
We perform an integration by parts in \eqref{def-ni} and replace $B$ by its Fourier series expansion. Since the partial sums of the Fourier series of $B$ are uniformly bounded \cite[Vol I, page 61]{Z}, Lebesgue dominated convergence followed by an application of Lemma \ref{hypergeom-id} gives, for $j\ge 1$, with $x_{n,t} = t^{-\frac12} n\pi$
\begin{align*}
\begin{split}
N_j(t) &= -\int_0^\infty B(u) \, e^{-tu^2} \, u^{j-1} \, \big(j-2tu^2\big) \,\du\\
&=\sum_{n=1}^\infty \frac{1}{\pi n} \int_0^\infty   e^{-tu^2} \,u^{j-1} \,\big(j-2tu^2\big)\,\sin(2\pi nu) \,\du\\
&=\begin{cases}
\displaystyle t^{-1} \sum_{n=1}^\infty \{2 x_{n,t} D(x_{n,t}) - 1\}&(j=1),\\
\displaystyle t^{-2} \sum_{n=1}^\infty \left\{x_{n,t}^2 -1+x_{n,t}\big(3-2x_{n,t}^2\big)D(x_{n,t}) \right\}&(j=3).
\end{cases}
\end{split}
\end{align*}
Since $x_{n,t} = t^{-\frac12} \pi n\ge \pi$ for $0<t<1$ and $n\ge 1$, inequality \eqref{dawson1} implies that $N_1(t)\ge 0$ and inequality \eqref{dawson2} implies that $N_3(t) \le 0$  for all $0<t<1$. Inserting this into \eqref{ni-to-separate} gives for $0<t<1$
\begin{align*}
\sum_{n=1}^\infty n e^{-tn^2} \le \int_0^\infty ue^{-tu^2} \,\du = t\int_{0}^\infty u^3 e^{-tu^2} \,\du \le t \sum_{n=1}^\infty n^3 e^{-tn^2}
\end{align*}
which finishes the proof of \eqref{ineq-31}.
\end{proof}

\section{Estimates for truncated theta series}%%%%%%%%%%%%%%%%%%%%%%%%%%%%%%%%%%%%%%%%%%%%%%%%%%%%%%%%%%%%%%%%%%%%%%%%%

Let $\lambda >0$. We will be working with the truncations $z\mapsto \thp(z,\lambda)$ and $z\mapsto \vartheta^{+}(z,\lambda)$  defined by
\begin{align}\label{vth+Intro}
\begin{split}
\thp(z,\lambda)& = \sum_{n=1}^\infty (-1)^{n+1} G_\lambda(z-n)\\ 
\vartheta^{+}(z,\lambda)& =  2\pi \lambda \sum_{n=1}^\infty (n-z) G_\lambda(n-z).
\end{split}
\end{align}
We denote their partial sums by $\thpn$ and $\vthpn$, respectively.  By \eqref{th1-series} and \eqref{th3-series} note that
\begin{align}
-\lambda^{-\frac12} \theta_1\big(z,i\lambda^{-1}\big) &= \thp(z,\lambda) + \thp(-z,\lambda) - G_\lambda(z),\label{thp-to-th1} \\
\lambda^{-\frac12} \theta'_3\big(z,i\lambda^{-1}\big) &= \vthp(z,\lambda) - \vthp(-z,\lambda) + G'_\lambda(z).\label{vthp-to-th3}
\end{align}
For easy reference we collect some growth estimates. The proofs are straightforward, and we omit them.

\begin{lemma}\label{ub-gauss}
Let $\lambda>0$. Then
\begin{align}
|\thpn(u,\lambda)| & \le 2 G_\lambda(u-1) & (u\le 1), \label{thp-growth1}\\
|\thpn(u,\lambda)| & = \mathcal{O}(1)&(u\ge 1),\label{thp-growth2}
\end{align}
where the implied constant is independent of $N$. Moreover, for $u\le 0$
\begin{align}\label{II-theta-N-ub}
0\le \vthp_N(u,\lambda) \le \vthp(u,\lambda) \le c_\lambda (|u|+1) G_\lambda(1-u)
\end{align}
holds with $c_\lambda = 2\pi \lambda \,G_\lambda(1)^{-1} \sum_{n\ge 1} nG_\lambda(n)$ independent of $u$.
\end{lemma}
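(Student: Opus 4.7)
For \eqref{thp-growth1}, the plan is to apply the alternating-series estimate. For $u\le 1$ and $n\ge 1$ the values $|u-n|=n-u$ are non-decreasing in $n$, so the non-negative sequence $n\mapsto G_\lambda(u-n)$ is non-increasing. The usual bound for partial sums of an alternating series with monotonically decreasing terms then gives $|\thpn(u,\lambda)|\le G_\lambda(u-1)$, which is even sharper than the stated bound (the factor of $2$ is harmless slack).

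For \eqref{thp-growth2}, I would dominate $|\thpn|$ by the full Jacobi theta series: the triangle inequality and the transformation formula \eqref{th3-series} yield
\begin{equation*}
|\thpn(u,\lambda)|\,\le\,\sum_{n=1}^\infty G_\lambda(u-n)\,\le\,\sum_{n=-\infty}^\infty G_\lambda(u-n)\,=\,\lambda^{-\h}\theta_3\big(u,i\lambda^{-1}\big).
\end{equation*}
The right-hand side is $1$-periodic and continuous in $u$, hence bounded on $\RR$ with a bound independent of $N$ and of $u$ in the region $u\ge 1$.

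For \eqref{II-theta-N-ub}, the monotonicity $0\le \vthpn(u,\lambda)\le \vthp(u,\lambda)$ when $u\le 0$ is immediate because every summand $(n-u)G_\lambda(n-u)$ is non-negative. For the pointwise estimate, I would set $v=-u\ge 0$ and exploit the algebraic identity
\begin{equation*}
(n+v)^2-(n^2-1)-(1+v)^2\,=\,2v(n-1)\,\ge\,0\qquad(n\ge 1,\,v\ge 0),
\end{equation*}
which yields $G_\lambda(n+v)\le G_\lambda(1+v)\,G_\lambda(n)/G_\lambda(1)$. Combined with the trivial bound $n+v\le n(1+v)$, valid for $n\ge 1$ and $v\ge 0$, this factors $(1+v)\,G_\lambda(1+v)=(|u|+1)\,G_\lambda(1-u)$ out of every term of $\vthp(u,\lambda)=2\pi\lambda\sum_{n\ge 1}(n+v)G_\lambda(n+v)$ and leaves behind precisely the Gaussian moment sum defining $c_\lambda$.

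The only subtlety is this final step: a cruder factorization such as $G_\lambda(n+v)\le G_\lambda(n)G_\lambda(v)$ (from $(n+v)^2\ge n^2+v^2$) would produce $G_\lambda(|u|)$ rather than the sharper $G_\lambda(1-u)$, so the specific identity above is essential to match the lemma's statement with the stated constant $c_\lambda$.
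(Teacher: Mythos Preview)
Your argument is correct in all three parts. The alternating-series bound for \eqref{thp-growth1} (yielding the sharper constant $1$), the domination by the periodic $\theta_3$-sum for \eqref{thp-growth2}, and the algebraic factorization $(n+v)^2\ge (n^2-1)+(1+v)^2$ together with $n+v\le n(1+v)$ for \eqref{II-theta-N-ub} all check out and reproduce the stated constant $c_\lambda$ exactly. The paper itself omits the proof as ``straightforward,'' and your approach is precisely the kind of direct elementary estimate one would expect here; there is nothing to compare against.
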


We develop estimates for truncated theta functions that will be crucial for the proofs of the main theorems. We accomplish this via the maximum principle for the heat operator, primarily a tool in partial differential equations. We denote by $\partial_x f$ the partial derivative of $f$ with respect to $x$ and by $L$  the partial differential operator acting on $(x,t)\mapsto f(x,t)$ given by
\begin{align*}
f\mapsto L[f] = \partial_{xx}f - \partial_t f.
\end{align*}
For an open rectangle $E$ we let
\begin{align*}
C^{2,1}(E) = \{f \in C^1(E)\,|\, \partial_{xx}f(x,t)\in C(E)\}.
\end{align*}
The maximum principle for the heat operator can be found for instance in \cite[Chapter 3]{PW}. It essentially states that for a function $f(x,t)$ with $L[f]\ge 0$, knowledge of $f$ on the boundary of a rectangle implies information about $f$ inside the rectangle.

\begin{lemma}[Maximum principle for the heat equation]\label{maxprinc} 
Let $\ell>0$ and $T>0$. Consider the open rectangle $E = (-\ell,0)\times (0,T)$. Assume that $f \in C^{2,1}((-\infty,0)\times (0,\infty))$ and 
\begin{align*}
L[f]\ge 0
\end{align*}
on $E$. Let $F$ be the closed set given by the union of bottom, left side, and right side of the closure of $E$ (i.e. $F = \{-\ell\} \times [0,T] \cup [-\ell,0] \times \{0\} \cup \{0\} \times [0,T]$). If
\begin{align*}
\limsup_{(x,t)\in E\to (x_0,t_0)\in F} f(x,t) \le M
\end{align*}
for all $(x_0,t_0)\in F$, then
\begin{align*}
f(x,t)\le M
\end{align*}
for all $(x,t)\in E$.
\end{lemma}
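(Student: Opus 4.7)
The plan is to follow the classical parabolic maximum principle argument, with care taken because $f$ need not extend continuously to the parabolic boundary $F$. For a small parameter $\varepsilon > 0$ I will introduce the perturbation
\begin{align*}
g_\varepsilon(x,t) = f(x,t) - \varepsilon t,
\end{align*}
which satisfies $L[g_\varepsilon] = L[f] + \varepsilon \geq \varepsilon > 0$ on $E$. The point of this perturbation is that strict positivity of $L[g_\varepsilon]$ forbids $g_\varepsilon$ from having a maximum at an interior point of $E$ (where $\partial_{xx} g_\varepsilon \leq 0$ and $\partial_t g_\varepsilon = 0$) and also forbids a maximum on the open top edge $(-\ell,0) \times \{T\}$ (where $\partial_{xx} g_\varepsilon \leq 0$ by the second derivative test in $x$ and $\partial_t g_\varepsilon \geq 0$ by a one-sided difference quotient in $t$), since each of these would force $L[g_\varepsilon] \leq 0$.

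Since $f$ is only smooth on the open set $(-\infty,0) \times (0,\infty)$ and is controlled on $F$ only in the $\limsup$ sense, I cannot directly assert that $g_\varepsilon$ attains its supremum on $\overline{E}$. To avoid this, I will shrink the rectangle slightly: for $\delta > 0$, set
\begin{align*}
E_\delta = (-\ell + \delta,\,-\delta) \times (\delta,\,T),
\end{align*}
whose closure sits compactly inside the domain of $f$. Then $g_\varepsilon$ is continuous on $\overline{E_\delta}$, attains its maximum there, and by the previous paragraph this maximum must lie on the parabolic boundary $F_\delta$ of $E_\delta$.

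The remaining step is a compactness argument that transfers the boundary hypothesis from $F$ to $F_\delta$. Fix $\eta > 0$; for each $(x_0,t_0) \in F$ the $\limsup$ hypothesis furnishes an open neighborhood of $(x_0,t_0)$ in which $f < M + \eta$ inside $E$. Extracting a finite subcover of the compact set $F$ produces an open set $V \supseteq F$ with $f < M + \eta$ on $V \cap E$, and for $\delta$ sufficiently small $F_\delta \subset V \cap E$. Hence $g_\varepsilon \leq M + \eta$ on $F_\delta$, which by the preceding maximum argument propagates to all of $\overline{E_\delta}$. Letting $\delta \to 0$, then $\varepsilon \to 0$, then $\eta \to 0$ yields $f \leq M$ throughout $E$.

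The main obstacle is not the maximum-principle mechanics themselves, which become routine once a compact subset is fixed; it is the careful handling of the $\limsup$ boundary data by using compactness of $F$ to produce a single open neighborhood on which $f < M + \eta$. The role of the perturbation $-\varepsilon t$ is to upgrade the weak inequality $L[f] \geq 0$ to the strict inequality $L[g_\varepsilon] > 0$ needed to rule out interior and top-edge maxima.
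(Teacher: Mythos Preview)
Your proof is correct and is precisely the ``approximation from inside'' argument the paper invokes (the paper itself only cites Protter--Weinberger for the classical case with $f$ continuous on $\overline{E}$ and remarks that the $\limsup$ version follows by approximating from inside). One small point worth making explicit: the parabolic boundary $F_\delta$ of your shrunken rectangle contains the two top corners $(-\ell+\delta,T)$ and $(-\delta,T)$, which lie outside $E$; but since $f$ is continuous at these points and $f<M+\eta$ on $V\cap E$ up to the top, continuity gives $f\le M+\eta$ there as well, so the argument goes through unchanged.
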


\begin{proof} We refer to \cite[Chapter 3]{PW}. The lemma is shown there under the stronger assumption that $f$ is continuous on the closure of $E$; the statement given here follows with an approximation from inside.
\end{proof}

Throughout this section we will be working with the function $(x,t) \mapsto k(x,t)$ defined for $(x,t) \in \R \times (0,\infty)$ by
\begin{align}\label{def-k}
k(x,t) = t^{-1/2} e^{-\frac{x^2}{4t}}.
\end{align}
Note that $k$ solves the heat equation $L[k]=0$ at every $(x,t)\in \RR\times (0,\infty)$.

\begin{proposition}
Let $x<0$ and $\lambda>0$. Then
\begin{align}
 \thp(0,\lambda) \,G_\lambda(x) - \thp(x,\lambda) &\geq 0,\label{thp-ineq-eq}\\
\vthp(0,\lambda) \,G_\lambda(x) - \vthp(x,\lambda) &\ge 0, \label{vartheta-ineq1}\\
\vthp(0,\lambda) \, G_\lambda(x) - \vthp(x,\lambda) &\le \frac{G_\lambda'(x)}{2}. \label{vartheta2}
\end{align}
Moreover, for $0\le x\le 1/2$ and $\lambda>0$
\begin{align}\label{vartheta3}
\vthp(0,\lambda) G_\lambda(x)\le \vthp(0,\lambda) \le \vthp(x,\lambda).
\end{align}
\end{proposition}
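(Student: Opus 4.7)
The plan is to apply the maximum principle for the heat operator (Lemma \ref{maxprinc}) after the change of variable $t = (4\pi \lambda)^{-1}$, which turns $G_\lambda(y) = e^{-y^2/(4t)} = t^{1/2} k(y, t)$ into a quantity built from the heat kernel $k(x, t)$ of \eqref{def-k}. Under this correspondence, $t^{-1/2}\thp(x, \lambda) = \sum_{n \geq 1}(-1)^{n+1} k(x - n, t)$ and $t^{-1/2}\vthp(x, \lambda) = \sum_{n \geq 1} \partial_x k(x - n, t)$ are solutions of the heat equation $L[\cdot] = 0$, while the bare Gaussian $h(x, t) := e^{-x^2/(4t)} = \sqrt{t}\, k(x, t)$ satisfies $L[h] = -h/(2t)$, producing forcing terms whose sign is certified by the series inequalities of Section \ref{A}.

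For \eqref{thp-ineq-eq} I would set $g_1(x, t) = \thp(0, \lambda)\, k(x, t) - t^{-1/2}\thp(x, \lambda)$ on the rectangle $E = (-\ell, 0) \times (0, T)$; a direct computation gives $L[g_1] = -k(x, t)\, \partial_t \thp(0, \lambda)$, and \eqref{partialq} (with $s = 1/(4t)$) yields $\partial_t \thp(0, \lambda) \geq 0$, so $L[-g_1] \geq 0$. On the parabolic boundary, $g_1(0, t) = 0$, $g_1(x, 0^+) \to 0$ by exponential decay, and $g_1(-\ell, t) \geq 0$ for $\ell$ sufficiently large by the bound $|\thp(x, \lambda)| \leq 2G_\lambda(x - 1)$ of Lemma \ref{ub-gauss}. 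Lemma \ref{maxprinc} applied to $-g_1$ yields $g_1 \geq 0$, and multiplying by $\sqrt{t}$ recovers \eqref{thp-ineq-eq}. The same template proves \eqref{vartheta-ineq1} with $\vthp$ in place of $\thp$: the role of \eqref{partialq} is taken by \eqref{ineq-31}, which secures $\partial_t \vthp(0, \lambda) \geq 0$, and the boundary at $x = -\ell$ is handled by \eqref{II-theta-N-ub}.

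The remaining two inequalities are more delicate. For \eqref{vartheta2}, the quantity $u(x, t) := \vthp(x, \lambda) - \vthp(0, \lambda) G_\lambda(x) + G_\lambda'(x)/2$ satisfies
\[
L[u] + \frac{1}{2t}\, u = G_\lambda(x)\, \partial_t \vthp(0, \lambda) \geq 0,
\]
a zeroth-order perturbation whose coefficient $(2t)^{-1}$ has the wrong sign for Lemma \ref{maxprinc}. The integrating factor $u = \sqrt{t}\, v$ absorbs the bad term into a standard heat-operator inequality $L[v] = k(x, t)\, \partial_t \vthp(0, \lambda) \geq 0$, after which the sign of $u$ is forced by a Hopf-type derivative check at $x = 0$: inequality \eqref{ineq-20}, in its equivalent form $\vthp'(0, \lambda) \leq \pi\lambda$, certifies that $\partial_x u(0, t) \leq 0$, so $u$ cannot dip below $0$ as $x$ crosses zero leftward. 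For \eqref{vartheta3}, the first half is immediate from $G_\lambda \leq 1$ and $\vthp(0, \lambda) > 0$; the second half follows by the same template on $(0, 1/2) \times (0, T)$ applied to $g_4 = \vthp(x, \lambda) - \vthp(0, \lambda)$, with the endpoint $x = 1/2$ reduced via the identity \eqref{vthp-to-th3}, the vanishing $\theta_3'(1/2, i\lambda^{-1}) = 0$ (a consequence of the product formula \eqref{th3-prod} together with the even symmetry of $\theta_2$), and \eqref{vartheta2} applied at $x = -1/2$.

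The main obstacle is the wrong-sign zeroth-order term $(2t)^{-1}I$ appearing in the heat computation for \eqref{vartheta2} and \eqref{vartheta3}, which precludes a direct appeal to Lemma \ref{maxprinc}. Absorbing it with the integrating factor $\sqrt{t}$ yields only a sub-parabolic inequality that on its own gives upper bounds, so the lower bound must be pinned down by a Hopf-type argument at $x = 0$ powered by \eqref{ineq-20}. Routing the $x = 1/2$ endpoint for \eqref{vartheta3} cleanly through the theta functional equation \eqref{vthp-to-th3} is the remaining bookkeeping task.
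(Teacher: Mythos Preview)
Your treatment of \eqref{thp-ineq-eq} and \eqref{vartheta-ineq1} is essentially the paper's argument in different notation: the functions $g_1$, $g_2$ you write down coincide with the paper's $-f$ after unwinding the substitution $\lambda=(4\pi t)^{-1}$, and the appeals to \eqref{partialq} and \eqref{ineq-31} match exactly.

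The gap is in \eqref{vartheta2}. After the substitution $u=\sqrt{t}\,v$ you correctly obtain $L[v]=k(x,t)\,\partial_t\vthp(0,\lambda)\geq 0$, but this is a \emph{sub}-parabolic inequality, so Lemma~\ref{maxprinc} yields only $v\leq\sup_{\partial_p}v$, an upper bound. What you need is $v\geq 0$, the opposite direction, and there is no minimum principle for sub-solutions. Your proposed repair via a Hopf boundary-point lemma does not close this: Hopf says that if $L[v]\geq 0$ and $v$ attains its maximum at a lateral boundary point with $v$ strictly smaller nearby, then the outward normal derivative is strictly positive there. Combined with your observation $\partial_x v(0,t)\leq 0$ (which is indeed equivalent to \eqref{ineq-20}), this would force $v\equiv 0$ on any domain where the max principle applied---but $v$ is manifestly not identically zero, since $u$ contains the nonvanishing term $G_\lambda'(x)/2$. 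The resolution is that the parabolic boundary condition fails at the corner $(0,0)$: along $x=-\sqrt{t}$, for instance, the term $t^{-1/2}G_\lambda'(x)/2=\tfrac12 k_x(x,t)$ blows up like $t^{-1}$, so $\limsup v=+\infty$ there and the max principle gives no information. Your Hopf sketch does not address this singularity, and I do not see a way to salvage the argument as stated.

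The paper circumvents the wrong-sign problem entirely by a different factorization: it writes
\[
\vthp(0,\lambda)G_\lambda(x)-\vthp(x,\lambda)-\tfrac12 G_\lambda'(x)=2\pi\lambda\,G_\lambda(x)\,h(x,\lambda)
\]
for an explicit series $h$ with $h(0,\lambda)=0$, reduces the claim to $h_x\geq 0$ on $(-\infty,0)$, and then observes that $g(x,t):=k(x,t)\,h_x\big(x,(4\pi t)^{-1}\big)$ is an \emph{exact} solution of the heat equation, $L[g]=0$. Now Lemma~\ref{maxprinc} applied to $-g$ goes in the right direction, and the boundary check at $x=0$ is precisely \eqref{ineq-20}. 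The same idea---work with a derivative to obtain an exact heat solution rather than an inequality---handles \eqref{vartheta3}: the paper shows $\vthp_x(x,\lambda)\geq 0$ on $[0,1/2]$ by applying the maximum principle to $t^{-1/2}\vthp_x=\sum_{n\geq 1}k_{xx}(x-n,t)$, with the $x=1/2$ endpoint handled directly from the product formula for $\theta_2$. Your route for \eqref{vartheta3} via $g_4=\vthp(x,\lambda)-\vthp(0,\lambda)$ inherits the same zeroth-order obstruction as \eqref{vartheta2} and is not made precise.
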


\begin{proof} The arguments are similar for all the inequalities. We give full details for the proof of \eqref{thp-ineq-eq}, and omit some of the details for the other inequalities.

\medskip

To prove \eqref{thp-ineq-eq}, define the function $(x,t) \mapsto u(x,t)$  by
\begin{align*}
u(x,t) = t^{-1/2} \,\thp\big(x,(4\pi t)^{-1}\big) = \sum_{n=1}^\infty (-1)^{n+1} k(x-n,t),
\end{align*}
and consider 
\begin{align*}
f(x,t) = u(x,t) - k(x,t)\sum_{n=1}^\infty (-1)^{n+1} e^{-\frac{n^2}{4t}}.
\end{align*}
With the change of variable $\lambda = (4 \pi t)^{-1}$ note that \eqref{thp-ineq-eq} is equivalent to 
$$f(x,t)\leq 0$$
for all $x<0$ and $t>0$. We check the assumptions of Lemma \ref{maxprinc}. 

\begin{enumerate}
\item[(i)] $L[f]\ge 0$: Since $u_{xx} - u_t = k_{xx}-k_t =0$, we obtain for all $x<0$ and $t>0$, using \eqref{partialq}, that
\begin{align*}
L[f](x,t)&= \frac{k(x,t)}{4 t^2 }\sum_{n=1}^\infty (-1)^{n+1} n^2 e^{-\frac{n^2}{4t}}\ge 0.
\end{align*}

\item[(ii)] $\{(x,0):x<0\}$ and $\{(0,t):t>0\}$: Since $k(0,t)=t^{-1/2}$ for $t>0$ and $k(x,0) =0$ for $x<0$ we obtain $f(0,t)= 0$ and $f(x,0) =0$. 
\item[(iii)] Limsup at $(0,0)$: The function $u$ is continuous at the origin for $t\to 0+$ and $u(0,0) =0$. Since for any $t>0$
\[
\sum_{n=1}^\infty (-1)^{n+1} e^{-\frac{n^2}{4t}} >0,
\]
we have
\[
\limsup_{\substack{(x,t)\to(0,0)\\x< 0,t> 0}} f(x,t) \le 0.
\]

\item[(iv)] $\{(-\ell, t):t>0\}$ with sufficiently large $\ell$:  Let $\varepsilon>0$. It can be checked directly that for all $\ell\ge \ell_0(\varepsilon)>0$ we have
\[
f(-\ell, t) \le \varepsilon.
\]
\end{enumerate}
An application of Lemma \ref{maxprinc} implies that $f(x,t)\le 0$ for $x<0$ and $t>0$, finishing the proof of \eqref{thp-ineq-eq}.

\medskip

For the proof of \eqref{vartheta-ineq1} we note that the substitution $\lambda = (4\pi t)^{-1}$ gives
\begin{align}\label{rewrite-theta-heat}
\vthp(x,\lambda) &=2\pi\lambda \sum_{n=1}^\infty (n-x)\, e^{-\pi\lambda(n-x)^2}= \frac{1}{2 t} \sum_{n=1}^\infty (n-x) \,e^{-\frac{(n-x)^2}{4t}}.
\end{align}
Define $(x,t)\mapsto f(x,t)$ by
\begin{align*}%\label{def-of-f}
f(x,t) &= t^{-\frac12}\, \vthp\big(0,(4\pi t)^{-1}\big) G_{(4\pi t)^{-1}}(x) - t^{-\frac12} \,\vthp\big(x,(4\pi t)^{-1}\big)
\end{align*}
and note that \eqref{rewrite-theta-heat} implies
\begin{align*}%\label{rep-of-f}
\begin{split}
f(x,t) &= t^{-\frac12} \frac{1}{2t} \sum_{n=1}^\infty n \,e^{-\frac{n^2}{4t}} \,e^{-\frac{x^2}{4t}} - \frac{1}{2t^{\frac32}} \sum_{n=1}^\infty (n-x)\,e^{-\frac{(n-x)^2}{4t}}\\
&= k(x,t) \sum_{n=1}^\infty \frac{n}{2t} \,e^{-\frac{n^2}{4t}} -\sum_{n=1}^\infty k_x(x-n,t).
\end{split}
\end{align*}
Apply $L=\partial_{xx} -\partial_t$. The function $f(x,t)$ is continuous on the quadrant $x\le 0$ and $t\ge 0$ with the possible exception of $(0,0)$. Since $k$ and $k_x$ are in the kernel of $L$, an application of \eqref{ineq-31} after the substitution $t\mapsto (4t)^{-1}$ gives $L[f](x,t) \le 0$ for $t>0$. We apply Lemma \ref{maxprinc} to $-f$. The required inequalities on the sets $\{(0,t):t>0\}$, $\{(x,0):x<0\}$, $\{(L,t):t>0\}$ with $L<0$ and sufficiently large $|L|$, and at the origin can be checked directly, the calculations are omitted. This finishes the proof of \eqref{vartheta-ineq1}.

\medskip

The proof of \eqref{vartheta2} requires some preparation. We define $(x,\lambda)\mapsto h(x,\lambda)$ by
\begin{align}\label{def-of-h}
h(x,\lambda) = \sum_{n=1}^\infty n e^{-\pi\lambda n^2} +\frac{x}{2}+\sum_{n=1}^\infty (x-n) \,e^{\pi\lambda (2nx -n^2)} 
\end{align}
and we note from \eqref{vth+Intro} that
\begin{align*}
2\pi\lambda &\,e^{-\pi\lambda x^2}\,h(x,\lambda)\\
 &= 2\pi\lambda \sum_{n=1}^\infty n e^{-\pi\lambda(n^2+x^2)}+\pi\lambda x e^{-\pi\lambda x^2}- 2\pi\lambda \sum_{n=1}^\infty (n-x) e^{-\pi\lambda(n-x)^2}  \\
&=\vthp(0,\lambda) G_\lambda(x)- \frac{G_\lambda'(x)}{2} - \vthp(x,\lambda),
\end{align*}
hence we need to show that
\begin{align*}%\label{to-show-ineq}
h(x,\lambda)\le 0
\end{align*}
for $x<0$ and $\lambda>0$. Since \eqref{def-of-h} implies
\begin{align*}%\label{value-h-0}
h(0,\lambda) = 0
\end{align*}
for $\lambda>0$, it suffices to show that $x\mapsto h(x,\lambda)$ is an increasing function on $(-\infty,0)$ for every $\lambda > 0$. We have from \eqref{def-of-h}
\begin{align*}
h_x(x,\lambda) &= \frac12 + \sum_{n=1}^\infty \big(1+2\pi\lambda n (x-n)\big)e^{-\pi\lambda(n^2-2nx)}.
\end{align*}
We let $\lambda = (4\pi t)^{-1}$ and  define $g$ by 
\begin{align*}
 g(x,t) = k(x,t) \,h_x\big(x,(4\pi t)^{-1}\big),
\end{align*}
where $k$ is given by \eqref{def-k}. We shall apply Lemma \ref{maxprinc} to $-g$. Note that
\begin{align*}
\begin{split}
g(x,t) &= \frac12 k(x,t) + \sum_{n=1}^\infty \left( t^{-\frac12} \,e^{-\frac{(x-n)^2}{4t}} - \frac{n}{2 t^{\frac32}} (n-x) \,e^{-\frac{(x-n)^2}{4t}}  \right)\\
&= \frac12 k(x,t) + \sum_{n=1}^\infty k(x-n,t) - \sum_{n=1}^\infty n k_x(x-n,t)\,,
\end{split}
\end{align*}
and in particular $L[g] =0$. For the boundary calculations we obtain, using \eqref{ineq-20} with $t\mapsto (4t)^{-1}$,
\begin{align*}
g(0,t) &= \frac{1}{\sqrt{t}} \left(\frac12+ \sum_{n=1}^\infty e^{-\frac{n^2}{4t}} \left(1- \frac{n^2}{2t}\right)\right)\ge  0\,,
\end{align*}
which gives the required inequality on $\{(0,t):T>0\}$. The remaining sides can be checked directly, proofs are omitted. An application of Lemma \ref{maxprinc} finishes the proof of \eqref{vartheta2}.

\medskip

The left side of \eqref{vartheta3} follows immediately from $\vthp(0,\lambda)>0$. In order to show the second inequality in \eqref{vartheta3} we establish that $x\mapsto \vthp(x,\lambda)$ is an increasing function on $[0,1/2]$ for $\lambda>0$. The goal is therefore to show for the partial derivative $\vthp_x$ that
\begin{align}\label{vthp-increasing}
\vthp_x(x,\lambda)\ge 0
\end{align}
for $0\le x\le 1/2$ and $\lambda>0$. We apply the maximum principle to $(x,t)\mapsto t^{-\frac12} \vthp_{x}(x,\big(4\pi t)^{-1}\big)$. Since 
\[
G_\lambda''(x) = 2\pi\lambda e^{-\pi\lambda x^2}(2\pi\lambda x^2-1),
\]
we have with the substitution $\lambda = (4\pi t)^{-1}$
\begin{align}\label{varth-rep}
\begin{split}
t^{-\frac12} \vthp_{x}(x,\lambda) &= t^{-\frac12} \sum_{n=1}^\infty G_\lambda''(n-x) = \sum_{n=1}^\infty k_{xx}(n-x,t).
\end{split}
\end{align}
In particular, $(x,t)\mapsto t^{-\frac12} \vthp_{x}\big(x,(4\pi t)^{-1}\big)$ is in the kernel of $L=\partial_{xx} -\partial_t$. Consider $\vthp_x$ on the set $\{(0,t):t>0\}$. Since $\vthp(0,\lambda)>0$, \eqref{vartheta-ineq1} implies that $\vthp(x,\lambda)\le \vthp(0,\lambda)$ for $x<0$. Hence for $x=0$
\begin{align}\label{bound0t}
\vthp_x\big(0,(4\pi t)^{-1}\big)\ge 0\qquad(t>0).
\end{align}
Consider next the vertical half line $\{(1/2,t):t>0\}$. We have
\begin{align}\label{thetau-rep}
\vthp_{x}\big(\tfrac{1}{2},\lambda\big) &= 2\pi \lambda \sum_{n=1}^\infty \left(2\pi \lambda\big(n-\tfrac{1}{2}\big)^2-1\right)\, e^{-\pi\lambda(n-\frac12)^2}.
\end{align}
From the product representation \eqref{th2-prod} we have 
\begin{equation*}
\theta_2\big(0,i\lambda^{-1}\big) = \prod_{n=1}^\infty\left(1-e^{-\frac{2\pi n}{\lambda}}\right)\prod_{n=1}^\infty\left(1-e^{-\frac{\pi(2n-1)}{\lambda}} \right)^2,
\end{equation*}
which implies in particular that $\lambda \mapsto \theta_2\big(0,i\lambda^{-1}\big) $ decreases, hence
\begin{equation}\label{der-th2}
\frac{\textrm{d}}{\dl}  \theta_2\big(0,i\lambda^{-1}\big) \leq 0
\end{equation}
for all positive $\lambda$. From the transformation formula for $\theta_2$ we obtain
\[
\theta_2\big(0,i \lambda^{-1}\big) = \lambda^{\frac12}\sum_{n=-\infty}^\infty e^{-\pi  \lambda (n+\frac12)^2}
\]
and differentiation with respect to $\lambda$ gives, together with \eqref{der-th2},
\begin{align*}
\begin{split}
0\ge \frac{\textrm{d}}{\dl}\theta_2\big(0,i \lambda^{-1}\big)&= \frac{1}{2\lambda^{\frac12}}\sum_{n=-\infty}^\infty e^{-\pi  \lambda (n+\frac12)^2} - \pi \lambda^{\frac12} \sum_{n=-\infty}^\infty \big(n+\tfrac{1}{2}\big)^2e^{-\pi  \lambda (n+\frac12)^2}\\
&=\frac{1}{2\lambda^{\frac12}} \sum_{n=-\infty}^\infty  \left(1-2\pi \lambda\big(n+\tfrac{1}{2}\big)^2\right)  e^{-\pi\lambda (n+\frac12)^2},
\end{split}
\end{align*}
hence from \eqref{thetau-rep} we obtain that
\begin{align}\label{bound12t}
\vthp_{x}\big(\tfrac12,(4\pi t)^{-1}\big)\ge 0\qquad(t>0).
\end{align}
Finally, we note that $t^{-\frac12}\vthp_x(x,\lambda)$ has a continuous extension to $(x,\lambda)$ for $x\in\RR\backslash\{1,2,....\}$ according to \eqref{varth-rep}. Hence
\begin{align}\label{boundu0}
\lim_{t\to0+} t^{-\frac12} \vthp_x\big(x,(4\pi t)^{-1}\big) = 0\qquad(0\le x\le 1/2).
\end{align}
From \eqref{varth-rep}, \eqref{bound0t}, \eqref{bound12t}, and \eqref{boundu0} it follows that the assumptions of Lemma \ref{maxprinc} are satisfied for $-\vthp_x$, hence \eqref{vthp-increasing} follows. As mentioned above, this implies \eqref{vartheta3}.
\end{proof}

\section{Proof of Theorem \ref{truncated-ba-theorem} }%%%%%%%%%%%%%%%%%%%%%%%%%%%%%%%%%%%%%%%%%%%%%%%%%%%%%%%%%%%%%%%%%%%%%%%%%%%

Let $K$ be an entire function of exponential type $\pi$ such that $\gplus- K$ is in $L^1(\RR)$. The partial sums of the Fourier expansion
\[
\text{sgn}(\sin\pi x) = \lim_{N\to\infty} \frac{i}{\pi } \sum_{|n|\le N} \frac{1}{\big(n+\frac12\big)}\,e^{-2\pi i(n+\frac12)x}
\]
are uniformly bounded, hence, letting
\begin{align*}
\varphi_\lambda(x) := \gplus(x)-K(x),
\end{align*}
dominated convergence and the Paley-Wiener theorem give
\begin{align}\label{FT-trick}
\begin{split}
\int_{-\infty}^\infty \text{sgn}(\sin\pi x) (\gplus(x)-K(x))\,\dx &= \lim_{N\to\infty} \frac{i}{\pi } \sum_{|n|\le N} \frac{\ft{\varphi}_\lambda\big(n+\frac12\big) }{n+\frac12}\\
&= \frac{i}{\pi } \sum_{n=-\infty}^\infty \frac{\ft{\gplus}\big(n+\frac12\big) }{n+\frac12}.
\end{split}
\end{align}
Taking absolute values in the integral, and using \eqref{th1} and \eqref{FT-Gaussian-3}, we arrive at the lower bound \eqref{ba-bound}. As mentioned in the introduction, $K_{\lambda}^{+}$ is entire and has exponential type $\pi$, so it remains to show that the lower bound \eqref{ba-bound} is attained for $K=K_{\lambda}^{+}$ (which follows once \eqref{extremal-eq} is established) and that the inequality is strict for any other $K$ of exponential type $\pi$. 

\medskip

Fix $\lambda>0$. In order to show \eqref{extremal-eq} we define the remainder $z\mapsto I(z)$ by
\begin{align*}
I(z) = \frac{\pi}{\sin\pi z}\big(K_{\lambda}^{+}(z) - \gplus(z)\big),
\end{align*}
and we aim to prove that for all real $x\neq0$ we have $I(x)\le 0$. This is shown in Lemma \ref{non-pos-I-1} for negative $x$ and in Lemma \ref{non-pos-I-2} for positive $x$.

\begin{lemma}\label{non-pos-I-1} For all $x<0$ we have
\begin{align*}
I(x)\le 0.
\end{align*}
\end{lemma}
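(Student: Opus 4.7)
The plan is to use the integral representations of Lemma \ref{intrep} to express $I(x)$ as a double integral whose integrand is sign-controlled by the inequality \eqref{thp-ineq-eq}. Since $\gplus(x)=0$ for $x<0$, the definition of $K_{\lambda}^{+}$ gives
\begin{equation*}
I(x) \;=\; \frac{\pi K_{\lambda}^{+}(x)}{\sin \pi x} \;=\; \sum_{n=1}^\infty (-1)^{n} G_\lambda(n)\left\{\frac{1}{x-n}-\frac{1}{x}\right\}.
\end{equation*}

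For each $n\ge 1$ we have $\Re x <0 <\Re n$, so identity \eqref{intrep-1} applies to $G_\lambda(n)/(x-n)$ with $w=n$. Since $G_\lambda(0)=1$, writing $1/x = G_\lambda(0)/(x-0)$ and taking $w=0$ in \eqref{intrep-1} yields
\begin{equation*}
\frac{1}{x} \;=\; -2\pi\lambda^{\frac32}\int_{-\infty}^{\infty}\!\!\int_{-\infty}^{0} e^{-2\pi\lambda tu}\,G_\lambda(x-t)\,G_\lambda(u)\,\du\,\dt,
\end{equation*}
where we used $G_\lambda(-u)=G_\lambda(u)$. Substituting both representations into the series for $I(x)$ and interchanging sum and double integral by Fubini (justified by the super-exponential Gaussian decay) reduces the inner $n$-sums via
\begin{equation*}
\sum_{n=1}^\infty (-1)^{n} G_\lambda(n-u) = -\thp(u,\lambda), \qquad \sum_{n=1}^\infty (-1)^{n} G_\lambda(n) = -\thp(0,\lambda),
\end{equation*}
and collects the two contributions into
\begin{equation*}
I(x) \;=\; 2\pi \lambda^{\frac32}\int_{-\infty}^{\infty}\!\!\int_{-\infty}^{0} e^{-2\pi\lambda tu}\,G_\lambda(x-t)\,\bigl[\thp(u,\lambda)-\thp(0,\lambda)\,G_\lambda(u)\bigr]\,\du\,\dt.
\end{equation*}

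On the $u$-domain $(-\infty,0]$, inequality \eqref{thp-ineq-eq} (with $u$ in place of $x$) states that the bracket is nonpositive, while the remaining factors $e^{-2\pi\lambda tu}$ and $G_\lambda(x-t)$ are strictly positive. Hence the whole integrand is $\le 0$, giving $I(x)\le 0$ as required. The only genuinely technical step is the Fubini interchange, but this is painless: the bound $|G_\lambda(n)|\,|G_\lambda(n-u)|\,|G_\lambda(x-t)|\le G_\lambda(n)$ times a Gaussian factor makes the triple integral $\sum_n\int\!\!\int$ absolutely convergent. The heart of the argument is the already-established maximum-principle inequality \eqref{thp-ineq-eq}; all other steps are formal manipulations.
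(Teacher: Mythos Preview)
Your approach is essentially the same as the paper's: both arrive at the representation
\[
I(x)=2\pi\lambda^{3/2}\int_{-\infty}^{\infty}\int_{-\infty}^{0} e^{-2\pi\lambda tu}\,G_\lambda(x-t)\bigl[\thp(u,\lambda)-\thp(0,\lambda)G_\lambda(u)\bigr]\,\du\,\dt
\]
and then invoke \eqref{thp-ineq-eq}. The only difference is that the paper passes through partial sums $I_N$ and justifies the interchange by dominated convergence (splitting the $t$-integral at $0$), whereas you appeal to Fubini directly. Your Fubini justification as written is garbled (the displayed inequality does not parse: $G_\lambda(n)$ and $G_\lambda(n-u)$ never occur in the same integrand), but the correct argument is immediate: the integrands in \eqref{intrep-1} are nonnegative, so for each $n$ the double integral of the absolute value equals exactly $G_\lambda(n)/(n-x)$ (respectively $G_\lambda(n)/|x|$), and these are summable in $n$.
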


\begin{proof} Let $x<0$. Define $I_N$ by
\begin{align*}
I_N(x) = \sum_{n=1}^N (-1)^n G_\lambda(n)\left(\frac{1}{x-n} - \frac{1}{x}\right),
\end{align*}
and note that $I_N\to I$ uniformly on compact subsets of $\CC / \Z^{+}$. Applying \eqref{intrep-1} for non-negative $n$ we obtain 

\begin{equation*}
I_N(x) = 2\pi \lambda^{\frac32} \int_{-\infty}^\infty G_\lambda(x-t)\int_{-\infty}^0 e^{-2\pi\lambda tu} \big\{\thpn(u,\lambda) - G_\lambda(u)\thpn(0,\lambda)\big\}\,\du\,\dt.
\end{equation*}
By \eqref{thp-growth1}, we can find $c(\lambda)>0$ such that
\begin{align*}
|I_N(x)| & \leq 2\pi \lambda^{\frac32} \int_{-\infty}^\infty G_\lambda(x-t)\int_{-\infty}^0 e^{-2\pi\lambda tu} \, \big|\thpn(u,\lambda) - G_\lambda(u)\thpn(0,\lambda)\big|\,\du\,\dt \\
&\le c(\lambda) \int_{-\infty}^\infty G_\lambda(x-t)\int_{-\infty}^0 e^{-2\pi\lambda tu} \,G_\lambda(u)\, \du\,\dt.
\end{align*}
We split now the outer integral at $t=0$. For $t\le 0$ we estimate $|e^{-2\pi \lambda tu}|\le 1$ and note that the resulting double integral is finite. For $t\ge 0$ we use first that
\[
G_\lambda(x-t) e^{-2\pi\lambda  ut} G_\lambda(u) = G_\lambda(x) e^{2\pi\lambda xt}  G_\lambda(t+u),
\]
extend the inner integral over $u$ to $\RR$, and use finally that $e^{2\pi \lambda xt}$ is integrable since $xt<0$. Dominated convergence then implies that
\begin{align*}
I(x) = 2\pi \lambda^{\frac32} \int_{-\infty}^\infty \int_{-\infty}^0 e^{-2\pi\lambda tu} G_\lambda(x-t)\{\thp(u,\lambda)-G_\lambda(u)\thp(0,\lambda)\}\,\du \,\dt,
\end{align*}
and \eqref{thp-ineq-eq} implies that the right-hand side is non-positive.
\end{proof}

The calculation for positive $x$ is slightly more involved. We prove first an integral representation for $I(z)$ valid when $\Re z >0 $.

\begin{lemma} \label{IntRep-1}
For $\Re z>0$ we have
\begin{align}\label{intrep1to6}
I(z) = 2\pi\lambda^{\frac32}\sum_{i=1}^6 L_i(z)\,,
\end{align}
where
\begin{align*}
\begin{split}
L_1(z)&= \int_{-\infty}^0 G_{\lambda}(z-t) \int_{-\infty}^0 e^{-2\pi \lambda tu} \,\thp(u,\lambda)\,\du\,\dt\,,\\
L_2(z)&=-\int_{0}^\infty G_{\lambda}(z-t) \int_{0}^\infty e^{-2\pi \lambda tu} \, \thp(u,\lambda)\,\du\,\dt\,,\\
L_3(z)&= \int_{0}^\infty G_{\lambda}(z-t) \int_{0}^\infty  e^{-2\pi \lambda tu} \, G_\lambda(u)\,\thp(0,\lambda) \,\du\,\dt\,,\\
L_4(z)&= \int_{-\infty}^0 G_{\lambda}(z-t) \int_{0}^\infty  e^{-2\pi \lambda tu}\, G_\lambda(u) \,\thp(0,\lambda) \,\du\,\dt\,,\\
L_5(z)&= \int_{-\infty}^0 G_{\lambda}(z-t) \int_{-\infty}^0 e^{-2\pi \lambda tu} \,\big\{\thp(-u,\lambda)-G_\lambda(u)\big\} \,\du\,\dt\,,\\
L_6(z)&=  \int_{-\infty}^0 G_{\lambda}(z-t) \int_{0}^\infty e^{-2\pi \lambda tu}\, \big\{\thp(-u,\lambda)-G_\lambda(u)\big\}\,\du\,\dt.
\end{split}
\end{align*}
\end{lemma}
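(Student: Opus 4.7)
The strategy is to substitute the Mittag--Leffler expansion of $\pi/\sin\pi z$ into the formula for $I(z)$, convert each resulting rational expression into a double Gaussian integral via Lemma~\ref{intrep}, and interchange summation with integration so that the inner sums close up into the theta series $\thp(u,\lambda)$, $\thp(-u,\lambda)$, and $\thp(0,\lambda)$ appearing in $L_1,\dots,L_6$.

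Since $\gplus(z) = G_\lambda(z)$ for $\Re z>0$, the definition of $I$ combined with the series for $K_{\lambda}^{+}$ and the expansion
\[
\frac{\pi}{\sin\pi z} = \frac{1}{z} + \sum_{n=1}^\infty (-1)^n\left\{\frac{1}{z-n} + \frac{1}{z+n}\right\}
\]
yields, after regrouping,
\[
I(z) = \sum_{n=1}^\infty (-1)^n\left\{\frac{G_\lambda(n)-G_\lambda(z)}{z-n} - \frac{G_\lambda(n)}{z} - \frac{G_\lambda(z)}{z+n}\right\} - \frac{G_\lambda(z)}{z}.
\]
To each of the four types of terms on the right I would apply the appropriate identity from Lemma~\ref{intrep}: \eqref{intrep2-eq} for the difference quotient; \eqref{intrep-2} with $w=0$ for $G_\lambda(n)/z$ (valid since $\Re z>0$); and \eqref{gauss3} with $w=-n$ and with $w=0$ for $G_\lambda(z)/(z+n)$ and $G_\lambda(z)/z$ respectively (valid since $\Re z>0>-n$).

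Next, I would interchange the summation over $n$ with the double integrals, using $\sum_{n\ge 1}(-1)^n G_\lambda(n\pm u) = -\thp(\mp u,\lambda)$ and $\sum_{n\ge 1}(-1)^n G_\lambda(n) = -\thp(0,\lambda)$ to identify the resulting series. The two signed integrals coming from \eqref{intrep2-eq} yield $L_1$ and $L_2$; the single integral coming from \eqref{intrep-2} produces $L_3+L_4$ once its outer $\RR$-integration is split at $0$; and the four integrals arising from the two applications of \eqref{gauss3} combine so that the $G_\lambda(u)$-pieces produced by $G_\lambda(z)/z$ pair with the $\thp(-u,\lambda)$-pieces produced by $G_\lambda(z)/(z+n)$ to form the integrands $\thp(-u,\lambda)-G_\lambda(u)$ of $L_5$ and $L_6$.

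The main obstacle is justifying the interchange of sum and integral, since absolute convergence fails on the quadrant $t,u>0$ if one examines $L_2$ or $L_3$ in isolation. I would address this by truncating the defining series of $K_{\lambda}^{+}$ at level $N$ (so that the manipulation above is a finite computation involving only $\thpn$) and passing to the limit as $N\to\infty$ by dominated convergence, as in the proof of Lemma~\ref{non-pos-I-1}. The necessary uniform-in-$N$ majorants are supplied by Lemma~\ref{ub-gauss}: the bounds $|\thpn(u,\lambda)|\le 2G_\lambda(u-1)$ for $u\le 1$ and $|\thpn(u,\lambda)|=\mc{O}(1)$ for $u\ge 1$, together with the Gaussian decay of $G_\lambda(z-t)$ for fixed $z$ with $\Re z>0$, handle all quadrants where $tu\le 0$, on which $|e^{-2\pi\lambda tu}|\le 1$. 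On the remaining quadrant $t,u>0$ one must group $L_2^{(N)}$ with $L_3^{(N)}$ before bounding, exploiting the fact that the combined integrand $G_\lambda(u)\thpn(0,\lambda)-\thpn(u,\lambda)$ vanishes at $u=0$ and decays rapidly as $u\to\infty$, which produces the integrable dominating function needed to pass to the limit. The outcome is exactly the identity $I(z)=2\pi\lambda^{3/2}\sum_{i=1}^6 L_i(z)$.
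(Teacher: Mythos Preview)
Your overall plan matches the paper's proof: truncate $I$ to $I_N$, expand each rational term via \eqref{intrep2-eq} and the identities of Lemma~\ref{intrep}, and pass to the limit by dominated convergence. The decomposition of $I(z)$ you wrote and the assignment of integral identities to its four pieces agree with the paper's splitting $I_N=I_{1,N}+I_{2,N}+I_{3,N}$.

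The gap is in your handling of the quadrant $t,u>0$. You propose to couple $L_2^{(N)}$ with $L_3^{(N)}$ and assert that the combined integrand $G_\lambda(u)\,\thpn(0,\lambda)-\thpn(u,\lambda)$ decays rapidly as $u\to\infty$, uniformly in $N$. That is false: by \eqref{thp-to-th1} one has $\thp(u,\lambda)= -\lambda^{-1/2}\theta_1(u,i\lambda^{-1})-\thp(-u,\lambda)+G_\lambda(u)$, and since the last two terms decay for $u>0$, the limit $\thp(u,\lambda)$ oscillates like $-\lambda^{-1/2}\theta_1(u,i\lambda^{-1})$ with amplitude bounded away from zero. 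Hence any uniform majorant $g(u)\ge |G_\lambda(u)\thpn(0,\lambda)-\thpn(u,\lambda)|$ is bounded below by a positive constant on arbitrarily long $u$-intervals, forcing $\int_0^\infty e^{-2\pi\lambda tu}g(u)\,\du\gtrsim 1/t$ as $t\to 0^+$, which is not integrable against $G_\lambda(z-t)$ near $t=0$; the vanishing at $u=0$ does not help. The paper's device is different and treats $L_2^{(N)}$ in isolation by integration by parts in $u$: with $\Gamma_\lambda(u)=-\int_u^\infty G_\lambda$ and $T_N^+(u,\lambda)=\sum_{n=1}^N(-1)^n\Gamma_\lambda(n-u)$ one has $\partial_u T_N^+=\thpn$ and, by an alternating-series bound, $|T_N^+|\le c_\lambda$ uniformly in $u$ and $N$. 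Then
\[
\int_0^\infty e^{-2\pi\lambda tu}\,\thpn(u,\lambda)\,\du = -T_N^+(0,\lambda)+2\pi\lambda t\int_0^\infty e^{-2\pi\lambda tu}\,T_N^+(u,\lambda)\,\du
\]
is bounded by $2c_\lambda$ uniformly in $N$ and $t>0$, and dominated convergence applies directly.
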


\begin{proof} The function $I$ is the limit as $N\to\infty$ of
\begin{align*}
I_N(z) = \sum_{n=1}^N (-1)^n\frac{G_\lambda(n)-G_\lambda(z)}{z-n} +\sum_{n=1}^N (-1)^{n+1} \frac{G_\lambda(n)}{z} +\sum_{n=-N}^0 (-1)^{n+1} \frac{G_\lambda(z)}{z-n},
\end{align*}
and we denote these three sums by $I_N = I_{1,N} + I_{2,N} + I_{3,N}$. Equation \eqref{intrep2-eq} implies
\begin{align*}%\label{pr-1}
\begin{split}
I_{1,N}(z) &=  2\pi\lambda^{\frac32} \int_{-\infty}^0 G_\lambda(z-t) \int_{-\infty}^0e^{-2\pi\lambda tu}\, \thpn(u,\lambda)\,\du\,\dt \\
&\qquad - 2\pi\lambda^{\frac32}\int_0^\infty G_\lambda(z-t) \int_0^\infty e^{-2\pi\lambda tu}\, \thpn(u,\lambda)\,\du\,\dt\\
&:= I_{1,1,N}(z)-I_{1,2,N}(z).
\end{split}
\end{align*}
In $I_{1,1,N}$ we may apply dominated convergence using \eqref{thp-growth1} and $|e^{-2\pi\lambda tu} |\le 1$. Define
\begin{align*}
\Gamma_\lambda(u) &= -\int_u^\infty G_\lambda(t) \,\dt
\end{align*}
and
\begin{align*}
T_N^+(u,\lambda)&= \sum_{n=1}^N (-1)^{n} \,\Gamma_\lambda(n-u).
\end{align*}
From the fact that 
$$|\Gamma_\lambda(u) - \Gamma_\lambda(u+1)|\le \max_{u \leq y \leq u+1}\{G_\lambda(y)\} \,,$$
we obtain $|T_N^+(u,\lambda)|\le c_\lambda$ for all $u \in \R$ and $N\in\NN$. We note that $\frac{\textrm{d}}{\du} T_N^+(u,\lambda) = \theta_N^+(u,\lambda)$. An integration by parts gives for $t>0$
\[
\int_0^\infty e^{-2\pi\lambda tu} \,\theta_N^+(u,\lambda)\,\du = -T_N^+(0,\lambda) +2\pi\lambda t\int_0^\infty e^{-2\pi\lambda tu}\, T_N^+(u,\lambda)\,\du,
\]
and an application of Lebesgue dominated convergence shows that the limit as $N\to \infty$ in $I_{1,2,N}$ may be evaluated under the integral sign.

Hence $I_{1,N}$ converges to
\begin{align}\label{in1}
\begin{split}
I_{1}(z) &=  2\pi\lambda^{\frac32} \int_{-\infty}^0 G_\lambda(z-t) \int_{-\infty}^0e^{-2\pi\lambda tu}\, \thp(u,\lambda)\,\du\,\dt \\
&\qquad - 2\pi\lambda^{\frac32}\int_0^\infty G_\lambda(z-t) \int_0^\infty e^{-2\pi\lambda tu}\, \thp(u,\lambda)\,\du \,\dt\\
&= 2\pi\lambda^{\frac32} \big\{ L_1(z) + L_2(z)\big\}.
\end{split}
\end{align}

\medskip

Since $\Re{z} > 0$, equation \eqref{intrep-2} with $w=0$ implies
\begin{align*}
I_{2,N}(z) = 2\pi\lambda^{\frac32} \int_{-\infty}^\infty G_\lambda(z-t) \int_0^\infty e^{-2\pi\lambda tu} \,G_\lambda(u)\, \thpn(0,\lambda) \,\du \,\dt\,,
\end{align*}
which converges to
\begin{align}\label{in2}
\begin{split}
I_2(z) &= 2\pi\lambda^{\frac32} \int_{-\infty}^\infty G_\lambda(z-t) \int_0^\infty e^{-2\pi\lambda tu}  \, G_\lambda(u) \,\thp(0,\lambda)\,\du \,\dt\\
&= 2\pi\lambda^{\frac32} \big\{ L_3(z) + L_4(z)\big\}.
\end{split}
\end{align}

\medskip

To estimate $I_{3,N}$ we use expression \eqref{gauss3} and the identity
\[
\sum_{n=-N}^0 (-1)^{n+1} G_\lambda(u-n) = \thpn(-u,\lambda) - G_\lambda(u).
\]
We have
\begin{align}\label{pr-2}
\begin{split}
I_{3,N}(z) &= 2\pi\lambda^{\frac32} \int_{-\infty}^0 G_\lambda(z-t) \int_{-\infty}^0 e^{-2\pi\lambda tu} \big\{\thpn(-u,\lambda) - G_\lambda(u)\big\}\,\du\,\dt\\
&\qquad +2\pi\lambda^{\frac32} \int_{-\infty}^0 G_\lambda(z-t) \int_0^\infty e^{-2\pi\lambda tu} \big\{\thpn(-u,\lambda) - G_\lambda(u)\big\}\,\du\,\dt.
\end{split}
\end{align}
With an analogous argument as the one used for $I_{1,N}$ above we may verify that dominated convergence can be applied to both terms on the right side of \eqref{pr-2}, giving, as $N \to \infty$, that $I_{3,N}$ converges to $I_3$ with
\begin{align}\label{in3}
I_3(z) = 2\pi\lambda^{\frac32} \big\{ L_5(z) + L_6(z) \big\},
\end{align}
thus \eqref{in1}, \eqref{in2} and \eqref{in3} imply \eqref{intrep1to6}.
\end{proof}

\begin{lemma}\label{non-pos-I-2} For all $x>0$ we have $I(x)\le 0$.
\end{lemma}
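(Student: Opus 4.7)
The integral representation \eqref{intrep1to6} from Lemma \ref{IntRep-1} writes $I(z) = 2\pi\lambda^{3/2}\sum_{i=1}^6 L_i(z)$ for $\Re z > 0$, so the task reduces to bounding this sum from above at $z = x > 0$. My plan is to group the six integrals into the three pairs $L_1 + L_5$, $L_2 + L_3$, and $L_4 + L_6$ and to show that each is non-positive by combining the identity \eqref{thp-to-th1} with \eqref{thp-ineq-eq} and Lemma \ref{ineq-th-2}. The grouping is dictated by which of the regions $\{t<0,u<0\}$, $\{t>0,u>0\}$, $\{t<0,u>0\}$ the corresponding $L_i$'s inhabit, and by the fact that the kernel $G_\lambda(x-t)$ is positive for $t \in \R$.

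The pair $L_1 + L_5$ lives on $\{t<0,u<0\}$, and by \eqref{thp-to-th1} its combined integrand collapses to $-\lambda^{-1/2}\theta_1(u, i\lambda^{-1})$. Using the evenness of $\theta_1$, the substitution $u \to -u$ rewrites the inner integral as $\int_0^\infty e^{-2\pi\lambda|t|u}\theta_1(u, i\lambda^{-1})\,du$ for $t<0$; expanding via the series \eqref{th1} and pairing the indices $n$ and $-n-1$ yields the positive sum
\[
\int_0^\infty e^{-su}\theta_1(u, i\lambda^{-1})\,du \;=\; \sum_{n\ge 0} q^{(n+1/2)^2}\,\frac{2s}{s^2+(2n+1)^2\pi^2} \;>\; 0,
\]
with $s = 2\pi\lambda|t|$ and $q = e^{-\pi/\lambda}$, so $L_1 + L_5 \le 0$.

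For the remaining two pairs the key preparation is the symmetrization of \eqref{thp-ineq-eq}. Setting $P(u) := \thp(u,\lambda) - \thp(0,\lambda)G_\lambda(u)$, the identity \eqref{thp-to-th1} applied at both $\pm u$ gives
\[
P(u) + P(-u) \;=\; \lambda^{-1/2}\bigl[\theta_1(0, i\lambda^{-1}) G_\lambda(u) - \theta_1(u, i\lambda^{-1})\bigr].
\]
Since \eqref{thp-ineq-eq} forces $P(-u) \le 0$ for $u>0$, this identity implies
\[
-P(u) \;\le\; \lambda^{-1/2}\bigl[\theta_1(u, i\lambda^{-1}) - \theta_1(0, i\lambda^{-1}) G_\lambda(u)\bigr] \qquad (u>0).
\]

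With this bound in hand, the pair $L_2 + L_3$ on $\{t>0,u>0\}$ has combined integrand exactly $-P(u)$; integrating against $e^{-2\pi\lambda tu}$, applying the bound above, and invoking Lemma \ref{ineq-th-2}, the inner integral becomes strictly negative, so $L_2 + L_3 \le 0$. For the pair $L_4 + L_6$ on $\{t<0,u>0\}$, rewriting $\thp(-u,\lambda)$ via \eqref{thp-to-th1} rearranges the combined integrand to $-\lambda^{-1/2}\theta_1(u, i\lambda^{-1}) - P(u)$, and the same pointwise bound on $-P(u)$ collapses it to at most $-\lambda^{-1/2}\theta_1(0, i\lambda^{-1}) G_\lambda(u) \le 0$, yielding $L_4 + L_6 \le 0$. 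Summing the three pairs then delivers $I(x) \le 0$. The main obstacle is discovering the symmetrization step for $P$: the hypothesis \eqref{thp-ineq-eq} constrains $P$ only on $u<0$, yet four of the six integrals live on the side $u>0$, so sign information must be transported across the origin before Lemma \ref{ineq-th-2} can be applied.
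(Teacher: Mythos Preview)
Your proof is correct and follows essentially the same approach as the paper: the paper groups the $L_i$ into the identical pairs $W_1=L_1+L_5$, $W_2=L_2+L_3$, $W_3=L_4+L_6$ and bounds each via \eqref{thp-to-th1}, \eqref{thp-ineq-eq}, and Lemma~\ref{ineq-th-2}, arriving at the very same pointwise estimates you obtain (your symmetrization identity for $P$ is just a repackaging of the paper's manipulation). The one cosmetic difference is the handling of $L_1+L_5$: the paper exploits the relation $\theta_1(u+1,\tau)=-\theta_1(u,\tau)$ to reduce the inner integral to a finite interval, whereas you compute the Laplace transform term-by-term from the cosine series of $\theta_1$; both routes are short and yield the same sign.
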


\begin{proof} We define $W_1 = L_1+L_5$, $W_2 = L_2+L_3$, and $W_3= L_4+L_6$. Lemma \ref{IntRep-1} implies for $\Re z>0$ that
\begin{align}\label{sumofW}
I(z) = 2\pi\lambda^{\frac32}\sum_{i=1}^3 W_i(z),
\end{align}
and we note that
\begin{align*}
\begin{split}
W_1(z)&= \int_{-\infty}^0 G_{\lambda}(z-t) \int_{-\infty}^0 e^{-2\pi \lambda tu} \big\{-\lambda^{-\frac12} \theta_1\big(u,i\lambda^{-1}\big)\big\} \,\du\,\dt,\\
W_2(z)&=\int_{0}^\infty G_{\lambda}(z-t) \int_{0}^\infty e^{-2\pi \lambda tu}  \big\{ G_\lambda(u)\,\thp(0,\lambda)- \thp(u,\lambda)\big\}\,\du\,\dt,\\
W_3(z)&= \int_{-\infty}^0 G_{\lambda}(z-t) \int_{0}^\infty  e^{-2\pi \lambda tu} \big\{G_\lambda(u) \,\thp(0,\lambda) +  \thp(-u,\lambda)-G_\lambda(u)\big\}\,\du\,\dt. \\
\end{split}
\end{align*}

\medskip

We shall show that $W_i(x)\le 0$ for $i=1,2,3$ and $x>0$. For $\lambda >0$, the function $u \mapsto \theta_1\big(u,i\lambda^{-1}\big)$ is an even, real-valued function with period $2$, satisfying
\begin{equation*}
\theta_1\big(u+1,i\lambda^{-1}\big) = - \theta_1\big(u,i\lambda^{-1}\big).
\end{equation*}
From the product representation \eqref{th1-prod-1} we know that $u \mapsto \theta_1\big(u,i\lambda^{-1}\big)$ has only simple zeros at the points $\Z +\hh$, and therefore $ \theta_1\big(u,i\lambda^{-1}\big) >0$ for all real values of $u$ in the open interval $-\hh < u < \hh$. This is sufficient to establish for $t<0$
\begin{align*}%\label{W1}
\begin{split}
 \int_{-\infty}^0 e^{-2\pi \lambda tu} &\, \big\{-\lambda^{-\frac12} \theta_1\big(u,i\lambda^{-1}\big)\big\} \,\du \\
=& \sum_{k=0}^{\infty} e^{4\pi\lambda t k} \int_{-2}^0 e^{-2\pi \lambda tu} \, \big\{-\lambda^{-\frac12} \theta_1\big(u,i\lambda^{-1}\big)\big\} \,\du \\
=&\frac{1}{\big(1- e^{4\pi\lambda t}\big)} \int_{-2}^0 e^{-2\pi \lambda tu} \, \big\{-\lambda^{-\frac12} \theta_1\big(u,i\lambda^{-1}\big)\big\} \,\du \leq 0,
\end{split}
\end{align*}
which implies 
\begin{equation*}
W_1(x) \leq 0 \qquad(x>0).
\end{equation*}

\medskip

For $u\geq 0$, using  \eqref{thp-to-th1} and \eqref{thp-ineq-eq} we obtain
\begin{align*}
\thp(0,\lambda)&G_\lambda(u) -\thp(u,\lambda)\\
&=  \lambda^{-\frac12}\theta_1\big(u,i\lambda^{-1}\big)+ \thp(0,\lambda)G_\lambda(u) +\thp(-u,\lambda) - G_\lambda(u)\\
&\le  \lambda^{-\frac12}\theta_1\big(u,i\lambda^{-1}\big) + \big(2 \thp(0,\lambda)-1\big)G_\lambda(u) \\
&= \lambda^{-\frac12}\theta_1\big(u,i\lambda^{-1}\big) -\lambda^{-\frac12}\theta_1\big(0,i\lambda^{-1}\big) G_\lambda(u),
\end{align*}
and \eqref{needed-for-l2l3} implies
\begin{align*}
W_2(x) \leq 0
\end{align*}
for $x>0$.

\medskip

To prove $W_3(x)\le 0$ we consider $u\ge 0$ and use \eqref{thp-to-th1} and \eqref{thp-ineq-eq} to get
\begin{align*}
\begin{split}
\thp(0,\lambda)G_\lambda(u) &+\thp(-u,\lambda) -G_\lambda(u)\\
& \le \big(2\thp(0,\lambda) -1\big)\,G_\lambda(u)=-\lambda^{-\frac12}\,\theta_1\big(0,i\lambda^{-1}\big) \,G_\lambda(u)\le 0,
\end{split}
\end{align*}
and this shows that 
\begin{align*}
W_3(x) \leq 0
\end{align*}
for $x>0$. 
\end{proof}

It remains to show that if equality holds in \eqref{ba-bound} for some $K$ of exponential type $\pi$, then $K=K_{\lambda}^{+}$. It follows from \eqref{ba-bound} and \eqref{FT-trick} that for such a function $K$ we must have
\begin{align*}
\int_{-\infty}^\infty \text{sgn}(\sin\pi x)\big\{\gplus(x) - K(x)\big\}\,\dx = \int_{-\infty}^\infty \big|\gplus(x) - K(x)\big|\,\dx.
\end{align*}
The function $x\mapsto \gplus(x) - K(x)$ is continuous for all $x\neq 0$, and hence
\begin{align*}
\text{sgn}(\sin\pi x)\big\{\gplus(x) - K(x)\big\} = \big|\gplus(x) - K(x)\big|
\end{align*}
for $x\neq 0$. It follows that for all $n\in\ZZ\backslash\{0\}$
\begin{align*}
K(n) = \gplus(n) = K_{\lambda}^{+}(n),
\end{align*}
and therefore by a standard interpolation theorem for functions of exponential type $\pi$  \cite[Vol. II, p. 275]{Z}
\begin{align*}
K(z) - K_{\lambda}^{+}(z) = \big(K(0)-K_{\lambda}^{+}(0)\big)\frac{\sin\pi z}{\pi z},
\end{align*}
for all complex $z$. Since $K-K_{\lambda}^{+}$ is integrable, we obtain $K(0) = K_{\lambda}^{+}(0)$ and therefore $K=K_{\lambda}^{+}$.

\section{Proof of Theorem \ref{thm-one-sided}}%%%%%%%%%%%%%%%%%%%%%%%%%%%%%%%%%%%%%%%%%%%%%%%%%%%%%%%%%%%%%%%%%%%%%%%%%%%

\textsl{Proof of Theorem \ref{thm-one-sided} (i).} Let $L: \C \to \C$ be an entire function of exponential type at most $2\pi$, that is real and integrable on $\R$, and satisfies $\gplus(x) \geq L(x)$, for all $x \in \R$. From Poisson summation formula and the Paley-Wiener theorem we have
\begin{align}\label{pf-thm2-1}
\begin{split}
\int_{-\infty}^{\infty} L(x)\,\dx &= \widehat{L}(0) = \sum_{n=-\infty}^{\infty} L(n) \leq   \sum_{n=1}^{\infty} \gplus(n)  =  \frac{\theta_3(0, i \lambda)}{2} - \frac12\,,
\end{split}
\end{align}
and hence
\begin{equation}\label{pf-thm2-2}
\int_{-\infty}^{\infty} \big\{\gplus(x) - L(x) \big\}\,\dx \geq  -\frac{\theta_3(0, i \lambda)}{2} + \frac12\ + \frac{1}{2\sqrt{\lambda}}.
\end{equation}
Observe that $\lplus$ satisfies the equalities in \eqref{pf-thm2-1} and \eqref{pf-thm2-2} since it interpolates $\gplus$ at $\Z\backslash\{0\}$ and is equal to $0$ at $x=0$.

\medskip

We now move to the proof of the inequality $\gplus(x) \geq \lplus(x)$ for all $x\in \R$. We start by defining $z\mapsto R(z)$ by
\begin{equation*}
R(z) = \lplus(z) - \gplus(z).
\end{equation*}

\begin{lemma} The inequality
\begin{align}\label{ineq-xneg}
R(x) \le 0
\end{align}
holds for all $x<0$.
\end{lemma}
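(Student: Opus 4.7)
Since $\gplus(x)=0$ for $x<0$, the desired bound $R(x)\le 0$ reduces to $\lplus(x)\le 0$, and because the prefactor $\sin^2\pi x/\pi^2$ is non-negative I need only show that
\begin{align*}
S(x) := \sum_{n=1}^\infty \left\{\frac{G_\lambda(n)}{(x-n)^2} + \frac{G_\lambda'(n)}{x-n} - \frac{G_\lambda'(n)}{x}\right\}
\end{align*}
is non-positive for every $x<0$. The plan is to produce an integral representation of $S$ by the same mechanism used in Lemma \ref{non-pos-I-1} and then invoke inequality \eqref{vartheta-ineq1}.

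First I would start from \eqref{intrep-1} with $z=x$ and $w=n\ge 1$ and differentiate in $w$, which converts $G_\lambda(w-u)$ into $G_\lambda'(w-u)$ and yields a representation for $G_\lambda'(w)/(x-w) + G_\lambda(w)/(x-w)^2$. Summing over $1\le n\le N$ and using the identity $\sum_{n=1}^N G_\lambda'(n-u) = -\vthpn(u,\lambda)$ (which follows from $G_\lambda'(y) = -2\pi\lambda y\,G_\lambda(y)$) packages the first two contributions to $S_N$ into a single double integral against $\vthpn(u,\lambda)$. For the remaining term I use \eqref{intrep-1} with $w=0$ to represent $1/x$ and multiply by $-\sum_{n=1}^N G_\lambda'(n) = \vthpn(0,\lambda)$. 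Combining the pieces gives
\begin{align*}
S_N(x) = 2\pi\lambda^{\frac32}\int_{-\infty}^\infty G_\lambda(x-t)\int_{-\infty}^0 e^{-2\pi\lambda tu}\big\{\vthpn(u,\lambda) - \vthpn(0,\lambda)\,G_\lambda(u)\big\}\,\du\,\dt.
\end{align*}

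The main technical step is letting $N\to\infty$ under the integral sign. Here I would appeal to the uniform bound \eqref{II-theta-N-ub}, which controls $\vthpn(u,\lambda)$ on $u\le 0$ by a Gaussian-type envelope independent of $N$, while $\vthpn(0,\lambda)\le \vthp(0,\lambda)$. To produce an integrable majorant in $t$, I would split the outer integral at $t=0$ exactly as in the proof of Lemma \ref{non-pos-I-1}: on $t\le 0$ the factor $e^{-2\pi\lambda tu}$ is at most $1$ for $u\le 0$ and Gaussian decay closes the estimate, while on $t\ge 0$ the rearrangement $G_\lambda(x-t)e^{-2\pi\lambda tu}G_\lambda(u) = G_\lambda(x)e^{2\pi\lambda xt}G_\lambda(t+u)$ together with $xt<0$ delivers integrability. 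This convergence verification is, I expect, the most delicate piece of the argument, but it mirrors the corresponding step for $I$ in the previous section. Dominated convergence then produces the limit representation with $\vthp$ in place of $\vthpn$, and inequality \eqref{vartheta-ineq1} forces the bracketed integrand to be non-positive for $u\le 0$. Since both $G_\lambda(x-t)$ and $e^{-2\pi\lambda tu}$ are positive, the resulting double integral is non-positive, yielding $S(x)\le 0$ and completing the proof.
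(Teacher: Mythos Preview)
Your proposal is correct and follows essentially the same approach as the paper: differentiate \eqref{intrep-1} in $w$ to represent the first two summands, use \eqref{intrep-1} with $w=0$ for the third, package everything into a double integral against $\vthpn(u,\lambda)-\vthpn(0,\lambda)G_\lambda(u)$, justify dominated convergence via \eqref{II-theta-N-ub} and the same $t\ge 0$ rearrangement as in Lemma \ref{non-pos-I-1}, and conclude with \eqref{vartheta-ineq1}. The paper carries the $\sin^2\pi z/\pi^2$ factor inside $R_N$ rather than stripping it off at the outset, but this is a cosmetic difference only.
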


\begin{proof} We define
\begin{align}\label{defrn}
R_N(z) = \frac{\sin^2\pi z}{\pi^2}\sum_{n=1}^N \left\{ \frac{G_\lambda(n)}{(z-n)^2} + \frac{G_\lambda'(n)}{z-n}-\frac{G_\lambda'(n)}{z}\right\}
\end{align}
and note that $R_N\to R$ uniformly on compact sets in $\Re z <0$. Recall that
\begin{align*}
\vthp(u,\lambda) = \sum_{n=1}^\infty G_\lambda'(u-n)
\end{align*}
and $\vthp_N$ are the partial sums of $\vthp$. We differentiate \eqref{intrep-1} with respect to $w$. The resulting representation with $w=n$ is used to replace the first two terms in each summand of \eqref{defrn}. The third term in each summand is expanded using \eqref{intrep-1} with $w=0$. In this way we obtain  
\begin{align*}
 \frac{R_N (z)}{2\pi\lambda^{\frac32}} &= - \frac{\sin^2\pi z}{\pi^2}\,\sum_{n=1}^N \int_{-\infty}^\infty \int_{-\infty}^0 e^{-2\pi\lambda tu} G_\lambda(z-t)\\
 & \hspace*{3cm} \big\{G_\lambda'(n-u) - G_\lambda'(n)G_\lambda(u)\big\} \,\du \,\dt\\
&=  \frac{\sin^2\pi z}{\pi^2}\,\int_{-\infty}^\infty \int_{-\infty}^0 e^{-2\pi\lambda tu} G_\lambda(z-t) \big\{\vthp_N(u,\lambda) - \vthp_N(0,\lambda) G_\lambda(u)\big\}\,\du \,\dt.
\end{align*}

It can be checked similarly to the proof of Lemma \ref{non-pos-I-1} that the assumptions of the dominated convergence theorem are satisfied. It follows in  $\Re z<0$ that
\begin{align}\label{Jm-rep}
\begin{split}
R(z)&= 2\pi\lambda^{\frac32}\, \frac{\sin^2\pi z}{\pi^2}\int_{-\infty}^\infty \int_{-\infty}^0 \,e^{-2\pi\lambda tu} G_\lambda(z-t)\big \{\vthp(u,\lambda) - \vthp(0,\lambda) G_\lambda(u) \big\} \,\du\dt,
\end{split}
\end{align}
and an application of \eqref{vartheta-ineq1} gives \eqref{ineq-xneg}.
\end{proof}

The proof of $R(x)\le 0$ for $x>0$ is separated in two lemmata. We first give an integral representation for $R$ in Lemma \ref{intrep-R} valid in the region $0<\Re z$, then prove non-negativity in Lemma \ref{ext-min-nonpositivity}.

\begin{lemma} \label{intrep-R}
For $0<\Re z$ we have
\begin{align}\label{sum-of-Is}
R(z)  = 2\pi\lambda^{\frac32} \, \frac{\sin^2\pi z}{\pi^2}\,\sum_{k=1}^6 S_k(z)\,,
\end{align}
where
\begin{align}\label{def-of-theIs}
\begin{split}
S_1(z)&= \int_{-\infty}^0 \int_{-\infty}^0 e^{-2\pi\lambda tu} \, G_\lambda(z-t) \,\vthp(u,\lambda) \,\du \,\dt\,, \\
S_2(z)&= -\int_0^\infty \int_0^\infty e^{-2\pi\lambda tu} \,G_\lambda(z-t)\, \vthp(u,\lambda) \,\du \,\dt\,,\\
S_3(z) &= \int_{-\infty}^0 \int_0^\infty e^{-2\pi\lambda t u} \,G_\lambda(z-t) \,G_\lambda(u)\,\vthp(0,\lambda)\,\du \,\dt\,,\\ 
S_4(z)&=  \int_{0}^\infty \int_0^\infty e^{-2\pi\lambda t u}\, G_\lambda(z-t) \,G_\lambda(u)\,\vthp(0,\lambda) \,\du \,\dt\,,\\ 
S_5(z)&= \int_{-\infty}^0 \int_{-\infty}^0 e^{-2\pi\lambda t u} \,G_\lambda(z-t) \,\big\{G_\lambda'(u) - \vthp(-u,\lambda)\big\} \,\du \,\dt\,,\\ 
S_6(z) &=   \int_{-\infty}^0 \int_0^\infty  e^{-2\pi\lambda t u} \,G_\lambda(z-t) \,\big\{ G_\lambda'(u) - \vthp(-u,\lambda)\big\} \,\du \,\dt.
\end{split}
\end{align}
\end{lemma}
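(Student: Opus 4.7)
The plan is to adapt the three-part decomposition developed in the proof of Lemma \ref{IntRep-1}. For $\Re z > 0$ we have $\gplus(z) = G_\lambda(z)$, so $R(z) = \lplus(z) - G_\lambda(z)$. The crucial observation is the partial fraction identity
\[
\frac{\pi^2}{\sin^2\pi z} = \sum_{n \in \Z}\frac{1}{(z-n)^2},
\]
which lets one absorb $G_\lambda(z)$ into a series paralleling the defining series for $\lplus$. I would introduce the partial sums $R_N(z) = (\sin^2\pi z/\pi^2)[J_{1,N}(z) + J_{2,N}(z) + J_{3,N}(z)]$, where
\begin{align*}
J_{1,N}(z) &= \sum_{n=1}^N\left\{\tfrac{G_\lambda(n)-G_\lambda(z)}{(z-n)^2}+\tfrac{G_\lambda'(n)}{z-n}\right\},\\
J_{2,N}(z) &= -\tfrac{1}{z}\sum_{n=1}^N G_\lambda'(n),\\
J_{3,N}(z) &= -\sum_{n=-N}^0\tfrac{G_\lambda(z)}{(z-n)^2},
\end{align*}
noting that $R_N\to R$ uniformly on compact subsets of $\{\Re z >0\}$.

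Each $J_{k,N}$ would then be recast as a Gaussian double integral via Lemma \ref{intrep}. For $J_{1,N}$ I would differentiate \eqref{intrep2-eq} in $w$ and set $w=n$; summing over $n$ yields $\sum_{n=1}^N G_\lambda'(n-u) = -\vthpn(u,\lambda)$, using the identity $\vthp(u,\lambda) = \sum_{n\ge 1}G_\lambda'(u-n)$ recalled in the proof of Lemma \ref{ineq-xneg} together with the oddness of $G_\lambda'$. This produces in the limit the pair $S_1+S_2$. For $J_{2,N}$ I would apply \eqref{intrep-2} with $w=0$ to represent $1/z$ as a double integral on $(-\infty,\infty)\times(0,\infty)$; combined with $\sum_{n=1}^N G_\lambda'(n) = -\vthpn(0,\lambda)$ and splitting the outer $t$-integral at $0$, this yields $S_3+S_4$. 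For $J_{3,N}$, since $\Re z > 0 \ge n$ throughout, I would differentiate \eqref{gauss3} in $w$, set $w=n$, and use the parity identity $\sum_{n=-N}^0 G_\lambda'(n-u) = \vthpn(-u,\lambda) - G_\lambda'(u)$; after the sign flip inherent in $J_{3,N}$, this produces $S_5+S_6$.

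The main obstacle will be the justification of passing to the limit $N\to\infty$ inside each double integral. This follows the template of Lemma \ref{IntRep-1}: on the mixed-sign regions where $tu\le 0$ one has $|e^{-2\pi\lambda tu}|\le 1$ and Lemma \ref{ub-gauss} supplies integrable majorants; on the quadrants where $tu > 0$ one integrates by parts in $u$ against a uniformly-bounded primitive of $\vthpn$ (the analog of $T_N^+$ used there) and employs the algebraic identity $G_\lambda(z-t)\,e^{-2\pi\lambda tu}G_\lambda(u) = G_\lambda(z)\,e^{2\pi\lambda zt}G_\lambda(t+u)$ to reduce matters to the integrability of $e^{2\pi\lambda(\Re z)t}$ against a Gaussian in $t$ of the sign opposite to $\Re z$. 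Grouping the six resulting limits by the signs of $t$ and $u$ matches them term-for-term with $S_1,\ldots,S_6$ from \eqref{def-of-theIs}, which yields the representation \eqref{sum-of-Is}.
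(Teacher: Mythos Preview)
Your proposal is correct and follows essentially the same route as the paper: the same partial-sum decomposition $R_N = (\sin^2\pi z/\pi^2)(J_{1,N}+J_{2,N}+J_{3,N})$, the same use of the differentiated identities \eqref{intrep2-eq}, \eqref{gauss3} and of \eqref{intrep-2} at $w=0$, and the same pairing with $S_1,\dots,S_6$. One slip to fix in your convergence sketch: the inequality $|e^{-2\pi\lambda tu}|\le 1$ holds when $tu\ge 0$, not on the mixed-sign regions; the paper handles $S_{1,N},S_{3,N},S_{4,N}$ directly via Lemma \ref{ub-gauss} and the Gaussian identity you mention, and reserves the integration-by-parts trick for $S_{2,N}$ and $S_{5,N}$.
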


\begin{proof}
Let $ 0 < \Re z$ and define
\begin{align*}
\begin{split}
R_N(z) &= \, \frac{\sin^2\pi z}{\pi^2}\,\left(\sum_{n=1}^N \left\{ \frac{G_\lambda(n) -G_\lambda(z)}{(z-n)^2} +\frac{G_\lambda'(n)}{z-n} -\frac{G_\lambda'(n)}{z}\right\}- \sum_{n=-N}^0 \frac{G_\lambda(z)}{(z-n)^2}\right).
\end{split}
\end{align*}
We note that $R_N\to R$ uniformly in compact sets in $0<\Re z$ as $N\to\infty$. We differentiate \eqref{intrep2-eq} and \eqref{gauss3} with respect to $w$. Together with \eqref{intrep-2} we obtain 
\begin{align}\label{expansion-sum}
\begin{split} 
R_N(z) = 2\pi\lambda^{\frac32} &\, \frac{\sin^2\pi z}{\pi^2}\,\left\{\int_{-\infty}^0 \int_{-\infty}^0 \,e^{-2\pi\lambda tu} \,G_\lambda(z-t) \,\vthp_N(u,\lambda) \,\du \,\dt\right.\\ 
& - \int_0^\infty \int_0^\infty e^{-2\pi\lambda tu} \,G_\lambda(z-t) \,\vthp_N(u,\lambda) \,\du \,\dt\\
 &+ \int_{-\infty}^\infty \int_{0}^\infty e^{-2\pi\lambda t u} \,G_\lambda(z-t) \,G_\lambda(u)\,\vthp_N(0,\lambda) \,\du \,\dt\\ 
& -  \int_{-\infty}^0 \int_{-\infty}^0 e^{-2\pi\lambda t u} \,G_\lambda(z-t) \sum_{n=-N}^0 G_\lambda'(n-u) \,\du \,\dt\\ 
& \left.- \int_{-\infty}^0 \int_0^\infty  e^{-2\pi\lambda t u} \,G_\lambda(z-t)\sum_{n=-N}^0 G_\lambda'(n-u) \,\du \,\dt \right\}.
\end{split}
\end{align}
We note that
\[
\sum_{n=-N}^0 G_\lambda'(n-u) = \sum_{n=0}^N G_\lambda'(-n-u)= \vthp_N(-u,\lambda) -G_\lambda'(u)\,, 
\]
which gives rise to a representation
\begin{align*}
R_N(z)= 2\pi\lambda^{\frac32}\,\frac{\sin^2\pi z}{\pi^2}\, \sum_{k=1}^6 S_{k,N}(z),
\end{align*}
where the functions $S_{k,N}$ are defined by replacing the series in \eqref{def-of-theIs} by their respective partial sums. Note in particular that the third integral in \eqref{expansion-sum} equals $S_{3,N}+S_{4,N}$.

\medskip

It remains to justify the change of integration and limit as $N\to\infty$. For $S_{1,N}$, $S_{3,N}$, and $S_{4,N}$, this is straightforward; we omit the calculations. To apply dominated convergence in $S_{6,N}$ we note that for $t<0$ and $u>0$ by \eqref{II-theta-N-ub}
\begin{align*}
\big| e^{-2\pi\lambda t u}& G_\lambda(z-t)\big\{ \vthp_N(-u,\lambda) - G_\lambda'(u) \big\}\big|\\
&\le c \,(|u|+1|) \, e^{-2\pi\lambda tu} \, e^{-\pi\lambda(\Re z-t)^2} \, G_{\lambda}(u)\\
&= c \, (|u|+1) \, e^{-\pi\lambda \Re z^2} \, e^{2\pi\lambda t\Re z } \,G_\lambda(t+u)\,,
\end{align*}
with a constant $c= c(\lambda, z)>0$, and since $\Re z>0$, the latter expression is in $L^1((-\infty,0]\times [0,\infty))$, which finishes the proof for $S_{6,N}$. 

\medskip

To deal with $S_{2,N}$ we use integration by parts
\begin{align}\label{int-parts-I}
\begin{split}
&S_{2,N}(z) =  - \int_0^\infty G_\lambda(z-t) \,\sum_{n=1}^N  \,\int_0^\infty  e^{-2\pi\lambda tu} \, G_{\lambda}'(u-n)\,\du \,\dt\\
& = - \int_0^\infty G_\lambda(z-t) \,\sum_{n=1}^N  \left\{-G_{\lambda}(n)  + 2\pi \lambda t \int_0^{\infty}  \,e^{-2\pi\lambda t  u}\, G_{\lambda}(u-n)\,\du     \right\}\dt\,,
\end{split}
\end{align}
and since 
\begin{equation*}
\sum_{n=1}^N G_{\lambda}(u-n) \leq C
\end{equation*}
for all $u \in \R$ and all $N$, we can pass to the limit as $N\to\infty$ in \eqref{int-parts-I} and use integration by parts again to get
\begin{align*}
S_{2}(z) & = -\int_0^\infty G_\lambda(z-t) \int_0^{\infty} e^{-2\pi\lambda t  u}\, \vthp(u,\lambda)\,\du \, \dt.
\end{align*}
With a similar argument, using integration by parts twice we show that 
\begin{align*}
S_{5}(z) & = \int_{-\infty}^0 \int_{-\infty}^0 e^{-2\pi\lambda t u} G_\lambda(z-t) \big\{G_\lambda'(u) - \vthp(-u,\lambda)\big\} \,\du \,\dt\,,
\end{align*}
which finishes the proof.
\end{proof}

\begin{lemma}\label{ext-min-nonpositivity}
 Let $x>0$. Then
\begin{align*}
R(x)\le 0.
\end{align*}
\end{lemma}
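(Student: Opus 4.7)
Plan: Lemma \ref{intrep-R} represents $R(z) = 2\pi\lambda^{3/2}(\sin^2\pi z/\pi^2)\sum_{k=1}^6 S_k(z)$ for $\Re z>0$. Since $\sin^2\pi x\ge 0$, it suffices to show $\sum_{k=1}^6 S_k(x)\le 0$ for $x>0$. Mirroring the grouping strategy from Lemma \ref{non-pos-I-2}, I would form $W_1:=S_1+S_5$, $W_2:=S_2+S_4$, and $W_3:=S_3+S_6$, so that $\sum_{k=1}^6 S_k=W_1+W_2+W_3$. Using the identity \eqref{vthp-to-th3}, the inner integrand of $W_1$ (over $u<0$) collapses to $\lambda^{-1/2}\theta_3'(u,i\lambda^{-1})$, the inner integrand of $W_3$ (over $u>0$) becomes $\phi(u):=\vthp(0,\lambda)G_\lambda(u)+G_\lambda'(u)-\vthp(-u,\lambda)$, and the inner integrand of $W_2$ (over $u>0$) becomes $\psi(u):=\vthp(0,\lambda)G_\lambda(u)-\vthp(u,\lambda)=-\lambda^{-1/2}\theta_3'(u,i\lambda^{-1})+\phi(u)$.

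The non-positivity $W_3(x)\le 0$ follows by applying \eqref{vartheta2} at the point $-u<0$, together with the evenness of $G_\lambda$ and the oddness of $G_\lambda'$: this produces $\phi(u)\le G_\lambda'(u)/2\le 0$ for every $u>0$, whence the pointwise-positive outer weight $G_\lambda(x-t)\,e^{-2\pi\lambda tu}$ delivers $W_3(x)\le 0$. The same bound controls the $\phi$-part of $W_2$. For the residual $\theta_3'$-contributions coming from $W_1$ and from the $-\lambda^{-1/2}\theta_3'$-summand of $W_2$, I would apply the substitutions $(t,u)\mapsto(-t,-u)$ in $W_1$ and the oddness of $\theta_3'$ to merge them into a single integral on $(t,u)\in(0,\infty)^2$ with outer weight $G_\lambda(x+t)+G_\lambda(x-t)$. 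The inner $u$-integral is then handled by invoking the $1$-periodicity of $\theta_3'$ and the antisymmetry $\theta_3'(1-u,i\lambda^{-1})=-\theta_3'(u,i\lambda^{-1})$ (a consequence of the $1$-periodicity and evenness of $\theta_3$), together with the sign information $\theta_3'\le 0$ on $[0,1/2]$ from Lemma \ref{theta2}, via a geometric-series resummation analogous to \eqref{12toinfty} that reduces the analysis to a single interval $[0,1]$.

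The principal obstacle, absent in the two-sided case, is that $\theta_3'(\cdot,i\lambda^{-1})$ changes sign on a period interval (negative on $[0,1/2]$, positive on $[1/2,1]$), so the $\theta_3'$-piece alone does not carry a definite sign — unlike $\theta_1$ in Lemma \ref{non-pos-I-2}, whose positivity on $[0,1]$ together with antiperiod $1$ immediately gave $W_1\leq 0$ there. The decisive step in the present proof is to calibrate the auxiliary bound $\phi(u)\le G_\lambda'(u)/2$ from \eqref{vartheta2} against the residual $\theta_3'$-integral so that the strictly negative contributions from the $\phi$-terms distributed over $W_3$ and the $\phi$-part of $W_2$ absorb the residual positivity from the combined $\theta_3'$-integral. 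Once this dovetailing is verified, we obtain $W_1+W_2+W_3\le 0$, and hence $R(x)\le 0$ for all $x>0$, completing the proof.
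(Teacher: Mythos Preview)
Your grouping $W_1=S_1+S_5$, $W_2=S_2+S_4$, $W_3=S_3+S_6$ matches the paper, and your treatment of $W_3$ via $\phi(u)\le G_\lambda'(u)/2\le 0$ (from \eqref{vartheta2}) is correct. The gap is in the ``dovetailing'' step: you assert that the negative $\phi$-contributions from $W_3$ and the $\phi$-part of $W_2$ absorb the positive residual from the combined $\theta_3'$-integral, but you do not carry this out, and the tools you list --- \eqref{vartheta2}, Lemma~\ref{theta2}, and periodicity --- are not enough by themselves. The missing ingredient is inequality \eqref{vartheta3}, namely $\vthp(0,\lambda)\,G_\lambda(u)\le \vthp(u,\lambda)$ on $[0,\tfrac12]$. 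Without it there is no mechanism to make $\psi(u)=\vthp(0,\lambda)G_\lambda(u)-\vthp(u,\lambda)$ non-positive on $[0,\tfrac12]$, which is precisely where the $-\lambda^{-1/2}\theta_3'$-piece is nonnegative and must be cancelled.

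Your idea of merging the $\theta_3'$-parts of $W_1$ and $W_2$ into a single integral with weight $G_\lambda(x+t)+G_\lambda(x-t)$ is also a detour. After that merge the residual $\phi$-pieces carry \emph{different} exponential weights ($e^{-2\pi\lambda tu}$ in $W_2^{\phi}$ versus $e^{+2\pi\lambda tu}$ in $W_3$ after $t\mapsto -t$), so there is no clean pointwise comparison with the symmetric $\theta_3'$-term. The paper instead keeps $W_2$ intact and handles it by splitting the $u$-integral at $u=\tfrac12$: on $[0,\tfrac12]$ apply \eqref{vartheta3} directly; on $[\tfrac12,\infty)$ rewrite via \eqref{vthp-to-th3}, apply \eqref{vartheta2} to the $\phi$-part, and dispatch the remaining $\theta_3'$-piece with the periodic resummation \eqref{laplace-trick-1}. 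Separately, the paper bounds $W_1$ by a $\theta_3'$-integral restricted to $u\in(0,\tfrac12)$ (via \eqref{laplace-trick-2}) with weight $e^{2\pi\lambda tu}$, bounds $W_3$ by restricting to $u\in(0,\tfrac12)$ via \eqref{vartheta2}, and then combines the two using the elementary comparison $e^{2\pi\lambda tu}\le e^{-2\pi\lambda tu}$ for $t<0$, $u>0$, which collapses the sum into $\int_{-\infty}^{0}\int_0^{1/2}e^{-2\pi\lambda tu}G_\lambda(x-t)\,\psi(u)\,\du\,\dt\le 0$, again by \eqref{vartheta3}. So \eqref{vartheta3} is used twice and is the linchpin you are missing.
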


\begin{proof} We combine the integrals in \eqref{def-of-theIs} by integration region. We note that
\begin{align}\label{repi2i4}
S_2(z) + S_4(z)&= \int_0^\infty \int_0^\infty e^{-2\pi\lambda tu} G_\lambda(z-t) \big\{\vthp(0,\lambda) G_\lambda(u)-\vthp(u,\lambda) \big\} \,\du \,\dt.
\end{align}
We split the integral over $u$ at $u=1/2$ and replace $\vthp(u,\lambda)$ for $u\ge 1/2$ using \eqref{vthp-to-th3} to arrive at
\begin{align}\label{I2I4}
\begin{split}
S_2(z) & + S_4(z)= \int_0^\infty \int_0^{\frac12} e^{-2\pi\lambda tu} \,G_\lambda(z-t)\, \big\{\vthp(0,\lambda) G_\lambda(u)-\vthp(u,\lambda) \big\} \,\du \,\dt\\
& +\int_0^\infty \int_{\frac12}^\infty e^{-2\pi\lambda tu} \,G_\lambda(z-t)\,\big\{\vthp(0,\lambda) G_\lambda(u) - \vthp(-u,\lambda) +G_\lambda'(u)\big\} \,\du \,\dt\\
 & - 2 \lambda^{-\frac12}\int_0^\infty \,G_\lambda(z-t) \int_0^{\frac12} \frac{\sinh(2\pi\lambda tu)}{1-e^{2\pi\lambda t}} \,\theta_3'\big(u,i\lambda^{-1}\big) \,\du \,\dt,
\end{split}
\end{align}
where we have used in \eqref{I2I4} the fact that, for positive $t$, 
\begin{align}\label{laplace-trick-1}
\begin{split}
\int_{\frac12}^\infty e^{-2\pi\lambda tu} \,\theta_3'\big(u,i\lambda^{-1}\big) \,\du &=
\sum_{n=1}^\infty e^{-2\pi\lambda tn} \int_{-\frac12}^{\frac12} e^{-2\pi\lambda tu} \, \theta_3'\big(u,i\lambda^{-1}\big) \,\du\\
&= 2 \int_0^{\frac12} \frac{\sinh(2\pi\lambda tu)}{1-e^{2\pi\lambda t}} \,\theta_3'\big(u,i\lambda^{-1}\big) \,\du,\\
\end{split}
\end{align}
since $u \mapsto \theta_3'\big(u,i\lambda^{-1}\big) $ is odd and 1-periodic. Returning to \eqref{I2I4}, an application of \eqref{vartheta3} in the first integral, \eqref{vartheta2} in the second integral, and Lemma \ref{theta2} in the third integral implies that for $x>0$ we have
\[
S_2(x) + S_4(x)\le 0.
\]

\medskip

We show next that $S_1+S_3+S_5+S_6\le 0$. We estimate first $S_1+S_5$ and $S_3+S_6$ separately. We have with an application of \eqref{vthp-to-th3}
\begin{align}
S_1(z) + S_5(z)&= \int_{-\infty}^0\int_{-\infty}^0 e^{-2\pi\lambda tu} G_\lambda(z-t) \big\{\vthp(u,\lambda) + G_\lambda'(u) -\vthp(-u,\lambda)\big\}\,\du \,\dt \nonumber \\
&= \lambda^{-\frac12} \int_{-\infty}^0\int_{-\infty}^0 e^{-2\pi\lambda tu} \,G_\lambda(z-t)\,\theta_3'\big(u,i\lambda^{-1}\big) \,\du \,\dt \nonumber \\
\begin{split} \label{repi1i5}
&= \lambda^{-\frac12} \int_{-\infty}^0\int_{-\frac12}^0 e^{-2\pi\lambda tu} \,G_\lambda(z-t)\,\theta_3'\big(u,i\lambda^{-1}\big) \,\du \,\dt \\
& \ \ \ + 2\lambda^{-\frac12} \int_{-\infty}^0 G_{\lambda}(z-t) \int_0^{\frac12} \frac{\sinh(2\pi\lambda tu)}{1-e^{-2\pi\lambda t}} \, \theta_3'\big(u,i\lambda^{-1}\big)\,\du\,\dt,
\end{split}
\end{align}
where we have used the identity, for negative $t$,
\begin{align}\label{laplace-trick-2}
\begin{split}
\int_{-\infty}^{-\frac12} e^{-2\pi\lambda tu} \,\theta_3'\big(u,i\lambda^{-1}\big) \,\du &=
\sum_{n=1}^\infty e^{2\pi\lambda tn} \int_{-\frac12}^{\frac12} e^{-2\pi\lambda tu} \, \theta_3'\big(u,i\lambda^{-1}\big) \,\du\\
&= 2 \int_0^{\frac12} \frac{\sinh(2\pi\lambda tu)}{1-e^{-2\pi\lambda t}} \,\theta_3'\big(u,i\lambda^{-1}\big) \,\du.\\
\end{split}
\end{align}
Since $(1-e^{-2\pi\lambda t})^{-1} \sinh(2\pi\lambda tu)\ge 0$ for $t<0$ and $u>0$, an application of  Lemma \ref{theta2} in the last integral of \eqref{repi1i5} implies for $x>0$ that
\begin{align}\label{S15-estimate}
\begin{split}
S_1(x) + S_5(x)&\le \lambda^{-\frac12} \int_{-\infty}^0\int_{-\frac12}^0 e^{-2\pi\lambda tu} \,G_\lambda(x-t)\, \theta_3'\big(u,i\lambda^{-1}\big) \,\du \,\dt\\
&= -\lambda^{-\frac12} \int_{-\infty}^0 \int_0^{\frac12} e^{2\pi\lambda tu} \,G_\lambda(x-t)\, \theta_3'\big(u,i\lambda^{-1}\big) \,\du \,\dt.
\end{split}
\end{align}
The expression on the right-hand side of \eqref{S15-estimate} is non-negative, but it turns out to cancel with $S_3+S_6$. We note that
\begin{align}\label{repi3i6}
\begin{split}
 S_3(z) + S_6(z)= \int_{-\infty}^0 \int_0^\infty  e^{-2\pi\lambda tu} \, G_\lambda(z-t)\, \big\{& \vthp(0,\lambda)  G_\lambda(u) \\
&+ G_\lambda'(u) -\vthp(-u,\lambda)\big\}\,\du\,\dt.
\end{split}
\end{align}
We obtain from \eqref{vartheta2} with $-u<0$ the inequality
\[
\vthp(0,\lambda) G_\lambda(u) + G_\lambda'(u) -\vthp(-u,\lambda) \le \frac{G_\lambda'(-u)}{2} +G_\lambda'(u) = \frac{G_\lambda'(u)}{2}\le 0
\]
and applying this for $x>0$ and $u\ge 1/2$ gives the upper bound
\begin{align}\label{S36-estimate}
S_3(x) + S_6(x)\le \int_{-\infty}^0 \int_0^{\frac12} e^{-2\pi\lambda tu} \,G_\lambda(x-t)\,\big\{&\vthp(0,\lambda)  G_\lambda(u) \\
& + G_\lambda'(u) -\vthp(-u,\lambda)\big\}\,\du\,\dt.\nonumber
\end{align}
We combine now \eqref{S15-estimate} and \eqref{S36-estimate}. For $t<0$ and $u>0$ we have $e^{2\pi\lambda tu} \le e^{-2\pi\lambda tu}$. An application of Lemma \ref{theta2}, \eqref{vthp-to-th3} and \eqref{vartheta3} gives
\begin{align*}
  \big\{\vthp(0,\lambda) &G_\lambda(u) + G_\lambda'(u) -\vthp(-u,\lambda)\big\}e^{-2\pi\lambda tu}- \lambda^{-\frac12}  \theta_3'\big(u,i\lambda^{-1}\big)e^{2\pi\lambda tu}\\
  &\le e^{-2\pi\lambda tu}\big\{ \vthp(0,\lambda) G_\lambda(u) + G_\lambda'(u) -\vthp(-u,\lambda)-\lambda^{-\frac12}  \theta_3'\big(u,i\lambda^{-1}\big)\big\}\\
 &= e^{-2\pi\lambda tu}\big\{\vthp(0,\lambda) G_\lambda(u) -\vthp(u,\lambda)\big\}\le 0,
\end{align*}
and hence
\[
S_1(x)+S_3(x)+S_5(x)+S_6(x)\le 0
\]
for $x>0$.
\end{proof}

The uniqueness part follows from classical arguments in this theory. Suppose that $L: \C \to \C$ is an entire function of exponential type at most $2\pi$, real and integrable on $\R$ such that $\gplus(x) \geq L(x)$, for all $x \in \R$. If equality happens on \eqref{pf-thm2-2} we must have
\begin{equation*}
L(n) = \gplus(n) = \lplus(n),
\end{equation*}
for all $n \in \Z\backslash\{0\}$, and $L(0) = 0 = \lplus(0)$. Since $L$ minorizes $\gplus$ this implies also
\begin{equation*}
L'(n) = \big(G_{\lambda}^{+}\big)'(n) = \big(L_{\lambda}^{+}\big)'(n),
\end{equation*}
for all $n \in \Z\backslash\{0\}$. Therefore the entire function
$$ z \mapsto \lplus(z) - L(z)$$
has exponential type at most $2\pi$, vanishes at each point of $\Z$ and its derivative vanishes at each point of $\Z/\{0\}$. An application of \cite[Lemma 4]{GV} shows that this function must be identically zero, and thus $L = \lplus$.

\medskip

\textsl{Proof of Theorem \ref{thm-one-sided} (ii).} The proof of the minimal integral and the uniqueness statement of (ii) follow by analogous arguments as for part (i) and are omitted. It remains to show that 
$$\mplus(x) \geq \gplus(x)$$
for all $x \in \R$. For this we define the difference function
$$ T(z)  = \mplus(z) - \gplus(z)$$
for $z\in\C$ and the desired inequality follows from the two results below.

\begin{lemma} The inequality
\[
T(x)\ge 0
\]
holds for all $x<0$.
\end{lemma}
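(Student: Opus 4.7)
The plan is to adapt the integral-representation argument of the preceding lemma. Since $\gplus(x) = 0$ for $x < 0$, we have $T(x) = \mplus(x) = R(x) + \frac{\sin^2 \pi x}{\pi^2 x^2}$, and the task reduces to rewriting $T(z)$ as a single integral whose integrand can be shown to be non-negative on the integration region $(-\infty,\infty) \times (-\infty, 0]$.

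First I would work with the partial sums $T_N(z) = R_N(z) + \frac{\sin^2\pi z}{\pi^2 z^2}$. The summands indexed by $n \ge 1$ are processed exactly as in the preceding lemma via the $w$-derivative of \eqref{intrep-1} at $w = n$ together with \eqref{intrep-1} at $w = 0$, producing the bracket $\vthp_N(u, \lambda) - \vthp_N(0, \lambda) G_\lambda(u)$. The new ingredient is the term $1/z^2$: differentiating \eqref{intrep-1} in $w$ and setting $w = 0$ (and noting $G_\lambda'(0) = 0$, $G_\lambda(0) = 1$) gives
\begin{align*}
\frac{1}{z^2} = -2\pi\lambda^{\frac32}\int_{-\infty}^\infty\int_{-\infty}^0 e^{-2\pi\lambda tu}\, G_\lambda(z-t)\, G_\lambda'(-u)\,\du\,\dt,
\end{align*}
and the oddness $G_\lambda'(-u) = -G_\lambda'(u)$ contributes the extra summand $+G_\lambda'(u)$ inside the bracket. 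Applying dominated convergence as before (using Lemma \ref{ub-gauss}) yields, for $\Re z < 0$,
\begin{align*}
T(z) = 2\pi\lambda^{\frac32}\,\frac{\sin^2\pi z}{\pi^2}\int_{-\infty}^\infty \int_{-\infty}^0 e^{-2\pi\lambda tu}\, G_\lambda(z-t) \big\{\vthp(u,\lambda) - \vthp(0,\lambda) G_\lambda(u) + G_\lambda'(u)\big\}\,\du\,\dt.
\end{align*}

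To finish, I invoke inequality \eqref{vartheta2}, which for $u < 0$ reads $\vthp(u,\lambda) - \vthp(0,\lambda) G_\lambda(u) \ge -G_\lambda'(u)/2$. Since $G_\lambda'(u) = -2\pi\lambda u G_\lambda(u) > 0$ for $u < 0$, the bracket in the integrand is bounded below by $G_\lambda'(u)/2 \ge 0$. Combined with the non-negativity of $\sin^2\pi x$, $G_\lambda(x-t)$, and $e^{-2\pi\lambda tu}$ on the integration domain, this establishes $T(x) \ge 0$ for $x < 0$. The only real subtlety is keeping signs straight in the new $1/z^2$ contribution; the dominated-convergence justifications transfer verbatim from the preceding lemma via the growth bounds in Lemma \ref{ub-gauss}.
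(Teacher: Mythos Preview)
Your proposal is correct and follows essentially the same route as the paper: you combine the integral representation \eqref{Jm-rep} for $R(z)$ with the representation of $1/z^2$ obtained by differentiating \eqref{intrep-1} in $w$ at $w=0$, arrive at the same single integral with bracket $\vthp(u,\lambda)-\vthp(0,\lambda)G_\lambda(u)+G_\lambda'(u)$, and conclude via \eqref{vartheta2}. The only cosmetic difference is that the paper cites \eqref{Jm-rep} directly rather than redoing the partial-sum computation.
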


\begin{proof} We note the identity
\begin{align}\label{rm-to-rp}
T(z) = R(z) +\frac{\sin^2\pi z}{\pi^2 z^2}.
\end{align}
Differentiation of \eqref{intrep-1} with respect to $w$ and setting $w=0$ gives for $\Re z<0$
\begin{align}\label{int1aw0}
\frac{1}{z^2} = 2\pi\lambda^{\frac32} \int_{-\infty}^\infty \int_{-\infty}^0 e^{-2\pi\lambda tu} \,G_\lambda(z-t) \,G_\lambda'(u) \,\du \,\dt.
\end{align}
Plugging \eqref{int1aw0} and \eqref{Jm-rep} into \eqref{rm-to-rp} gives for all $z$ with $\Re z <0$ the representation
\begin{align*}
\begin{split}
T(z) = 2\pi\lambda^{\frac32} \, \frac{\sin^2\pi z}{\pi^2}\,\int_{-\infty}^\infty \int_{-\infty}^0 &e^{-2\pi\lambda tu} G_\lambda(z-t) \\
& \big\{\vthp(u,\lambda) + G_\lambda'(u) -\vthp(0,\lambda) G_\lambda(u) \big\} \,\du \,\dt.
\end{split}
\end{align*} 
Inequality \eqref{vartheta2} implies
\[
\vthp(u,\lambda) -\vthp(0,\lambda) G_\lambda(u)+G_\lambda'(u) \ge -\frac12 G_\lambda'(u)+G_\lambda'(u)\ge 0,
\]
which proves the lemma.
\end{proof}

\begin{lemma} The inequality
\begin{align*}
T(x) \ge 0
\end{align*}
holds for all $x>0$.
\end{lemma}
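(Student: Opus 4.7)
The plan is to adapt the integral-representation strategy of Lemma \ref{intrep-R} and Lemma \ref{ext-min-nonpositivity} by absorbing the extra term $\tfrac{\sin^2\pi z}{\pi^2 z^2}$ into the representation for $R(z)$. Differentiating \eqref{intrep-2} in $w$ and setting $w=0$ yields, for $\Re z > 0$,
\[
\frac{1}{z^2}=-2\pi\lambda^{\frac32}\int_{-\infty}^\infty\!\int_0^\infty e^{-2\pi\lambda tu}\,G_\lambda(z-t)\,G'_\lambda(u)\,\du\,\dt,
\]
and combining this with the representation of Lemma \ref{intrep-R} produces a corresponding representation for $T(z)$; the passage to the limit in the partial sums is justified exactly as in Lemma \ref{intrep-R}. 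The key feature is that, once split at $t=0$, the subtracted integral cancels the $G'_\lambda(u)$ appearing in $S_6$. Specifically,
\begin{align*}
& S_3(z)+S_6(z)-\int_{-\infty}^0\!\int_0^\infty e^{-2\pi\lambda tu}\,G_\lambda(z-t)\,G'_\lambda(u)\,\du\,\dt\\
& \qquad = \int_{-\infty}^0\!\int_0^\infty e^{-2\pi\lambda tu}\,G_\lambda(z-t)\bigl\{\vthp(0,\lambda)G_\lambda(u)-\vthp(-u,\lambda)\bigr\}\,\du\,\dt,
\end{align*}
which is non-negative for $x>0$ by \eqref{vartheta-ineq1} applied at $-u$.

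For the remaining contribution I would first use \eqref{vthp-to-th3} in $S_2(x)+S_4(x)$ minus the $t>0$ half of the subtracted integral to trade $\vthp(u,\lambda)+G'_\lambda(u)$ for $\vthp(-u,\lambda)+\lambda^{-\frac12}\theta_3'(u,i\lambda^{-1})$. Next, starting from $S_1(x)+S_5(x)=\lambda^{-\frac12}\int_{-\infty}^0\!\int_{-\infty}^0 e^{-2\pi\lambda tu}G_\lambda(x-t)\theta_3'(u,i\lambda^{-1})\du\,\dt$, the substitution $(t,u)\mapsto(-t,-u)$ combined with the oddness of $\theta_3'(\cdot,i\lambda^{-1})$ transports that integral to $(0,\infty)^2$ with $G_\lambda(x+t)$ in place of $G_\lambda(x-t)$. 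After collecting, the remaining block becomes
\begin{align*}
& \int_0^\infty\!\int_0^\infty e^{-2\pi\lambda tu}G_\lambda(x-t)\bigl\{\vthp(0,\lambda)G_\lambda(u)-\vthp(-u,\lambda)\bigr\}\du\,\dt\\
& \qquad +\int_0^\infty\bigl[G_\lambda(x+t)+G_\lambda(x-t)\bigr]P(t)\,\dt,
\end{align*}
where $P(t):=-\lambda^{-\frac12}\int_0^\infty e^{-2\pi\lambda tu}\theta_3'(u,i\lambda^{-1})\,\du$. The first summand is non-negative by \eqref{vartheta-ineq1}, so the problem reduces to proving $P(t)\ge 0$ for all $t>0$.

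The hardest step is this last positivity. The strategy is to fold the tail $\int_{1/2}^\infty$ of $P(t)$ using \eqref{laplace-trick-1} to obtain
\[
P(t)=-\lambda^{-\frac12}\int_0^{1/2}\Bigl\{e^{-2\pi\lambda tu}+\frac{2\sinh(2\pi\lambda tu)}{1-e^{2\pi\lambda t}}\Bigr\}\theta_3'(u,i\lambda^{-1})\,\du.
\]
A direct algebraic rearrangement shows that the bracket equals $\dfrac{e^{2\pi\lambda tu}-e^{2\pi\lambda t(1-u)}}{1-e^{2\pi\lambda t}}$, and for $t>0$ and $0<u<1/2$ both the numerator (since $u<1-u$) and the denominator are negative, so the bracket is non-negative. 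Combined with $\theta_3'(u,i\lambda^{-1})\le 0$ on $[0,1/2]$ from Lemma \ref{theta2}, the integrand is non-positive and $P(t)\ge 0$ follows, completing the proof. I expect the principal difficulty to be precisely this recognition of a non-negative kernel, which is what makes transparent the otherwise delicate cancellation between the $t<0$ and $t>0$ halves of the $\theta_3'$ contribution.
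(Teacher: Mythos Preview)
Your proof is correct and follows essentially the same route as the paper: the decomposition you obtain is exactly the paper's $V_1,V_2,V_3$, with $V_3$ handled identically via \eqref{vartheta-ineq1}, and your key positivity $P(t)\ge 0$ is precisely the Laplace-fold computation the paper invokes as ``analogous to \eqref{laplace-trick-1}'' for $V_1$ and ``the rationale in \eqref{laplace-trick-2}'' for $V_2$. The only organizational difference is that you merge $V_1$ and $V_2$ via the substitution $(t,u)\mapsto(-t,-u)$ and extract the common factor $P(t)$, whereas the paper treats them separately; your explicit kernel identity $e^{-2\pi\lambda tu}+\tfrac{2\sinh(2\pi\lambda tu)}{1-e^{2\pi\lambda t}}=\tfrac{e^{2\pi\lambda tu}-e^{2\pi\lambda t(1-u)}}{1-e^{2\pi\lambda t}}\ge 0$ makes this step somewhat more transparent than the paper's citation-style argument.
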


\begin{proof} Differentiation of \eqref{intrep-2} with respect to $w$ and setting $w=0$ gives for $\Re z>0$ the representation
\begin{align*}
\frac{1}{z^2} =-2\pi\lambda^{\frac32} \int_{-\infty}^\infty \int_0^\infty e^{-2\pi\lambda tu}\, G_\lambda(z-t)  \,G_\lambda'(u) \,\du \,\dt.
\end{align*}
Identities \eqref{def-of-theIs}, \eqref{repi2i4}, \eqref{repi1i5} and \eqref{repi3i6} lead to 
\begin{equation*}
T(z) = 2\pi\lambda^{\frac32} \, \frac{\sin^2\pi z}{\pi^2}\, (V_1+V_2+V_3),
\end{equation*}
where
\begin{align*}
\begin{split}
V_1(z) &= \int_0^\infty \int_0^\infty e^{-2\pi\lambda tu} \,G_\lambda(z-t) \big\{\vthp(0,\lambda) G_\lambda(u) -\vthp(u,\lambda) -G_\lambda'(u) \big\}\,\du \,\dt,\\
V_2(z) &= \int_{-\infty}^0 \int_{-\infty}^0 e^{-2\pi\lambda tu} \,G_\lambda(z-t)  \,\big\{\lambda^{-\frac12} \theta_3'\big(u,i\lambda^{-1}\big)\big\} \,\du \,\dt,\\
V_3(z) &= \int_{-\infty}^0 \int_0^\infty e^{-2\pi\lambda tu} \,G_\lambda(z-t) \,\big\{\vthp(0,\lambda) G_\lambda(u) -\vthp(-u,\lambda)\big\} \,\du \,\dt.
\end{split}
\end{align*}
An application of \eqref{vthp-to-th3} and \eqref{vartheta-ineq1} for $-u<0$ gives
\begin{align}\label{th3-add}
\begin{split}
\vthp(0,\lambda) G_\lambda(u) -\big\{\vthp&(u,\lambda) +G_\lambda'(u)\big\} \\
&= \vthp(0,\lambda) G_\lambda(u) - \big\{\lambda^{-\frac12} \theta_3'\big(u,i\lambda^{-1}\big) +\vthp(-u,\lambda)\big\}\\
&\ge -\lambda^{-\frac12} \theta_3'\big(u,i\lambda^{-1}\big).
\end{split}
\end{align}
Plugging \eqref{th3-add} back into $V_1(x)$ and performing a calculation analogous to \eqref {laplace-trick-1} leads to $V_1(x)\ge 0$ for all $x >0$. In a similar way, using the rationale in  \eqref {laplace-trick-2}, we arrive at $V_2(x)\ge 0$ for $x>0$. Finally, an application of \eqref{thp-ineq-eq} with $-u<0$ implies that $V_3(x)\ge 0$ for $x>0$.
\end{proof}

\section{Asymptotic analysis}\label{Asy}%%%%%%%%%%%%%%%%%%%%%%%%%%%%%%%%%%%%%%%%%%%%%%%%%%%%%%%%%%%%%%%%%%%%%%%%%%%%%%%%%%

We are now interested in understanding the set of admissible non-negative Borel measures $\nu$ on $[0,\infty)$ against which we can integrate the minimal integral appearing in Theorem \ref{truncated-ba-theorem}. We define $H$ by
\begin{equation*}%\label{asymp-1}
H(\lambda) = \frac{1}{\pi \lambda} \int_0^1 \theta_1\left(0, i \lambda ^{-1}\big(1-y^2\big) \right) \dy.
\end{equation*}
and provide in this section a brief asymptotic analysis of this expression.

\begin{lemma}\label{asymp-lemma}
The function $H$ satisfies
\begin{align}\label{asymp-lemma-1}
\lim_{\lambda \to \infty} \lambda^{1/2} H(\lambda) =\frac{1}{2}
\end{align}
and 
\begin{align}\label{asymp-lemma-2}
\lim_{\lambda \to 0} H(\lambda) = \frac{1}{2}.
\end{align}
\end{lemma}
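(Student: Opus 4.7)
My plan is to treat the two limits using two different series representations of $\theta_1\bigl(0, i\lambda^{-1}(1-y^2)\bigr)$, both already available: the defining series \eqref{th1} and its Jacobi-transformed counterpart \eqref{th1-series}. Each is efficient in a different regime: the transformed series for $\lambda \to \infty$ (where the $n = 0$ term dominates), and the direct series for $\lambda \to 0$ (where every $n$ contributes).

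For \eqref{asymp-lemma-1}, I would apply \eqref{th1-series} at $z=0$ with parameter $\mu := \lambda/(1-y^2)$, so that $i\mu^{-1} = i\lambda^{-1}(1-y^2)$ and
\begin{equation*}
\theta_1\bigl(0, i\lambda^{-1}(1-y^2)\bigr) = \left(\frac{\lambda}{1-y^2}\right)^{\!1/2} \sum_{n \in \Z} (-1)^n e^{-\pi \lambda n^2/(1-y^2)}.
\end{equation*}
Substituting into the definition of $H$ gives
\begin{equation*}
\sqrt{\lambda}\, H(\lambda) = \frac{1}{\pi}\int_0^1 (1-y^2)^{-1/2}\, \sum_{n \in \Z} (-1)^n e^{-\pi\lambda n^2/(1-y^2)}\,\dy.
\end{equation*}
For $\lambda \ge 1$ the inner series is uniformly bounded (the tail is dominated by $\sum_{n\ge1} e^{-\pi n^2}$) and converges pointwise on $[0,1)$ to $1$. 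Since $(1-y^2)^{-1/2}$ is integrable on $[0,1]$, dominated convergence yields \eqref{asymp-lemma-1} via $\int_0^1(1-y^2)^{-1/2}\,\dy = \pi/2$.

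For \eqref{asymp-lemma-2} I instead use the defining series \eqref{th1}, which produces $\theta_1\bigl(0, i\lambda^{-1}(1-y^2)\bigr) = \sum_n e^{-\pi\lambda^{-1}(1-y^2)(n+1/2)^2}$. All summands being positive, Fubini lets me write $H(\lambda) = (\pi\lambda)^{-1}\sum_n I_n(\lambda)$, where $I_n(\lambda) = \int_0^1 e^{-\pi\lambda^{-1}(1-y^2)(n+1/2)^2}\,\dy$. Splitting at $y = 1/2$, the contribution from $[0,1/2]$ is exponentially small; on $[1/2,1]$ the substitution $u = 1-y^2$ together with the expansion $(2\sqrt{1-u})^{-1} = 1/2 + O(u)$ on $[0,3/4]$ gives
\begin{equation*}
I_n(\lambda) = \frac{\lambda}{2\pi(n+\tfrac12)^2} + O\!\left(\frac{\lambda^2}{(n+\tfrac12)^4}\right).
\end{equation*}
Summing over $n$ and invoking the classical partial-fractions identity $\sum_{n\in\Z}(n+\tfrac12)^{-2} = \pi^2$ then produces $H(\lambda) = \tfrac12 + O(\lambda)$, proving \eqref{asymp-lemma-2}.

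The main obstacle is the small-$\lambda$ asymptotic. The tempting shortcut of approximating $\theta_1(0,iS)$ by its two dominant terms $2e^{-\pi S/4}$ as $S\to\infty$ retains only the $n = 0,-1$ contributions and produces the incorrect limit $4/\pi^2$. The correct value $1/2$ only emerges after recognizing that every index $n$ contributes to $I_n(\lambda)$ at the same order $\lambda$, and that the precise numerical constant arises from the collective identity $\sum(n+\tfrac12)^{-2}=\pi^2$ cancelling the $\pi^2$ appearing in $\lambda/(2\pi(n+1/2)^2)$ against the overall prefactor $1/(\pi\lambda)$. This forces the asymptotic expansion of $I_n(\lambda)$ to be carried out with a remainder that remains summable in $n$, which is the technical heart of the argument.
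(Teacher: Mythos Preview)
Your proof is correct, and for \eqref{asymp-lemma-1} it is essentially identical to the paper's: both invoke the transformation formula \eqref{th1-series}, isolate the dominant $n=0$ term, and apply dominated convergence against the integrable weight $(1-y^2)^{-1/2}$.

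For \eqref{asymp-lemma-2} the two arguments share the same skeleton---both expand via the defining series \eqref{th1}, reduce to the termwise study of $\int_0^1 e^{-\pi\lambda^{-1}(1-y^2)(n+\tfrac12)^2}\,\dy$, and close with the identity $\sum_{n\in\Z}(n+\tfrac12)^{-2}=\pi^2$---but the execution differs. The paper sandwiches each $H_{n+1/2}(\lambda)$ between explicit upper and lower bounds (the upper one obtained by splitting the $y$-integral at a point $a$ chosen so that $1-a^2=\lambda^{1/2}$), deduces $\lim_{\lambda\to0}H_{n+1/2}(\lambda)=\tfrac{1}{2\pi^2(n+1/2)^2}$, and then passes the limit through the sum by dominated convergence using the uniform bound $H_t(\lambda)\le(\pi t)^{-2}$. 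Your route is more direct: the substitution $u=1-y^2$ together with $(2\sqrt{1-u})^{-1}=\tfrac12+O(u)$ on $[0,3/4]$ converts the problem into elementary Laplace-type integrals and yields the quantitative statement $H(\lambda)=\tfrac12+O(\lambda)$, stronger than the bare limit the paper proves. The trade-off is that the paper's dominated-convergence packaging makes the summation step automatic, whereas in your approach you must (as you note) verify that the exponential tails and the $O(\lambda^2/(n+\tfrac12)^4)$ remainders stay summable in $n$; this is routine but should be stated.
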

\begin{proof} 
By the transformation formula \eqref{th1-series} we see that
\begin{equation*}
 \left(\frac{\lambda}{1-y^2}\right)^{-1/2} \theta_1\left(0, i \lambda ^{-1}\big(1-y^2\big) \right) =  \sum_{n=-\infty}^\infty (-1)^n G_{\lambda(1-y^2)^{-1}}(n)
\end{equation*}
and, since the Gaussian is radially decreasing, this implies
\begin{equation*}%\label{asymp-2}
 \left( 1- 2 e^{-\pi \lambda (1-y^2)^{-1}}\right) \leq \left(\frac{\lambda}{1-y^2}\right)^{-1/2} \theta_1\left(0, i \lambda ^{-1}\big(1-y^2\big) \right) \leq 1
 \end{equation*}
for all $\lambda >0$ and $y \in (0,1)$. We arrive at 
\begin{align}\label{asymp-3}
\begin{split}
 \frac{1}{\pi} \int_0^1 \frac{1}{(1-y^2)^{1/2}} \left( 1- 2 e^{-\pi \lambda (1-y^2)^{-1}}\right)  \dy  \leq \lambda^{1/2} H(\lambda) \leq \frac{1}{\pi} \int_0^1 \frac{1}{(1-y^2)^{1/2}}\,\dy.
\end{split}
\end{align}
Using dominated convergence as $\lambda \to \infty$ and a direct evaluation, it follows that both integrals in \eqref{asymp-3} converge to the value $1/2$, which finishes the proof of \eqref{asymp-lemma-1}.

\medskip

The proof of \eqref{asymp-lemma-2} is slightly more involved. We define for each $t \in \R$ the function 
\begin{equation*}
H_t(\lambda)= \frac{1}{\pi \lambda} \int_0^1 e^{-\pi \lambda^{-1} t^2 (1-y^2)}\,\dy\,,
\end{equation*}
and note that for each $\lambda >0$ we have
\begin{equation*}
H(\lambda) = \sum_{n=-\infty}^{\infty} H_{n+\frac{1}{2}}(\lambda).
\end{equation*}
For each $t\neq 0$ we have (using $(1-y) \leq (1-y^2) \leq 2(1-y)$ for $0\le y\le 1$)
\begin{align}\label{asymp-dc}
\begin{split}
H_t(\lambda) \leq  \frac{1}{\pi \lambda} \int_0^1 e^{-\pi \lambda^{-1} t^2 (1-y)}\,\dy &=  \frac{1}{\pi \lambda} \int_0^1 e^{-\pi \lambda^{-1} t^2 w}\,\dw\\
& \leq \frac{1}{\pi \lambda} \int_0^{\infty} e^{-\pi \lambda^{-1} t^2 w}\,\dw = \frac{1}{\pi^2 t^2}\,,
\end{split}
\end{align}
and 
\begin{align}\label{asymp-lb}
H_t(\lambda) \geq  \frac{1}{\pi \lambda} \int_0^1 e^{-2\pi \lambda^{-1} t^2 (1-y)}\,\dy &= \frac{1}{2 \pi ^2 t^2} \left( 1 - e^{-2 \pi \lambda^{-1} t^2} \right).
\end{align}
We are interested in evaluating the limit of $H_t(\lambda)$ as $\lambda \to 0$. For $t\neq 0$, let us split the integral in two parts 
\begin{equation*}
H_t(\lambda)= \frac{1}{\pi \lambda}\left\{ \int_0^a e^{-\pi \lambda^{-1} t^2 (1-y^2)}\,\dy +  \int_a^1 e^{-\pi \lambda^{-1} t^2 (1-y^2)}\,\dy \right\},
\end{equation*}
where $a$ is to be chosen later. In the first integral we use the fact that $(1-y^2) \geq (1-a^2)$, while in the second integral we use $(1-y^2) \geq (1+a)(1-y)$ to obtain the upper bound
\begin{align}\label{asymp-4}
\begin{split}
H_t(\lambda) & \leq \frac{1}{\pi \lambda}\left\{ \int_0^a e^{-\pi \lambda^{-1} t^2 (1-a^2)}\,\dy  +  \int_a^1 e^{-\pi \lambda^{-1} t^2 (1+a)(1-y)}\,\dy \right\}\\
& = \frac{1}{\pi \lambda}\left\{  a e^{-\pi \lambda^{-1} t^2 (1-a^2)}  + \frac{1}{\pi \lambda^{-1} t^2 (1+a)} \left( 1- e^{-\pi \lambda^{-1} t^2 (1+a)(1-a)} \right)\right\}\\
& = \frac{ a e^{-\pi \lambda^{-1} t^2 (1-a^2)} }{\pi \lambda} +  \frac{1}{\pi^2  t^2 (1+a)} \left( 1 - e^{-\pi   \lambda^{-1}t^2(1-a^2)}\right).
\end{split}
\end{align}
We now choose $1-a^2 = \lambda^{1/2}$ (recall that $\lambda$ in this case is small) and plug it back in \eqref{asymp-4} to get
\begin{equation}\label{asymp-ub}
H_t(\lambda) \leq \frac{\sqrt{1 - \sqrt{\lambda}}}{\pi\lambda} e^{-\pi  \lambda^{-1/2} t^2} + \frac{1}{\pi^2  t^2 \left(1+\sqrt{1 - \sqrt{\lambda}}\right)} \left( 1 - e^{-\pi  \lambda^{-1/2} t^2} \right).
\end{equation}
For fixed $t\neq 0$, as $\lambda \to 0$ we see from expressions \eqref{asymp-lb} and \eqref{asymp-ub} that 
\begin{equation*}
\lim_{\lambda \to 0} H_t(\lambda) = \frac{1}{2 \pi^2 t^2}.
\end{equation*}
Finally, expression \eqref{asymp-dc} allows us to use dominated convergence and conclude that
\begin{align*}
\lim_{\lambda \to 0} H(\lambda) =\lim_{\lambda \to 0}  \sum_{n=-\infty}^{\infty} H_{n+\frac{1}{2}}(\lambda) & = \sum_{n=-\infty}^{\infty} \lim_{\lambda \to 0}  H_{n+\frac{1}{2}}(\lambda)  \\
&  = \frac{1}{2 \pi^2} \sum_{n=-\infty}^{\infty} \frac{1}{\big(n+\tfrac{1}{2}\big)^2} = \frac{1}{2}\,,
\end{align*}
which finishes the proof of \eqref{asymp-lemma-2}.
\end{proof}

\section{Proof of Theorem \ref{thm-app}}\label{App}%%%%%%%%%%%%%%%%%%%%%%%%%%%%%%%%%%%%%%%%%%%%%%%%%%%%%%%%%%%%%%%%%%%%%%%%%%%%%
The strategy for integrating the free parameter in the case of two-sided approximations uses the Paley-Wiener theorem for distributions as in \cite[Sections 7 and 8]{CLV} or \cite[Theorems 1.7.5 and 1.7.7]{Ho}. We start with a more general situation (from which the truncated Gaussian is a particular case) where $\lambda$ is a parameter on an interval $I \subseteq \R$ and $x \mapsto G(\lambda, x)$ is a family of real-valued functions satisfying the following properties, for each $\lambda \in I$,

\medskip

\begin{itemize}
 \item[(i)] The function $x \mapsto G(\lambda, x)$ is continuous on $\R\backslash\{0\}$ and integrable on $\R$.
 
 \medskip

 \item[(ii)] There is a unique best approximation $z\mapsto K(\lambda,z)$ of exponential type $\pi$ that interpolates the values of $x\mapsto G(\lambda,x)$ at $\Z\backslash\{0\}$, and satisfies
\begin{equation*}
 \sin \pi x \,\{G(\lambda,x) - K(\lambda, x)\} \geq 0
\end{equation*}
for all $x \in \R$.
\end{itemize}
We will call $\{x\mapsto G(\lambda, x)\}_{\lambda\in I}$ a \textsl{best approximation family} if it satisfies properties (i) and (ii) above. We denote by $\mc{S}(\R)$ the space of Schwartz functions and by $\mc{S}'(\R)$ the dual space of tempered distributions. In this setting we have the following result.

\begin{lemma}\label{BADis}
Let $\{x \mapsto G(\lambda,x)\}_{\lambda\in I}$ be a best approximation family and $\nu$ be a non-negative Borel measure on $I$ satisfying 
\begin{equation*}
 \int_{I} \int_{-\infty}^{\infty} \left|G(\lambda, x) - K(\lambda, x)\right| \, \dx\,\dnu  < \infty.
\end{equation*}
Let $g: \R \to \R$ be a function on $\mc{S}'(\R)$ that is continuous on $\R\backslash\{0\}$, and such that 
\begin{equation*}
 \widehat{g}(\varphi) = \int_{-\infty}^{\infty}\left\{\int_{I} \widehat{G}(\lambda, t)\, \dnu \right\} \varphi (t) \, \dt
\end{equation*}
(in the tempered distribution sense) for all Schwartz functions $\varphi$ supported on $[-\h,\h]^c$. Then there exists a unique best approximation $k(z)$ of exponential type $\pi$ for $g(x)$. The function $k(x)$ interpolates the values of $g(x)$ at $\Z/\{0\}$ and satisfies 
\begin{equation*}
 \sin \pi x\,\{g(x) - k(x)\} \geq 0,
\end{equation*}
for all $x \in \R$, and
\begin{equation*}
\int_{-\infty}^{\infty} |g(x) -k(x)| \,\dx =  \int_{I} \int_{-\infty}^{\infty} \left|G(\lambda, x) - K(\lambda, x)\right| \, \dx\,\dnu.
\end{equation*}
\end{lemma}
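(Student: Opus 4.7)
The natural candidate is $k := g - h$ where $h(x) := \int_I\big(G(\lambda,x) - K(\lambda,x)\big)\,\dnu$; the integrability hypothesis together with Fubini places $h$ in $L^1(\R)$, so $k$ is a well-defined tempered distribution. I would organize the proof in four pieces: (A) $k$ is entire of exponential type $\pi$; (B) $k$ has the sign property and therefore interpolates $g$ on $\Z\backslash\{0\}$; (C) the value of $\int_\R|g-k|\,\dx$ matches the claim; (D) the sharp lower bound holds against any competitor, with uniqueness.

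\textbf{Entireness (A) and sign/interpolation (B).} Passing to Fourier transforms distributionally and using Fubini, $\widehat h(t) = \int_I\big(\widehat G(\lambda,t) - \widehat K(\lambda,t)\big)\,\dnu$ pointwise. Each $\widehat K(\lambda,\cdot)$ is supported in $[-\hh,\hh]$, so for any Schwartz $\varphi$ with $\supp\varphi\subseteq[-\hh,\hh]^c$ we have $\int \varphi(t)\,\widehat K(\lambda,t)\,\dt=0$, and combining this with the hypothesized identity for $\widehat g$ forces $\widehat k(\varphi)=0$. Hence $\supp\widehat k\subseteq[-\hh,\hh]$, and the Paley-Wiener theorem for tempered distributions identifies $k$ with an entire function of exponential type $\pi$. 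Since $k$ is entire (hence continuous on $\R$) and $g$ is continuous on $\R\backslash\{0\}$, the equality $g-k=h$ holds pointwise on $\R\backslash\{0\}$; integrating the pointwise inequality $\sin\pi x\,(G(\lambda,x)-K(\lambda,x))\ge 0$ against $\dnu$ yields $\sin\pi x\,(g(x)-k(x))\ge 0$ for all $x\in\R$ (trivially at $x=0$). At each $n\in\Z\backslash\{0\}$, $\sin\pi x$ changes sign, so continuity of $g-k$ there forces $g(n)-k(n)=0$.

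\textbf{Minimum value (C) and lower bound (D).} The sign property together with Fubini (absolute integrability is exactly the hypothesis) gives
$$\int_{-\infty}^\infty |g-k|\,\dx = \int_{-\infty}^\infty\sgn(\sin\pi x)\,h(x)\,\dx = \int_I\int_{-\infty}^\infty|G(\lambda,x)-K(\lambda,x)|\,\dx\,\dnu.$$
For any entire $\widetilde K$ of exponential type $\pi$ with $g-\widetilde K\in L^1(\R)$, expanding $\sgn(\sin\pi x)$ by its uniformly bounded Fourier partial sums exactly as in the proof of Theorem \ref{truncated-ba-theorem} and applying dominated convergence yields
$$\int_{-\infty}^\infty|g-\widetilde K|\,\dx \;\ge\; \bigg|\int_{-\infty}^\infty\sgn(\sin\pi x)(g-\widetilde K)\,\dx\bigg| = \frac{1}{\pi}\bigg|\sum_{n\in\Z}\frac{\widehat{g-\widetilde K}(n+\hh)}{n+\hh}\bigg|.$$
Since $g-\widetilde K\in L^1$ its Fourier transform is continuous; the support of $\widehat{\widetilde K}$ in $[-\hh,\hh]$ together with continuous passage through $\pm\hh$ kills the $\widetilde K$-contribution, and the distributional hypothesis (extended to the boundary by continuity of $t\mapsto\int_I\widehat G(\lambda,t)\,\dnu$) yields $\widehat{g-\widetilde K}(n+\hh)=\int_I\widehat G(\lambda,n+\hh)\,\dnu$. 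Interchanging sum and integral (justified by the integrability hypothesis and Theorem \ref{truncated-ba-theorem} applied to each $G(\lambda,\cdot)$), the right-hand side equals the minimum value computed above, proving the lower bound. Equality forces $\sgn(\sin\pi x)(g-\widetilde K)=|g-\widetilde K|$ a.e.\ and, by continuity on $\R\backslash\{0\}$, $\widetilde K(n)=g(n)=k(n)$ at every $n\in\Z\backslash\{0\}$; thus $\widetilde K-k$ has exponential type $\pi$, lies in $L^1(\R)$, and vanishes on $\Z\backslash\{0\}$, so by the classical interpolation theorem \cite[Vol. II, p. 275]{Z} it is a scalar multiple of $z\mapsto\sin\pi z/(\pi z)$, and non-integrability of that function forces $\widetilde K=k$.

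\textbf{Main obstacle.} The delicate part is the Fourier analysis at the boundary frequencies $\pm\hh$: one needs $\widehat{\widetilde K}(\pm\hh)=0$ (from continuity of the $L^1$-Fourier transform together with $\supp\widehat{\widetilde K}\subseteq[-\hh,\hh]$) and the distributional identity for $\widehat g$ must propagate continuously through $\pm\hh$. Once these boundary issues are resolved, the remainder reduces to Paley-Wiener for distributions, Fubini, and the sign-change argument already employed in the proof of Theorem \ref{truncated-ba-theorem}.
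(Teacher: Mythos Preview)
Your strategy matches the paper's (which simply defers to \cite[Theorem 16]{CLV}): set $h(x)=\int_I(G(\lambda,x)-K(\lambda,x))\,\dnu\in L^1(\R)$, define $k=g-h$, use Paley--Wiener for distributions to see $\supp\widehat k\subseteq[-\hh,\hh]$, inherit the sign property by integration in $\lambda$, and run the $\sgn(\sin\pi x)$ Fourier-series trick for the lower bound and uniqueness.

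There is one genuine repair needed in (D). You invoke continuity of $t\mapsto\int_I\widehat G(\lambda,t)\,\dnu$ to pass through $t=\pm\hh$, but nothing in the hypotheses gives even convergence of that integral; and your sum/integral interchange fails an absolute-convergence check since $\sum_n|n+\hh|^{-1}=\infty$ while the only available bound is $|\widehat G(\lambda,n+\hh)|\le\|G(\lambda,\cdot)\|_{L^1}$. Both problems disappear if you route through $h$ instead of $g$: since $g-\widetilde K$ and $h=g-k$ are in $L^1$, so is $k-\widetilde K$, and its continuous Fourier transform (supported in $[-\hh,\hh]$ by Paley--Wiener) vanishes at every $n+\hh$. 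Hence
\[
\widehat{g-\widetilde K}(n+\hh)=\widehat h(n+\hh)+\widehat{k-\widetilde K}(n+\hh)=\widehat h(n+\hh),
\]
and the same dominated-convergence computation you used for $g-\widetilde K$ now gives
\[
\frac{i}{\pi}\sum_{n\in\Z}\frac{\widehat h(n+\hh)}{n+\hh}=\int_{-\infty}^\infty\sgn(\sin\pi x)\,h(x)\,\dx=\int_{-\infty}^\infty|h(x)|\,\dx,
\]
the last equality by the sign property of $h$. Fubini on the right-hand side yields the claimed minimum value with no interchange of sum and $\dnu$-integral required.
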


\begin{proof}
The argument is a modification of the proof of \cite[Theorem 16]{CLV}.
\end{proof}

Most of the work towards the proof of Theorem \ref{thm-app} is done. All that remains is to check that the hypotheses of Lemma \ref{BADis} are satisfied in the case of the truncated Gaussian $G_{\lambda}^{+}(x)  = \sgp e^{-\pi \lambda x^2}$. First observe that, by Lemma \ref{asymp-lemma}, for a non-negative Borel measure $\nu$ on $[0,\infty)$ the two conditions
\begin{equation}\label{cond-1}
\int_0^{\infty} \frac{1}{1 + \sqrt{\lambda}}\, \dnu <\infty
\end{equation}
and
\begin{equation*}
 \int_{I} \int_{-\infty}^{\infty} \left|G_{\lambda}^{+}(x) - K_{\lambda}^{+}(x)\right| \, \dx\,\dnu  < \infty
\end{equation*}
are equivalent. It remains to show that the Fourier transform of 
\begin{equation*}
g(x) = \int_0^{\infty}  G_{\lambda}^{+}(x)\, \dnu
\end{equation*}
is given by
\begin{equation*}
\widehat{g}(t) = \int_0^{\infty}  \widehat{G}_{\lambda}^{+}(t)\, \dnu
\end{equation*}
outside a compact $[-\delta, \delta]$ in the tempered distribution sense. For $\delta >0$, let $\varphi:\R \to \R$ be a Schwartz function with support on $[-\delta, \delta]^c$. Using   \eqref{cond-1} we obtain
\begin{align}\label{pre-Fubini}
\begin{split}
\int_{0}^{\infty} \int_{-\infty}^{\infty} &e^{-\pi \lambda x^2} \,\big|\widehat{\varphi}(x)\big| \,\dx \,\dnu  \leq  \int_{0}^{1} \int_{-\infty}^{\infty} \big|\widehat{\varphi}(x)\big| \,\dx\, \dnu \\
& \ \ \ \ \ \ \ \ \ \ \ \ \ \ \ \ \ \ \ \ \ \ \ \ \ \ \ \ \ \ \ + \sup_{x\in \R} \big|\widehat{\varphi}(x)\big| \int_{1}^{\infty} \int_{-\infty}^{\infty} e^{-\pi \lambda x^2}\,\dx \,\dnu \\
&= \int_{-\infty}^{\infty} \big|\widehat{\varphi}(x)\big| \,\dx\, \int_{0}^{1} \dnu + \sup_{x\in \R} \big|\widehat{\varphi}(x)\big| \int_{1}^{\infty} \lambda^{-\frac12}\dnu < \infty.
\end{split}
\end{align}
Also, recall from \eqref{FT-Gaussian-3} that 
\begin{equation}\label{II-FT-Gaussian-3}
\widehat{G}_{\lambda}^{+}(t) = \frac{1}{2} \, \lambda^{-1/2} e^{-\pi \lambda^{-1} t^2} + \frac{t}{i \lambda}  \int_0^1e^{-\pi \lambda^{-1} t^2 (1-y^2)}\,\dy.
\end{equation}
From \eqref{asymp-dc} and  \eqref{II-FT-Gaussian-3} we know that 
\begin{equation} \label{pr-th4-2}
 \big|\widehat{G}_{\lambda}^{+}(t)\big| \leq  \frac{C_3}{ |t|}\,,
\end{equation}
for some $C_3>0$, and directly from \eqref{II-FT-Gaussian-3} we also see that
\begin{equation}\label{pr-th4-3}
 \big|\widehat{G}_{\lambda}^{+}(t)\big|  \leq \frac{1}{2\sqrt{\lambda}}+  \frac{|t|}{\lambda}.
\end{equation}
Expressions \eqref{pr-th4-2} and \eqref{pr-th4-3} combine to give 
\begin{align*}
\int_0^{\infty} \big|\widehat{G}_{\lambda}^{+}(t)\big| \, \dnu  &= \int_0^{1} \big|\widehat{G}_{\lambda}^{+}(t)\big|\,  \dnu + \int_1^{\infty} \big|\widehat{G}_{\lambda}^{+}(t)\big|\,  \dnu \\
& \leq \frac{C_3}{|t|} \int_0^{1}  \dnu +  \int_1^{\infty} \left\{\frac{1}{2\sqrt{\lambda}}+  \frac{|t|}{\lambda}\right\} \, \dnu \leq \frac{C_4}{|t|} + C_5 + C_6|t|\,,
\end{align*}
where the constants $C_4$, $C_5$ and $C_6$ depend only on $\nu$. This verifies that (recall that $\varphi$ vanishes near the origin)
\begin{equation}\label{pre-Fubini-2}
 \int_{-\infty}^{\infty} \int_{0}^{\infty}  \big|\widehat{G}_{\lambda}^{+}(t)\big|\, |\varphi(t)|\,\dnu\, \dt \leq  \int_{-\infty}^{\infty} \left(\frac{C_4}{|t|} + C_5 + C_6|t|\right) \, |\varphi(t)|\, \dt < \infty.
\end{equation}
Plainly, expressions  \eqref{pre-Fubini} and \eqref{pre-Fubini-2} allow us to apply Fubini's theorem twice in the computation below
\begin{align*}
\int_{-\infty}^{\infty} g(x)\, \widehat{\varphi}(x)\, \dx & =    \int_{-\infty}^{\infty} \int_0^{\infty}  G_{\lambda}^{+}(x)\, \widehat{\varphi}(x)\,\dnu\, \dx\\
& =    \int_{0}^{\infty} \int_{-\infty}^{\infty}  G_{\lambda}^{+}(x)\, \widehat{\varphi}(x)\,\dx\, \dnu \\
&  =  \int_{0}^{\infty} \int_{-\infty}^{\infty}  \widehat{G}_{\lambda}^{+}(t)\, \varphi(t)\,\dt\, \dnu \\
&= \int_{-\infty}^{\infty} \left\{ \int_{0}^{\infty}  \widehat{G}_{\lambda}^{+}(t)\, \dnu \right\}\varphi(t)\,\dt\,,
\end{align*}
which gives the required characterization of the Fourier transform $\widehat{g}(t)$ outside the origin in the distribution sense and completes the proof of the Theorem \ref{thm-app}.

\section{Proof of Theorem \ref{thm-app-one-sided}}%%%%%%%%%%%%%%%%%%%%%%%%%%%%%%%%%%%%%%%%%%%%%%%%%%%%%%%%%%%%%%%%%%%%%%%%%%%%%%%%%%%%%%%%%%%%%%%%%%%%%%%%%%%%%%%%%%%%%%%%%%%%%%%%

In the one-sided case a more straightforward approach of moving the integral inside the summation series and guaranteeing its absolute convergence will do the job. We start with part (i), the minorant case, where we proved that 
\begin{equation*}
\lplus(z) = \frac{\sin^2\pi z}{\pi^2} \sum_{n=1}^\infty \left\{\frac{G_\lambda(n)}{(z-n)^2} + \frac{G_\lambda'(n)}{z-n}\right\} -\frac{\sin^2\pi z}{\pi^2 z} \sum_{n=1}^\infty G_\lambda'(n)
\end{equation*}
satisfies
\begin{equation}\label{pf4-1}
\lplus(x) \leq \gplus(x)
\end{equation}
for all $x \in \R$, with 
\begin{equation}\label{pf4-2}
\lplus(n) = \gplus(n)
\end{equation}
if $n \in \Z/\{0\}$, and 
\begin{equation}\label{pf4-3}
\lplus(0)  = \lim_{x \to 0^{-}}\gplus(x) = 0.
\end{equation}
We consider a non-negative Borel measure $\nu$ satisfying \eqref{nu-1} and we need to show that 
$$l(z) = \int_0^{\infty} \lplus(z)\, \dnu$$
is a well defined entire function of exponential type at most $2\pi$. If this is the case, by integrating expressions \eqref{pf4-1}, \eqref{pf4-2} and \eqref{pf4-3} against $\nu$, these properties will be carried on to $l(x)$ and $g(x) =  \int_0^{\infty} \gplus(x)\, \dnu$ making $l(x)$ the unique extremal minorant of exponential type at most $2\pi$ for $g(x)$ via the same arguments used in the proof of Theorem \ref{thm-one-sided}.

\medskip

For this purpose we need to collect some estimates. For $n \in \N$ using \eqref{nu-1} we have
\begin{equation}\label{pf4-est1}
\int_0^{\infty} G_\lambda(n)\,\dnu = \int_0^{1} G_\lambda(n)\,\dnu  + \int_1^{\infty} \sqrt{\lambda}\,G_\lambda(n)\,\frac{\dnu}{\sqrt{\lambda}} \leq C_1 + \frac{C_2}{n},
\end{equation}
and
\begin{align}\label{pf4-est2}
\begin{split}
\int_0^{\infty} \big|G'_\lambda(n)\big|\,\dnu &= 2 \pi \int_0^{1} \lambda \,n\, G_\lambda(n)\,\dnu  + 2 \pi \int_1^{\infty} \lambda^{3/2}\,n \,G_\lambda(n)\,\frac{\dnu}{\sqrt{\lambda}} \\
& \leq \frac{C_3}{n} + \frac{C_4}{n^2},
\end{split}
\end{align}
where $C_1, C_2, C_3$ and $C_4$ are positive constants depending exclusively on $\nu$.

\medskip

To analyze the remaining term observe that
\begin{align*}
\lambda ^{1/2} \sum_{n=1}^{\infty} \big|G'_\lambda(n)\big| & = \sum_{n=1}^{\infty}\frac{2 \pi}{n^2} \,\lambda^{3/2}\,n^3\, G_\lambda(n) \leq C_5 \sum_{n=1}^{\infty}\frac{2 \pi}{n^2},
\end{align*}
which proves that $\sum_{n=1}^{\infty} \big|G'_\lambda(n)\big| $ is $\mathcal{O}\big(\lambda^{-1/2}\big)$ as $ \lambda \to \infty$. On the other hand, using the arithmetic-geometric mean inequality and \eqref{ineq-20}, we also obtain
\begin{align*}
\begin{split}
\sum_{n=1}^{\infty} \big|G'_\lambda(n)\big| & = \sum_{n=1}^{\infty} 2 \pi \lambda\,n\, G_\lambda(n) \leq \sum_{n=1}^{\infty}  \pi\, \big\{ \lambda^{3/2}\,n^2 + \lambda^{1/2}\big\}\, G_\lambda(n)\\
& \leq \frac{\lambda^{1/2}}{4} +\left(\tfrac12 + \pi\right)\, \lambda^{1/2} \, \sum_{n=1}^{\infty} G_\lambda(n)\\
& =  \frac{\lambda^{1/2}}{4} +\left(\tfrac12 + \pi\right)\, \lambda^{1/2} \left(\frac{\theta_3(0,i\lambda) -1}{2} \right).
\end{split}
\end{align*}
We know $\theta_3(0,i\lambda) \to \lambda^{-1/2}$ as $\lambda \to 0$, by the transformation formula \eqref{th3-series}. Therefore we may conclude that $\sum_{n=1}^{\infty} \big|G'_\lambda(n)\big| $ is $\mathcal{O}(1)$ as $ \lambda \to 0$. 

\medskip

This shows that  $\sum_{n=1}^{\infty} \big|G'_\lambda(n)\big| $ is $\nu$-integrable, and together with \eqref{pf4-est1} and \eqref{pf4-est2} we can can move the integration inside the summation series since it converges absolutely to obtain
\begin{align*}
\begin{split}
l(z) &= \int_0^{\infty} \lplus(z)\, \dnu \\
& =  \frac{\sin^2\pi z}{\pi^2} \sum_{n=1}^\infty \left\{\frac{\int_0^{\infty}G_\lambda(n)\,\dnu}{(z-n)^2} + \frac{\int_0^{\infty}G_\lambda'(n)\,\dnu}{z-n}\right\}\\
&  \ \ \ \ \ \ \ \ \ \ \ \ \ \ \ \ \ \ \ \ \ \ \ \ \  -\frac{\sin^2\pi z}{\pi^2 z} \int_0^{\infty} \sum_{n=1}^\infty G_\lambda'(n)\,\dnu.
\end{split}
\end{align*}
An application of Morera's theorem shows that this is an entire function and the exponential type $2\pi$ is given by the main term $\sin^2\pi z$. The proof of the majorizing case is analogous.

\section*{Acknowledgments}
The authors are thankful to Marian Bocea for helpful discussions regarding the maximum principle of the heat operator and to Jeffrey D. Vaaler for the discussions on the extremal problem. E. Carneiro acknowledges support from the Institute for Advanced Study via the National Science Foundation agreement No. DMS-0635607 and support from the CNPq-Brazil grants $473152/2011-8$  and $302809/2011-2$.

\end{document}